\documentclass{amsart}

\usepackage{amsmath}
\usepackage{amsfonts}
\usepackage{amssymb}
\usepackage{amsthm}
\usepackage{graphics}
\usepackage [dvips]{epsfig}
\usepackage{amssymb,amsfonts,amstext,amsmath,graphicx,epic,amsthm,color,times}
\usepackage{tikz-cd}
\usepackage{graphicx}
\usepackage{t1enc}

\def \dim{\mathop{\rm dim}\nolimits}

\newtheorem{theorem}{Theorem}[section]
\newtheorem{lemma}[theorem]{Lemma}

\newtheorem{proposition}[theorem]{Proposition}
\theoremstyle{definition}
\newtheorem{definition}[theorem]{Definition}
\newtheorem{example}[theorem]{Example}

\theoremstyle{remark}
\newtheorem{remark}[theorem]{Remark}

\numberwithin{equation}{section}

\usepackage[all]{xy}
\setcounter{tocdepth}{1}

\title[ Cosymplectic Lagrangian-like submanifolds]{ Cosymplectic Lagrangian-like submanifolds}
\author{S. Tchuiaga }
\address{Department of Mathematics of the University of Buea, 
	South West Region, Cameroon}
\email{tchuiagas@gmail.com}

\author{F. BALIBUNO}
\address{Department of Mathematics and Computer Science, Faculty of Sciences and Technologies\\
	University of Kinshasa, Kinshasa, D.R.Congo}
\email{fidele.balibuno@unikin.ac.cd}
\author{ E. Djoukeng }
\address{Department of Mathematics of the University of Buea, 
	South West Region, Cameroon}
\email{fiss300@gmail.com}

\begin{document}
	
	\renewcommand\contentsname{Table of contents}
	\renewcommand\refname{References}
	\renewcommand\abstractname{Abstract}
	\pagestyle{myheadings}
	\markboth{Stephane}{Cosymplectic}

	\maketitle
	
	\begin{abstract}
	 
		This paper highlights the similarities between even-dimensional geometry (symplectic) and odd-dimensional geometry (cosymplectic). We study the Lagrangian Grassmannian in the cosymplectic setting. The space of compatible co-complex structures is introduced and analyzed. A study of Moser's trick and Lagrangian neighborhood theorems in the cosymplectic context follows. The corresponding Weinstein $1-$form is derived, and its de Rham class is a co-flux.
	\end{abstract}
	
	\
	
	\noindent {\bf AMS Classification:} {53C24, 53C15, 53D05,  57R17}
	\\
	{\bf Keywords:} { Cosymplectic manifolds, co-Flux homomorphism, Weinstein chart's, Lagrangian-like submanifolds.} \vspace{1cm} \maketitle

\section{Introduction}\label{sec1}
Co-symplectic structures are found in certain differentiable manifolds of odd dimensions, sharing strong links with symplectic structures in even-dimensional manifolds. The original definition of a cosymplectic manifold is attributed to Paulette Libermann (1958) \cite{P-L}. She was inspired by the work of A. Lichnerowicz and the thesis of F. Gallisot on exterior forms in mechanics, specifically in co-Hamiltonian mechanics. Co-symplectic structures naturally arise in the modeling of time-dependent Hamiltonian systems. More formally, a co-symplectic structure on a differentiable manifold $M$ of dimension $(2n +1)$ is a couple $(\eta, \omega)$ where
$\eta$ is a closed $1-$form, $\omega$ is a closed $2-$form so that $\eta\wedge \omega^n\neq 0$. This is a special case of stable 
Hamiltonian structure. The closed $2-$form associated with the  cosymplectic structure  is a symplectic form  outside of its singularities. A fundamental example of a manifold carrying a cosymplectic structure is the torus mapping of a symplectic diffeomorphism, or in other words, the symplectic fibration above the circle. In this example, the standard $1-$form $d\theta$ on the  circle induces $\eta$, and the 
symplectic form on the fiber induces a $2-$form $\omega$.  Together, they give a cosymplectic  structure $(\eta, \omega)$ 
on  mapping torus. 
This class of examples is very important. A theorem  by  Li states that : a closed cosymplectic manifold is a symplectic mapping torus (vise versa).  Furthermore, for any cosymplectic manifold
$M$, the manifold $M \times \mathbb{S}^1$ carries an induced symplectic structure, called symplectization. This illustrates the strong potential interactions between symplectic structures and cosymplectic structures.\\
Knowing that the locus of Lagrangian submanifolds (Grassmannian) from symplectic geometry is a key concept in modern geometry and physics, one can ask: what could be the properties of the cosymplectic analogue of such a concept?
In the present paper, we shall first define and study the cosymplectic analogue of Lagrangian vector subspaces in the usual sense (called Lagrangian-like subspaces). The rest of the paper will investigate how everything in the cosymplectic context can be viewed as a Lagrangian-like submanifold. A notion of cosymplectic analogues of the usual Lagrangian subspaces exists in the literature (see \cite{G-Z}, whose approach is via Poisson manifolds). Here, we state the definition of a Lagrangian-like subspace in the cosymplectic context using a different approach.\\
 Organization of the paper. 
Section \ref{sec2}  recalls basic notions on cosymplectic vector space and their subspaces : Proposition \ref{Prop-2}, Proposition \ref{thm000},and Proposition \ref{Prop-3}. This is followed by the description of co-complex structures: Lemma \ref{contract-1},  and Lemma \ref{contract-2}. In section \ref{sec3}, we study and characterize various Lagrangian-like submanifolds of a given cosymplectic manifold. The Lagrangian-like neighborhood results are studied: Proposition \ref{thm00}, 
Lemma \ref{lem1}, Theorem \ref{thm01} (a relative cosymplectic Moser trick), and Theorem \ref{Wein-00} (a Weinstein-like tubular neighborhood result). Section \ref{sec4} deals with the study of the cosymplectic analogue of the Wienstein chart, and its consequences: Proposition \ref{fix-0} (this result is about the fixed points of cosymplectomorphisms), and  Theorem \ref{Flux-W}  links the Weinstein-like chart to the co-flux epimorphism. 

\section{Preliminaries}\label{sec2}
\subsection{Cosymplectic vector space}

	Given any non-trivial linear map $\psi:V\longrightarrow \mathbb{R}$, together with a bilinear map $b : V\times V \longrightarrow \mathbb{R}$, one defines a linear map
$\widetilde{I}_{\psi, b}:  V   \longrightarrow V^* , 
v\longmapsto \widetilde{I}_{\psi, b}(v):=\imath_v b + \psi(v) \psi$, 
so that $\widetilde{I}_{\psi, b}(v)(u) =b(v,u) + \psi(v)\psi(u)$, for all $u, v \in V$.
For simplicity, we can write
$ \widetilde{I}_{\psi, b} : = b + \psi\otimes\psi,$
where $ \psi\otimes\psi(u, v)= \psi(u)\psi(v)$, for all vector fields $u, v$.
\begin{definition}\cite{S-C-M}
	\begin{enumerate}
		\item A pair $(b, \psi)$ consisting of an  antisymmetric bilinear map $b : V\times V \longrightarrow \mathbb{R}$ and a non-trivial linear map $\psi:V\longrightarrow \mathbb{R}$ is called cosymplectic structure if the map  $ \tilde{I}_{\psi, b}$ is a bijection.
		\item A cosymplectic vector space is a triple $(V, b, \psi)$ where  $ V$ is a vector space and $(b, \psi)$ is a
		cosymplectic   structure on $V$.
	\end{enumerate}
\end{definition}

\begin{example}\label{Standard}
	Let $V =  \mathbb R^{2l + 1}$ with  basis $\{e_1, \dots, e_l, f_1, \dots, f_l, \xi\} $. The biliniear form $b$ such that 
	$b_0(e_i, e_j ) = 0$, $b_0(f_i, f_j ) = 0$, $b_0(e_i, f_j ) = \delta_{i,j}$, and $b_0(\xi, v) = 0$ for all $v\in V$.  The linear form $\psi_0$ is defined such that such that $\psi_0(\xi) = 1$ and $\ker \psi = span \{e_1, \dots, e_l, f_1, \dots, f_l\}$. These induce a cosymplectic structure on $V$, making $(V, b_0, \psi_0)$	a cosymplectic vector space, called the standard cosymplectic space.
\end{example}

\begin{example}\label{Standard-1}
	Cosymplectic structure on the Weil bundle $(\mathbb R^{2n + 1})^\mathbf A$, with $\dim \mathbf A = l = odd$, where $\mathbf A$ is a Weild algebra.\\
	Let $\mathbb R^{2n + 1}$ be equipped with the coordinates system $(x_1, \dots, x_n, y_1, \dots, y_n, z)$. The cosymplectic structure on   $\mathbb R^{2n + 1}$  is induced by the $2-$form $\omega_0 :=  \sum_{i = 1}^n  dx_i\wedge dy_i$, and $1-$form $\eta_0 : = dz$. Therefore, in the coordinate system\\
	$(x_{1,1},\dots x_{n,1}, x_{1,2},\dots x_{n,2}, \dots,  x_{1,l},\dots x_{n,l},  y_{1,1},\dots y_{n,1}, y_{1,2},\dots y_{n,2}, \dots,  y_{1,l},\dots y_{n,l}, z_{1},\dots z_l  )$\\
	$ =:((x_{i,j}),  (y_{i,j}), z_{1},\dots z_l  )$,
	the 
	cosymplectic structure on  $(\mathbb R^{2n + 1})^\mathbf A$ is given by: 
	$ \omega^\mathbf A =  (\tilde\pi_{\mathbb R^{2n + 1}})^\ast(\omega_0 ) =  \sum_{j = 1}^l \left( \sum_{i = 1}^n dx_{i,j} \wedge dy_{i,j}\right),
	$ and
	$ \eta^\mathbf A =  (\tilde\pi_{\mathbb R^{2n + 1}})^\ast(\eta_0) = \sum_{j = 1}^l d z_j.
	$
	The Reeb vector field corresponds to $ \xi_{(\mathbb R^{2n + 1})^\mathbf A} = \frac{1}{l}\sum_{j = 1}^l\frac{\partial}{ \partial z_j} $. 
	Since $  (\mathbb R^{2n})^\mathbf A$ is a symplectic manifold, and $\dim \mathbf A = l = odd$ is greater than $1$, then $(\mathbb R^{2n + 1})^\mathbf A$ is not a suspension of $ (\mathbb R^{2n})^\mathbf A$ \cite{Tchuiaga4}. 
	
\end{example}
\subsection{Cosymplectic orthogonality}
\begin{definition}\cite{S-C-M}
	Let $(V, b, \psi)$ be a real cosymplectic vector space. The cosymplectic orthogonal (or cosymplectic annihilator) of $F\subset V$ is the linear subspace $F^{b, \psi}:=
	\{x \in V :  \widetilde{I}_{\psi, b}(x)(y)= 0, \forall y \in F \}$.
\end{definition}

\begin{proposition}\cite{S-C-M}\label{Ortho-1}
	Let $(V, b, \psi)$ be a cosymplectic vector space with Reeb vector $\xi$, and $F$ be a proper subspace of $V$ different from $\langle \xi \rangle$. Then, $V \neq F\oplus F^{b, \psi}$. 
\end{proposition}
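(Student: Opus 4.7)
The plan is to exhibit a nonzero element of $F\cap F^{b,\psi}$. Since $\widetilde{I}_{\psi,b}\colon V\to V^*$ is a bijection, viewing $F^{b,\psi}$ as the preimage of the annihilator $F^\circ\subset V^*$ yields $\dim F+\dim F^{b,\psi}=\dim V$; hence $V=F\oplus F^{b,\psi}$ is equivalent to $F\cap F^{b,\psi}=\{0\}$, and it suffices to produce a nonzero common element.

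The first observation is a \emph{self-test} identity: any $x\in F\cap F^{b,\psi}$ satisfies, upon taking $y=x$ in the defining relation $b(x,y)+\psi(x)\psi(y)=0$ and using $b(x,x)=0$, the equality $\psi(x)^2=0$, hence $\psi(x)=0$. Thus $F\cap F^{b,\psi}\subset H:=\ker\psi$, and $\omega_H:=b|_H$ is a genuine symplectic form on $H$: the decomposition $V=H\oplus\langle\xi\rangle$ together with $\imath_\xi b=0$ shows that any degeneracy of $\omega_H$ would contradict the bijectivity of $\widetilde{I}_{\psi,b}$. I would then split into cases according to how $F$ meets $H$ and $\langle\xi\rangle$. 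If $F\subset H$, the condition defining $F^{b,\psi}$ collapses, for $x\in H$, to $\omega_H(x,y)=0$ for all $y\in F$, so $F\cap F^{b,\psi}$ contains $F\cap F^{\omega_H}$. If $\xi\in F$, decomposing $F=(F\cap H)\oplus\langle\xi\rangle$ reduces the question to showing $(F\cap H)\cap(F\cap H)^{\omega_H}\neq\{0\}$. If $\xi\notin F$ but $F\not\subset H$, then $\psi|_F\neq 0$ and $F\cap H$ is a hyperplane in $F$; projecting $F$ onto $H$ along $\xi$ brings the computation back to symplectic orthogonality in $(H,\omega_H)$.

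The main obstacle is that the dimension count is already tight, so nothing follows from dimensions alone and the entire force of the argument must be to produce a genuine witness in $F\cap F^{b,\psi}$. This witness is built from the interaction between the symplectic form $\omega_H$ on $H$ and the Reeb line $\langle\xi\rangle$, and the hypothesis $F\neq\langle\xi\rangle$ plays the role of the exact cue needed to rule out the one configuration in which the symplectic orthogonality inside $H$ could be exploited to split $V$. Formalising this dichotomy in each of the three cases above, so that the required isotropy really forces a nonzero vector in the intersection, is the step I expect to be the most delicate.
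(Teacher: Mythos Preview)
There is a genuine gap, and it is not in your argument but in the statement itself: as written, the proposition is false, so your programme of exhibiting a nonzero vector in $F\cap F^{b,\psi}$ cannot be completed. (The paper does not supply a proof either; it merely cites \cite{S-C-M}.)

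A concrete counterexample is $F=H:=\ker\psi$. For any $y\in F\subset H$ one has $\psi(y)=0$, so $x\in F^{b,\psi}$ if and only if $b(x,y)=0$ for all $y\in H$; writing $x=h+c\xi$ with $h\in H$ this reads $\omega_H(h,y)=0$ for all $y\in H$, hence $h=0$ by nondegeneracy of $\omega_H:=b|_H$. Thus $F^{b,\psi}=\langle\xi\rangle$, $F\cap F^{b,\psi}=\{0\}$, and $V=H\oplus\langle\xi\rangle=F\oplus F^{b,\psi}$. More generally, your own Case~1 computation shows that for $F\subset H$ one has $F^{b,\psi}=F^{\omega_H}\oplus\langle\xi\rangle$ and hence $F\cap F^{b,\psi}=F\cap F^{\omega_H}$; whenever $F$ is a \emph{symplectic} subspace of $(H,\omega_H)$ (e.g.\ $F=\mathrm{span}\{e_1,f_1\}$ in the standard model with $n\ge 2$) this intersection is $\{0\}$ and $V=F\oplus F^{b,\psi}$. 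This is in fact consistent with Proposition~\ref{Prop-1}(6), which asserts that $V=F\oplus F^{b,\psi}$ precisely when $F$ is a cosymplectic subspace, and such subspaces other than $\langle\xi\rangle$ plainly exist.

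So the ``most delicate'' step you flag is not merely delicate: in Case~1 the inclusion $F\cap F^{\omega_H}\subset F\cap F^{b,\psi}$ is correct but useless once $F\cap F^{\omega_H}=\{0\}$, and no further trick will manufacture a witness. Either the hypothesis in the cited source is stronger than what is transcribed here (for instance, an isotropy or dimension restriction on $F$), or the proposition is simply misstated; in any case you should not try to prove it as it stands.
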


\begin{proposition}\cite{S-C-M}\label{Ortho-2}
	Let $(V, b, \psi)$ be a cosymplectic vector space, and $F$, $G$ be vector subspaces of $V$. Then, 
	\begin{itemize}
		\item $\dim F + \dim F^{b, \psi} = \dim V$.
		\item $(F^{b, \psi})^{b, \psi} = F$.
		\item If $G\subseteq F$, then $F^{b, \psi} \subseteq G^{b, \psi}$. 
		\item $(F + G)^{b, \psi} = F^{b, \psi}\cap G^{b, \psi}$.
		\item $F^{b, \psi} + G^{b, \psi} = (F\cap G)^{b, \psi}$. 
	\end{itemize}
\end{proposition}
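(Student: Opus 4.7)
My plan is to reduce every assertion to the single fact that $I:=\widetilde{I}_{\psi,b}\colon V\to V^{*}$ is a linear isomorphism, via the reformulation
\[ F^{b,\psi} \;=\; I^{-1}(\mathrm{Ann}\,F), \qquad \mathrm{Ann}\,F := \{\phi\in V^{*} : \phi|_{F}=0\}. \]
From this, (1) is immediate, since $I$ preserves dimension and $\dim\mathrm{Ann}\,F=\dim V-\dim F$. Next I would dispatch (3) by observing that $G\subseteq F$ forces $\mathrm{Ann}\,F\subseteq\mathrm{Ann}\,G$, hence $F^{b,\psi}\subseteq G^{b,\psi}$; and (4) by the tautology that $I(x)$ vanishes on $F+G$ if and only if it vanishes separately on $F$ and on $G$, which gives $(F+G)^{b,\psi}=F^{b,\psi}\cap G^{b,\psi}$.

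The hard part will be (2). By (1) both sides have dimension $\dim F$, so I only need to prove one inclusion, say $F\subseteq(F^{b,\psi})^{b,\psi}$, i.e.\ that $I(y)(x)=0$ whenever $y\in F$ and $x\in F^{b,\psi}$. The delicacy is that the bilinear form $B(x,y):=I(x)(y)=b(x,y)+\psi(x)\psi(y)$ is neither symmetric nor antisymmetric, and satisfies $B(x,y)-B(y,x)=2b(x,y)$, so vanishing in one argument does not automatically transfer to the other. My approach would be to exploit the canonical Reeb splitting $V=\langle\xi\rangle\oplus\ker\psi$, under which $b$ restricts to a genuine symplectic form on $\ker\psi$ and the symmetric contribution $\psi\otimes\psi$ is detected only on the Reeb line. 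Decomposing $F$ according to this splitting should reduce the swap $I(x)(y)=0\Rightarrow I(y)(x)=0$ to the standard symplectic double-orthogonal identity on $\ker\psi$, together with the trivial vanishing on $\langle\xi\rangle$. I expect this to be the main obstacle and the one place where the cosymplectic non-degeneracy enters in a nontrivial way.

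Finally (5) will follow formally from (2) and (4). Applying (4) to the pair $(F^{b,\psi},G^{b,\psi})$ and then (2) yields
\[ (F^{b,\psi}+G^{b,\psi})^{b,\psi} \;=\; (F^{b,\psi})^{b,\psi}\cap (G^{b,\psi})^{b,\psi} \;=\; F\cap G, \]
after which taking the cosymplectic orthogonal of both sides and invoking (2) one more time produces $F^{b,\psi}+G^{b,\psi}=(F\cap G)^{b,\psi}$, as desired.
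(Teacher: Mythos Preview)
The paper does not actually prove this proposition; it is quoted from \cite{S-C-M} without argument, so there is no proof in the paper to compare against. Your reductions for items (1), (3) and (4) via $F^{b,\psi}=I^{-1}(\mathrm{Ann}\,F)$ are correct and efficient.

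The genuine gap is in item (2): with the definitions given in this paper, the statement $(F^{b,\psi})^{b,\psi}=F$ is \emph{false} for general $F$, so your sketch cannot be completed. Take $V=\mathbb{R}^{3}$ with the standard basis $\{e_1,f_1,\xi\}$ of Example~\ref{Standard} and set $F=\langle e_1+\xi\rangle$. One computes $F^{b,\psi}=\langle e_1,\,f_1+\xi\rangle$ and then $(F^{b,\psi})^{b,\psi}=\langle e_1-\xi\rangle\neq F$. The obstruction is precisely the asymmetry you flagged: $B(x,y)+B(y,x)=2\psi(x)\psi(y)$, and when $F$ sits transversally to the Reeb splitting (here $F\cap\ker\psi=0$ and $F\cap\langle\xi\rangle=0$) this residue does not vanish. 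Your proposed decomposition of $F$ along $V=\ker\psi\oplus\langle\xi\rangle$ therefore fails exactly because $F$ need not equal $(F\cap\ker\psi)\oplus(F\cap\langle\xi\rangle)$. So the difficulty you anticipated is not a technical hurdle but a genuine counterexample; item (2) as stated requires an extra hypothesis on $F$ (e.g.\ $F\subseteq\ker\psi$, or $\xi\in F$, which does cover the isotropic and coisotropic cases used later in the paper). Finally, note that your derivation of (5) via (2) is unnecessarily exposed: (5) follows directly from (3), which gives the inclusion $F^{b,\psi}+G^{b,\psi}\subseteq(F\cap G)^{b,\psi}$, together with a dimension count using (1) and (4), so (5) survives independently of (2).
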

\begin{definition}\cite{S-C-M}
	Let $(V, b, \psi)$ be a real cosymplectic vector space with Reeb vector $\xi$,  and $F$ be a proper subspace of $V$.
	\begin{itemize}
		\item $F$ is cosymplectically coisotropic if $ F^{b, \psi} \subset F$.
		\item  $F$ is cosymplectic if $ \widetilde{I}_{\psi, b}$ restricted to $F$ is nondegenerate.
		\item  $F$ is cosymplectically isotropic if $ \widetilde{I}_{\psi, b}(x)(y) = 0$ for every $x, y \in F$.
		\item $ F$ is cosymplectically Lagrangian (or simply Lagrangian-like) if it is cosymplectically isotropic, does not contain the Reeb vector field, and $ F^{b, \psi} = \langle \xi \rangle\oplus F $.
	\end{itemize}
\end{definition}

\begin{proposition}\cite{S-C-M}\label{Prop-1}
	Let $(V, b, \psi)$ be a real cosymplectic vector space  with Reeb vector $\xi$.
	\begin{enumerate}
		\item  $F$ is cosymplectically isotropic iff  $F^{b, \psi} $ is cosymplectically co\"{\i}sotropic. 
		\item $F$ is cosymplectically  isotropic and $\dim F = \dim F^{b, \psi} - 1$ $ \Leftrightarrow $ $F$ is Lagrangian.
		\item $F$ is cosymplectically  isotropic  $ \Rightarrow $ $\xi \notin F$.
		\item $F$ is cosymplectically  isotropic  $ \Rightarrow $ $\dim  F\leq \frac{\dim V - 1}{2}$.
		\item $F$ is maximal and cosymplectically isotropic  $ \Leftrightarrow $ $F$ is Lagrangian.
		\item $F$ is cosymplectic iff $ V = F\oplus F^{b, \psi}$.
		\item  If $F$ is cosymplectically isotropic, then there  exists a Lagrangien subspace $L$ of $V$ containing $F$ and contained in  $F^{b, \psi}$ : 
		$F \subset L \subset F^{b, \psi} $. 
	\end{enumerate}
\end{proposition}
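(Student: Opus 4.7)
The plan is to prove the seven items not in the stated order but in a dependency order that lets each one feed the next. The single observation that unlocks most of the proposition is the following: if $F$ is cosymplectically isotropic then $\psi|_F\equiv 0$. Indeed, for $x\in F$ the relation $b(x,x)+\psi(x)^2=0$ combined with the antisymmetry of $b$ forces $\psi(x)=0$. From this I immediately get $F\subset\ker\psi$, hence $b(\xi,y)+\psi(\xi)\psi(y)=0$ for every $y\in F$, i.e.\ $\xi\in F^{b,\psi}$, and the same relation evaluated at $y=\xi$ shows $\xi\notin F$ since otherwise $\widetilde I_{\psi,b}(\xi)(\xi)=\psi(\xi)^2=1\neq 0$. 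This already gives item (3).

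Item (1) is just the translation: isotropy of $F$ is the inclusion $F\subset F^{b,\psi}$, and coisotropy of $F^{b,\psi}$ is $(F^{b,\psi})^{b,\psi}\subset F^{b,\psi}$, which by reflexivity $(F^{b,\psi})^{b,\psi}=F$ becomes the same inclusion. Item (4) is the elementary dimension count: $F\subset F^{b,\psi}$ combined with $\dim F+\dim F^{b,\psi}=\dim V=2n+1$ yields $\dim F\le n=(\dim V-1)/2$. Item (6) is again pure linear algebra: the restriction of $\widetilde I_{\psi,b}$ to $F$ is nondegenerate iff $F\cap F^{b,\psi}=\{0\}$, and since the two dimensions add up to $\dim V$, this is equivalent to $V=F\oplus F^{b,\psi}$.

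For item (2), the forward direction is immediate from the definition $F^{b,\psi}=\langle\xi\rangle\oplus F$. For the converse, assume $F$ isotropic with $\dim F^{b,\psi}=\dim F+1$. Using the key lemma above, $\xi\in F^{b,\psi}$, and using (3), $\xi\notin F$, so $\langle\xi\rangle\oplus F\subset F^{b,\psi}$ is an inclusion of subspaces of the same dimension, hence an equality; this is exactly Lagrangianity. For item (5), the Lagrangian $\Rightarrow$ maximal isotropic direction follows because any isotropic $L\supset F$ satisfies $L\subset L^{b,\psi}\subset F^{b,\psi}=\langle\xi\rangle\oplus F$, and since $\xi\notin L$ (by (3)), $L=F$. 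The reverse direction is the main obstacle: assuming $F$ maximal isotropic but not Lagrangian, I would pick any $v\in F^{b,\psi}\setminus(F+\langle\xi\rangle)$, subtract $\psi(v)\xi$ to land in $\ker\psi\cap F^{b,\psi}$, and then verify directly that $F+\langle v\rangle$ is still isotropic (using $v\in F^{b,\psi}$ and $\psi(v)=0$), contradicting maximality.

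Finally, item (7) follows from (5) by extending $F$ to a maximal isotropic $L$ in the finite-dimensional lattice of isotropic subspaces containing $F$; by (5) such an $L$ is Lagrangian, and the inclusion $L\subset F^{b,\psi}$ follows from $F\subset L\subset L^{b,\psi}\subset F^{b,\psi}$. The only delicate step in the whole proposition is the extension argument inside (5); everything else is an application of the key lemma $\psi|_F=0$ and the standard orthogonality identities of Proposition \ref{Ortho-2}.
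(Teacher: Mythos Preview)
Your argument is correct. The key observation $\psi|_F\equiv 0$ for isotropic $F$ (obtained by evaluating $\widetilde I_{\psi,b}(x)(x)=\psi(x)^2=0$) is exactly the right lever, and every subsequent step checks out, including the extension step in (5): after replacing $v$ by $v-\psi(v)\xi$ one has $v\in\ker\psi\cap F^{b,\psi}\setminus F$, and then $b(v,f)=\widetilde I_{\psi,b}(v)(f)=0$ for $f\in F$ together with $b(v,v)=0$ shows $F+\langle v\rangle$ is isotropic. One small remark on your forward direction of (5): the implication ``$L\subset\langle\xi\rangle\oplus F$, $\xi\notin L$, $F\subset L$ $\Rightarrow$ $L=F$'' is fine, but the reason is that any $a\xi+f\in L$ with $a\neq 0$ would force $\xi=a^{-1}((a\xi+f)-f)\in L$; it is worth making this explicit.

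As for comparison with the paper: there is nothing to compare. Proposition~\ref{Prop-1} is stated in the paper with a citation to \cite{S-C-M} and no proof is given; the authors treat it as an imported result. Your write-up therefore supplies what the paper omits, and does so cleanly via the single lemma $\psi|_F=0$ combined with the orthogonality identities of Proposition~\ref{Ortho-2}.
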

\begin{example}\label{Ex-1}
	Consider a 5-dimensional cosymplectic vector space \( V = \mathbb{R}^5 \) with a cosymplectic structure given 
	by a closed 1-form \( \psi = dx_3\) and a closed 2-form \( \omega = dx_1 \wedge dx_2 + dx_4 \wedge dx_5\).
	 Let's assume the Reeb vector field \( \xi \) is \((1, 0, 0, 0, 0)\). Let \( F \) be a 3-dimensional subspace of \( V \) spanned by the vectors \( (0, 1, 0, 0, 0) \), \( (0, 0, 1, 0, 0) \), and \( (0, 0, 0, 1, 0) \). We denote this subspace as \( F = \text{span} \{ e_2, e_3, e_4 \} \). To find its cosymplectic orthogonal \( F^{b, \psi} \):
	\[ F^{b, \psi} = \{ x \in V : \widetilde{I}_{\psi, b}(x)(y) = 0, \forall y \in F \}. \]
	Assume \( F^{b, \psi} \) is spanned by the vectors \( (1, 0, 0, 0, 0) \) and \( (0, 0, 0, 0, 1) \), giving us \( F^{b, \psi} = \text{span} \{ e_1, e_5 \} \). To check the coisotropic property:
	$\{ e_1, e_5 \} \subseteq \text{span} \{ e_2, e_3, e_4 \}.$
	Since \( e_1 \) and \( e_5 \) are not in the span of \( \{ e_2, e_3, e_4 \} \), this subspace is not coisotropic. However, if we modify \( F \) to include these vectors, let \( F' = \text{span} \{ e_1, e_2, e_3, e_4 \} \), then: $ F^{b, \psi} = \text{span} \{ e_5 \}.$ 
	In this case:
	$ \text{span} \{ e_5 \} \subseteq \text{span} \{ e_1, e_2, e_3, e_4 \}.$
	Thus, \( F' \) is coisotropic.
	\end{example}
	\begin{example}
	Consider a 5-dimensional cosymplectic vector space \( V = \mathbb{R}^5 \) as in Example \ref{Ex-1}. Let \( F \) be a 2-dimensional subspace of \( V \) spanned by the vectors \( (0, 1, 0, 0, 0) \) and \( (0, 0, 1, 0, 0) \). We denote this subspace as \( F = \text{span} \{ e_2, e_3 \} \). To check if \( F \) is isotropic, evaluate the cosymplectic form on pairs of vectors in \( F \):
	\[ \widetilde{I}_{\psi, b}(e_2)(e_2) = 0, \quad \widetilde{I}_{\psi, b}(e_2)(e_3) = 0, \quad \widetilde{I}_{\psi, b}(e_3)(e_2) = 0, \quad \widetilde{I}_{\psi, b}(e_3)(e_3) = 0. \]
	Since the cosymplectic form evaluates to zero on all pairs of vectors in \( F \), \( F \) is isotropic.
	Thus, \( F = \text{span} \{ e_2, e_3 \} \) is a cosymplectically isotropic subspace of the cosymplectic vector space \( V \).
\end{example}

\begin{example}
	Consider a 5-dimensional cosymplectic vector space \( V = \mathbb{R}^5 \) as  in Example \ref{Ex-1}. Let \( F \) be a 2-dimensional subspace of \( V \) spanned by the vectors \( (0, 1, 0, 0, 0) \) and \( (0, 0, 1, 0, 0) \). We denote this subspace as \( F = \text{span} \{ e_2, e_3 \} \). To check if \( F \) is Lagrangian-like:
	 Verify that \( \widetilde{I}_{\psi, b}(x)(y) = 0 \) for all \( x, y \in F \).
		\[
		\widetilde{I}_{\psi, b}(e_2)(e_2) = 0, \quad \widetilde{I}_{\psi, b}(e_2)(e_3) = 0, \quad \widetilde{I}_{\psi, b}(e_3)(e_2) = 0, \quad \widetilde{I}_{\psi, b}(e_3)(e_3) = 0.
		\]
		Since the cosymplectic form evaluates to zero on all pairs of vectors in \( F \), \( F \) is isotropic. 
		The Reeb vector field \( \xi = (1, 0, 0, 0, 0) \) is not in \( \text{span} \{ e_2, e_3 \} \). 
		If \( F = \text{span} \{ e_2, e_3 \} \), then \( F^{b, \psi} = \text{span} \{ e_1, e_4, e_5 \} \).
		Therefore, \( F^{b, \psi} = \text{span} \{ \xi \} \oplus F \).
		Since \( F \) satisfies these conditions, it is a cosymplectically Lagrangian subspace of \( V \).

\end{example}

\subsubsection{Cosymplectic Lagrangian-like Grassmanian}
The set of Lagrangian-like subspaces of  $(V, b, \psi)$  is called the Lagrangian-like Grassmannian and denoted
$\mathfrak{Co}-$Lag(V). By Proposition \ref{Prop-1} (item$-7$), the set $\mathfrak{Co}-$Lag(V) is non-empty.  We have the following facts.

\begin{proposition}\label{Prop-2}
	Let $(V, b, \psi)$ be a real cosymplectic vector space with Reeb vector $\xi$. 	
	\begin{enumerate}
		\item  The direct sum $ F\oplus \langle \xi\rangle   $ is cosymplectically coisotrope and minimal iff $F $ is Lagrangian-like.
		\item Given any finite collection of elements $L_1,\dots, L_k$ in 	$\mathfrak{Co}-$Lag(V), one can
		find an element $L\in 	\mathfrak{Co}-$Lag(V) with $L\cap L_j = \{0\} $ for  $0\leq j\leq k$.
		
	\end{enumerate}			
	
\end{proposition}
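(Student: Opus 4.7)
The plan is to tackle (1) via the duality $(F^{b,\psi})^{b,\psi}=F$ from Proposition \ref{Ortho-2}(2) together with the rank identity, and to handle (2) by reducing to the classical Lagrangian Grassmannian of the symplectic vector space $(\ker\psi,\, b|_{\ker\psi})$.

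For the forward direction of (1), assuming $F$ is Lagrangian-like we have $F^{b,\psi}=\langle\xi\rangle\oplus F$ by definition, so Proposition \ref{Ortho-2}(2) yields $(F\oplus\langle\xi\rangle)^{b,\psi}=(F^{b,\psi})^{b,\psi}=F\subseteq F\oplus\langle\xi\rangle$, establishing coisotropy. The rank identity $\dim F+\dim F^{b,\psi}=\dim V$ forces $\dim F=\tfrac{\dim V-1}{2}$, so $F\oplus\langle\xi\rangle$ has dimension $\tfrac{\dim V+1}{2}$, which is the minimum possible dimension for a coisotropic subspace in a $(2n+1)$-dimensional cosymplectic space. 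Conversely, set $G:=F\oplus\langle\xi\rangle$ and assume it is minimal coisotropic, so $\dim G^{b,\psi}=n$. Since $\xi\in G$, any $z\in G^{b,\psi}$ satisfies $\widetilde{I}_{\psi,b}(z)(\xi)=\psi(z)=0$ (using $\iota_\xi b=0$ and $\psi(\xi)=1$), hence $G^{b,\psi}\subseteq\ker\psi$; as $G\cap\ker\psi$ is a hyperplane in $G$, a dimension count forces $G^{b,\psi}=G\cap\ker\psi$. Taking $F$ to be this canonical complement of $\langle\xi\rangle$ inside $\ker\psi$, the inclusion $F=G^{b,\psi}\subseteq F^{b,\psi}$ gives isotropy, and Proposition \ref{Ortho-2}(2) yields $F^{b,\psi}=(G^{b,\psi})^{b,\psi}=G=F\oplus\langle\xi\rangle$, so $F$ is Lagrangian-like.

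For (2), I would first verify that $(\ker\psi,\, b|_{\ker\psi})$ is a $2n$-dimensional symplectic vector space: antisymmetry is immediate and non-degeneracy follows from the bijectivity of $\widetilde{I}_{\psi,b}$ together with $\iota_\xi b=0$. A subspace $L\subseteq V$ is then Lagrangian-like in $(V,b,\psi)$ if and only if $L\subseteq\ker\psi$ and $L$ is Lagrangian in $(\ker\psi,\, b|_{\ker\psi})$, so the problem reduces to producing a Lagrangian $\overline{L}\subseteq\ker\psi$ transverse to each of the Lagrangians $\overline{L_j}\subseteq\ker\psi$. This is classical symplectic linear algebra: the set of Lagrangians transverse to a fixed Lagrangian is the complement of the Maslov cycle, hence open and dense in the Lagrangian Grassmannian, and a finite intersection of open dense subsets of this connected smooth manifold is nonempty. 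Lifting $\overline{L}$ back through the bijection gives the desired $L\in\mathfrak{Co}$-Lag$(V)$.

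The main obstacle I anticipate is the converse in (1): the decomposition $G=F\oplus\langle\xi\rangle$ is non-unique and a generic complement of $\langle\xi\rangle$ in $G$ can fail the cosymplectic isotropy condition (for instance, a complement picked outside $\ker\psi$ may violate $\psi|_F=0$). The pivotal insight is that one must select $F=G\cap\ker\psi$, after which Proposition \ref{Ortho-2} does all the remaining heavy lifting, and the reduction in (2) becomes routine.
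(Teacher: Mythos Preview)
Your argument is correct, and in fact more complete than the paper's: the paper's written proof addresses only item~(2), leaving item~(1) implicit, whereas you supply a full argument for both directions of~(1) via the duality $(F^{b,\psi})^{b,\psi}=F$ and the rank identity. Your observation that the converse of~(1) only holds for the canonical complement $F=G\cap\ker\psi$ (and can fail for an arbitrary complement of $\langle\xi\rangle$ in $G$, e.g.\ $F=\mathrm{span}\{e_1+\xi\}$ in the $3$-dimensional model) is a genuine clarification of the statement that the paper does not make.

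For item~(2) the two approaches diverge. The paper stays intrinsic to the cosymplectic setting: it takes $L$ maximal among cosymplectically isotropic subspaces transverse to every $L_j$, and argues by contradiction that if $L^{b,\psi}\neq L\oplus\langle\xi\rangle$ then the quotient $L^{b,\psi}/(L\oplus\langle\xi\rangle)$ is nontrivial and one can pick a line $S$ in it transverse to each image $\pi(L_j\cap L^{b,\psi})$, so that $\pi^{-1}(S)$ is a strictly larger isotropic subspace still transverse to all $L_j$. Your route instead observes that cosymplectic isotropy forces $\psi|_L=0$ (from $\widetilde I_{\psi,b}(x)(x)=\psi(x)^2$), so $\mathfrak{Co}\text{-Lag}(V)$ is in bijection with the ordinary Lagrangian Grassmannian of $(\ker\psi,\,b|_{\ker\psi})$, and then you invoke the classical open-dense Maslov-cycle argument. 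Your reduction is cleaner and imports a well-known theorem; the paper's argument is self-contained and constructive (one enlarges $L$ one dimension at a time), which has the mild advantage of not appealing to the manifold structure of the Grassmannian or to Baire-type density statements.
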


\noindent \textbf{Proof.} 	
Let $(V, b, \psi)$ be a cosymplectic vector space with Reeb vector $\xi$. 
Assume that $L$ is a cosymplectically isotropic with $L\cap L_j = \{0\}$  which is not strictly contained
in a larger cosymplectically isotropic subspace $\bar L$ 
with $\bar L\cap L_j = \{0\}$ for all $j$.  Derive from Proposition \ref{Prop-1} that 
$L^{b,\psi}$ is a cosymplectically coisotropic subspace which properly containing both $L$ and $ \langle\xi\rangle$. Next, consider the quotient map $  \pi  :L^{b,\psi}\rightarrow L^{b,\psi}/ \left( L\oplus \langle\xi\rangle\right)  $:  A direct calculation shows that for each $j$, the subspace $\pi (L_j \cap L^{b,\psi})$ is cosymplecticcally isotropic and then, has positive 
codimension. So, one can choose any  $1-$dimensional subspace $S \subseteq  L^{b,\psi}/ L\oplus \langle\xi\rangle$  which is transversal to each of the images 
$\pi (L_j \cap L^{b,\psi})$.  Therefore, setting $\mathfrak{L} := \pi^{-1}(S)$, yields  a cosymplectically isotropic subspace containing  $L$ with  $ \mathfrak{L} \cap L_j = \{0\}$. This contradiction suggests that we necessarily have $ L^{b,\psi} =  L\oplus \langle\xi\rangle$. Once more Proposition \ref{Prop-1} implies that such a subspace $L$ is Lagrangian-like. $\square$\\
\begin{example}
		Let \(V\) be a 5-dimensional real vector space  with a cosymplectic structure \((b, \psi)\), where \(\xi\) stands for the Reeb vector field. Suppose we have an orthonormal basis \(e_1, e_2\) for a Lagrangian-like subspace \(L\) of \(V\). We also have:
		$
		f_i = \tilde{J} e_i \quad \text{for} \quad i=1,2,
		$ where \(\tilde{J}\) is the co-complex structure. Thus, the basis for \(\ker \psi\) is \(\{ e_1, e_2, f_1, f_2 \}\), and adding the Reeb vector \(\xi\), we get the full basis \(\{ e_1, e_2, f_1, f_2, \xi \}\) for \(V\).
		We have that  \(F \oplus \langle \xi \rangle\) is cosymplectically coisotrope and minimal if \(F = \langle e_1, e_2, f_1, f_2 \rangle\) is Lagrangian-like. Since \(L = \langle e_1, e_2 \rangle\) is Lagrangian-like and \(\tilde{J}\) preserves this property, \(F\) remains Lagrangian-like.  Now, suppose we have two Lagrangian-like subspaces \(L_1 := \langle e_1, e_2 \rangle\) and \(L_2 := \langle e_3, e_4 \rangle\), with \(\{ e_3, e_4 \}\) forming another orthonormal basis. We want to find \(L \in \mathfrak{Co}-\text{Lag}(V)\) such that \(L \cap L_j = \{0\}\) for \(j = 1, 2\). Let's construct \(L := \langle g_1, g_2 \rangle\), where \(g_1\) and \(g_2\) are orthonormal vectors chosen such that they are linear combinations of the basis vectors, ensuring they do not lie in the span of \(L_1\) or \(L_2\). For instance, choosing \(g_1 := e_1 + e_3\) and \(g_2 := e_2 + e_4\) can form an orthonormal basis that is disjoint from both \(L_1\) and \(L_2\).  We have   \(L \cap L_j = \{0\}\), $j =1, 2$.
\end{example}
\begin{proposition} \label{thm000} 
	Let  $V$ be a cosymplectic vector space 
	of dimension $(2n + 1)$. Assume $( b_1, \psi_1)$  and $( b_2, \psi_2)$ are two cosymplectic structures on $V$. Let $U$ be a subspace of $V$ which is Lagrangian-like for both cosymplectic structures $( b_1, \psi_1)$  and $( b_2, \psi_2)$, and let $W$
	be any complement to $U$ in $V$. Then we can canonically construct a linear
	isomorphism $L : V \rightarrow V$ such that $L_{|U} = Id_U$,  $L^\ast \psi_2 = \psi_1$ and $L^\ast b_2 = b_1$.
\end{proposition}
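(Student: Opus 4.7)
The plan is to imitate the linear Weinstein lemma from symplectic geometry, adapted to track the Reeb direction, and construct $L$ as a composition $L_0 \circ L_1$. First I would note that cosymplectic isotropy $b_i(u,u) + \psi_i(u)^2 = 0$ together with antisymmetry of $b_i$ forces $\psi_i|_U = 0$, so $U \subset \ker\psi_1 \cap \ker\psi_2$. Combining this with $U^{b_i,\psi_i} = U\oplus\langle\xi_i\rangle$, one checks that each map
\[
\Phi_i : V/U \longrightarrow U^*\oplus\RR,\qquad [v]\longmapsto \bigl(b_i(v,\cdot)|_U,\ \psi_i(v)\bigr)
\]
is a linear isomorphism (both sides have dimension $n+1$; injectivity uses $\psi_i(\xi_i)=1$ to rule out the Reeb direction). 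Using the given decomposition $V = U\oplus W$ to identify $V/U$ with $W$ converts $\Phi_i$ into an isomorphism $A_i : W \to U^*\oplus\RR$. I would then set $L_0|_U := \mathrm{Id}_U$ and $L_0|_W := A_2^{-1}\circ A_1$.

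By construction $L_0^*\psi_2 = \psi_1$: both sides vanish on $U$, and on $W$ the identity is exactly the matching of the $\RR$-components of $A_1$ and $A_2\circ L_0$. The analogous matching of the $U^*$-components gives $L_0^*b_2 = b_1$ on $U\times V$, hence on $V\times U$ by antisymmetry. The residual obstruction is the antisymmetric form $\beta := L_0^*b_2 - b_1$, which vanishes whenever one argument lies in $U$ and so descends to an antisymmetric form on $V/U \cong W$. To kill $\beta$ I would search for a correction of the form $L_1 = \mathrm{Id}_V + A$, where $A: V \to V$ has kernel containing $U$ and image in $U$ (equivalently a linear map $A:W\to U$, extended by zero on $U$). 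A short calculation gives $L_1^* b_1 = b_1 + B_A$ with
\[
B_A(w_1,w_2) \;:=\; b_1(Aw_1, w_2) + b_1(w_1, Aw_2),
\]
the quadratic term $b_1(Aw_1,Aw_2)$ vanishing because its arguments lie in $U\subset\ker\psi_1$ and $U$ is cosymplectically isotropic. One has $L_1^*\psi_1 = \psi_1$ (since $A$ lands in $\ker\psi_1$) and $L_1^*\beta = \beta$ (since $A$ lands in $U$ and $\beta$ vanishes on $U$-inputs), so $L := L_0\circ L_1$ will satisfy $L|_U = \mathrm{Id}_U$, $L^*\psi_2 = \psi_1$, and $L^*b_2 = b_1 + B_A + \beta$.

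The main and only real obstacle is therefore surjectivity of the linear map $A\mapsto B_A$ from $\Hom(W,U)$ to $\Lambda^2 W^*$, which must be established so that $B_A = -\beta$ is solvable. I would check this by decomposing a rank-one $A = \phi\otimes u$ (with $\phi\in W^*$, $u\in U$) to get $B_A = \phi\wedge \chi_u$, where $\chi_u := b_1(u,\cdot)|_W$. Since $\iota_{\xi_1}b_1 = 0$, the covector $\chi_u$ annihilates the projection $\pi_W(\xi_1)$, so $u \mapsto \chi_u$ embeds $U$ as the hyperplane $\mathrm{Ann}(\pi_W(\xi_1))\subset W^*$ (injectivity uses bijectivity of $\widetilde{I}_{\psi_1,b_1}$, together with $U$-isotropy and $U\subset\ker\psi_1$). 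Choosing a basis $\phi_0,\phi_1,\ldots,\phi_n$ of $W^*$ with $\phi_0(\pi_W(\xi_1)) = 1$ and $\phi_j(\pi_W(\xi_1)) = 0$ for $j \geq 1$, the wedges $\phi_0\wedge\phi_j$ and $\phi_i\wedge\phi_j$ for $1 \leq i < j \leq n$ all have the form $\phi\wedge\chi_u$ and exhaust a basis of $\Lambda^2 W^*$, proving surjectivity and completing the construction.
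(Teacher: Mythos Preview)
Your argument is correct. It differs from the paper's proof in its overall architecture, though both are recognisable as variants of the linear Weinstein lemma.

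The paper does not work with the given complement $W$ directly. Instead it first replaces $W$ ``canonically'' by two complements $W_1,W_2$ of $U$ with $W_i = O_i\oplus\langle\xi_i\rangle$, where each $O_i\subset\ker\psi_i$ is Lagrangian-like for $(b_i,\psi_i)$; in particular $b_i|_{W_i\times W_i}=0$. The nondegenerate pairings $O_i\times U\to\RR$ via $b_i$ give isomorphisms $\tilde b_i^\ast:O_i\to U^\ast$, and the map $L$ is then simply $\mathrm{Id}_U\oplus\tilde C$ with $\tilde C(o_1+a\xi_1)=(\tilde b_2^\ast)^{-1}\tilde b_1^\ast(o_1)+a\xi_2$. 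Because both $W_i$ are already ``Lagrangian'' in the relevant sense, the verification of $L^\ast b_2=b_1$ reduces to the $U\times W_i$ cross-terms and no correction step is needed.

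Your approach keeps the single arbitrary $W$ fixed throughout, builds $L_0$ from the isomorphisms $A_i:W\to U^\ast\oplus\RR$, and then absorbs the residual antisymmetric form $\beta$ on $W$ by a shear $L_1=\mathrm{Id}+A$ with $A:W\to U$. The trade-off is clear: the paper's route avoids your surjectivity argument for $A\mapsto B_A$ onto $\Lambda^2W^\ast$, but it pays for this by the (unexplained in the paper) passage from $W$ to the special complements $W_i$. Your route is more self-contained and makes the dependence on $W$ completely explicit; the surjectivity check via the hyperplane $\mathrm{Ann}(\pi_W(\xi_1))\subset W^\ast$ is the genuine extra work, and you carry it out correctly.
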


\begin{proof}

	Using $W$ we canonically obtain complements $W_1$ and $W_2$ to $U$ in $V$ such
	that $W_1$ is Lagrangian-like for $( b_1, \psi_1)$ and $W_2$ is Lagrangian-like for $( b_2, \psi_2)$. Let $\xi_i$ be the Reeb vector of $(V, b_i, \psi_i)$,and derive that a splitting of $W_i$  reads  $O_i\oplus \langle\xi_i\rangle$, where $O_i\subset \ker\psi_i$ is Lagrangian-like with respect to  $( b_i, \psi_i)$.  
	The non-degenerate
	bilinear maps  
	\begin{equation}
		O_i\times U  \xrightarrow{ b_{i|\ker\psi_i}} \mathbb R,
	\end{equation}
	$i = 1, 2$,  yield isomorphisms $ O_i  \xrightarrow{\tilde{ b_i}^\ast}  U^\ast, $
	$i = 1, 2$.  Consider the following diagram
	\begin{center}
		\begin{tikzpicture}
			\matrix (m) [matrix of math nodes,row sep=3em,column sep=4em,minimum width=2em] {
				O_1 & U \\
				O_2 & U \\};
			\path[-stealth]
			(m-1-1) edge node [left] {$\tilde B$} (m-2-1)
			edge [double] node [below] {$\tilde b_1^\ast$} (m-1-2)
			(m-2-1.east|-m-2-2) edge node [below] {$\tilde b_2^\ast $} node [above] {} (m-2-2)
			(m-1-1) edge node [right] {} (m-2-2)
			edge [dashed,-] (m-2-2);
		\end{tikzpicture}
	\end{center}
	where the linear map $\tilde B$ satisfies $  b_1(o_1, u) =  b_2(\tilde B o_1, u)$ for all $o_1\in O_1$, for all $ u\in U$. Define a map,
	$
	\tilde C : O_1\oplus \langle\xi_1\rangle\rightarrow O_2\oplus \langle\xi_2\rangle, (o_1, a\xi_1)\mapsto (\tilde B o_1, a\xi_2),
	$
	and extend $\tilde C$ to the rest of $V$ as the identity on $U$. 	
	Then, take $   L : = Id_U\oplus \tilde C$ defined from $U\oplus W_1$ to $ U\oplus W_2$. 
	Compute, 
	\begin{eqnarray}\begin{array}{cclccccccc}  
			b_2(L(u + o_1 + a\xi_1), L(u' + o_1' + a'\xi_1))      &= &  
			b_2(u + \tilde C(o_1 + a\xi_1), u' + \tilde C(o_1' + a'\xi_1))  \nonumber\\
			&=&   b_2(u ,\tilde C(o_1' + a'\xi_1)) + b_2(\tilde C(o_1 + a\xi_1), u') \nonumber \\
			& = &  b_2(u ,\tilde B(o_1') + a'\xi_2) + b_2(\tilde B(o_1) + a\xi_2, u') , \nonumber \\
			& = &  b_2(u ,\tilde B(o_1')) + b_2(\tilde B(o_1), u') \nonumber \\
			& = & b_1(u , o_1' + a'\xi_1) + b_1(o_1 + a\xi_1, u')
			\nonumber \\
			& = &  b_1(u + o_1 + a\xi_1, o_1' + a'\xi_1 + u'), \\
	\end{array}\end{eqnarray}
	
	and  
$$
			\psi_2(L(u + o_1 + a\xi_1))     =  
			\psi_2(u + \tilde C(o_1 + a\xi_1)) =
			 \psi_2( a\xi_2) =
			 a =
			  \psi_1( a\xi_1) =
			\psi_1(u + o_1 + a\xi_1).$$ 

\end{proof}

\begin{proposition}\label{Prop-3}
	
	Every real cosymplectic vector space $(V, b, \psi)$ of dimension $2n + 1$ is cosymplectomorphic to $\mathbb R^{2n + 1}$ with the standard symplectic form from Example \ref{Standard}. 
\end{proposition}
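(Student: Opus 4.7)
The plan is to build a symplectic Darboux-type basis of $V$ adapted to $\psi$, with the Reeb vector as the extra $(2n+1)$-st basis vector, and then define the cosymplectomorphism by sending this basis to the standard one of Example \ref{Standard}.

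First I would isolate the Reeb vector $\xi \in V$, defined as the unique solution of $\widetilde{I}_{\psi,b}(\xi) = \psi$, which exists and is unique because $\widetilde{I}_{\psi,b}: V \to V^*$ is a bijection. Evaluating the defining equation $b(\xi,u) + \psi(\xi)\psi(u) = \psi(u)$ at $u=\xi$ and using antisymmetry of $b$ forces $\psi(\xi) \in \{0,1\}$; the case $\psi(\xi)=0$ would give $b(\xi, \cdot) = \psi$, which together with antisymmetry implies $\psi(\xi) = -\psi(\xi)$, hence $\psi \equiv 0$ on $\langle\xi\rangle$, and one readily finds an inconsistency with bijectivity. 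So $\psi(\xi)=1$ and $\iota_\xi b = 0$, giving the canonical splitting $V = \ker\psi \oplus \langle\xi\rangle$.

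Next I would show that $(\ker\psi,\, b|_{\ker\psi})$ is a $2n$-dimensional symplectic vector space. The only nontrivial point is non-degeneracy: if $v \in \ker\psi$ satisfies $b(v,u)=0$ for every $u \in \ker\psi$, then combined with $b(v,\xi) = -b(\xi,v) = 0$ and $\psi(v)=0$, one gets $\widetilde{I}_{\psi,b}(v)(u)=0$ for every $u \in V$, forcing $v=0$ by bijectivity of $\widetilde{I}_{\psi,b}$.

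Once this is established, I would invoke the classical symplectic Darboux basis theorem on $(\ker\psi,\, b|_{\ker\psi})$ to produce vectors $e_1,\dots,e_n,f_1,\dots,f_n \in \ker\psi$ with $b(e_i,f_j)=\delta_{ij}$ and $b(e_i,e_j)=b(f_i,f_j)=0$. Adjoining $\xi$ gives a basis of $V$, and the linear map $L : V \to \mathbb R^{2n+1}$ sending this basis to the standard basis $\{e_1^0,\dots,e_n^0,f_1^0,\dots,f_n^0,\xi^0\}$ of Example \ref{Standard} satisfies $L^*\psi_0 = \psi$ and $L^*b_0 = b$ by construction, so it is the desired cosymplectomorphism. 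The only genuine obstacle in the whole argument is verifying that $b|_{\ker\psi}$ is non-degenerate; everything else is a direct transcription of the symplectic Darboux procedure.
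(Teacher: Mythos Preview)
Your proof is correct but follows a different route from the paper's. You reduce directly to the classical symplectic linear Darboux theorem: after isolating the Reeb vector and the splitting $V = \ker\psi \oplus \langle\xi\rangle$, you verify that $(\ker\psi,\, b|_{\ker\psi})$ is symplectic and import a Darboux basis from there. The paper instead works intrinsically with the Lagrangian-like Grassmannian: it picks two transverse Lagrangian-like subspaces $L, M \in \mathfrak{Co}\text{-Lag}(V)$ (whose existence is guaranteed by Propositions~\ref{Prop-1} and~\ref{Prop-2}), notes that the pairing $L \times M \to \mathbb{R}$, $(u,v) \mapsto \widetilde{I}_{\psi,b}(u)(v) = b(u,v)$ is non-degenerate, and uses the induced isomorphism $M \cong L^*$ to obtain dual bases which, together with $\xi$, form a cosymplectic basis. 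Your approach is shorter and relies only on the standard symplectic fact; the paper's approach exercises the Lagrangian-like machinery built up earlier and, as a dividend, produces a cosymplectic basis adapted to \emph{any} prescribed transverse pair $L, M$ --- exactly what is exploited in the Remark following the proposition. One small point: your exclusion of the case $\psi(\xi)=0$ is a little hasty (the step ``antisymmetry implies $\psi(\xi)=-\psi(\xi)$'' is just $0=-0$); a clean fix is to observe that since $b$ is antisymmetric on an odd-dimensional space its radical is nonzero, and for any $0\neq w$ in that radical one has $\widetilde{I}_{\psi,b}(w) = \psi(w)\psi$, so bijectivity forces $\psi(w)\neq 0$ and then $\xi = w/\psi(w)$ with $\psi(\xi)=1$.
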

\begin{proof}
	Let $(V, b, \psi)$ be a cosymplectic vector space with Reeb vector $\xi$. 	Pick  $L, M \in \mathfrak{Co}-$Lag(V) which are transverse (i.e., $L\cap M = \{0\}$).  The pairing 
	$ L\times M \rightarrow \mathbb R, (u, v)\mapsto \left(  \widetilde{I}_{\psi, b}(u)\right)(v) $ is non-degenerate, and  we have $ \left( \widetilde{I}_{\psi, b}(u)\right)(v) = b(u, v)$ for all $u, v\in \ker \psi$ since $V$ splits as $V = \ker\psi\oplus \langle\xi \rangle $  (see  \cite{S-C-M}). This induces the following  isomorphism
	: 
	$
		M  \hookrightarrow   V \xrightarrow{\widetilde{I}_{\psi, b}} V^\ast \rightarrow L^\ast,
	$
	where the arrow "$\hookrightarrow$" represents the inclusion map, and the last map is the dual to the inclusion of $i_0 :L \hookrightarrow V$.  Let $\{\tau_1,\dots, \tau_n\}$ be a basis of $L$ and, $ \{f_1,\dots, f_n\}$ be a basis of $M$ such that $i_0^\ast(\widetilde{I}_{\psi, b}(f_j)) = \tau_j^\ast$, where $\tau^\ast_j$ is the dual element of $\tau_j$ in the usual sense. 
	By definition of the pairing, the set $\{\tau_1,\dots, \tau_n, f_1,\dots, f_n, \xi \}$ is such that $\psi(\tau_i) = 0 = \psi(f_j)$, $\psi(\xi) = 1$; and $ b(\tau_i, \tau_j) = 0  =  b(f_i, f_j)$, $ b(f_i, \xi) =  0 =  b(\xi, \tau_j)$, and $  b(f_i, \tau_j) = \delta_{ij}.$ 
	
\end{proof}

\begin{remark}
	Proposition \ref{Prop-3}  expresses a flexible aspect of cosymplectic geometry in the sense that : Given two  cosymplectic vector spaces $(V_i, b_i, \psi_i)$ of the same dimension, and considering $ M_i, L_i\in  \mathfrak{Co}-$Lag($V_i$) with  $L_i\cap M_i = \{0\}$ for $i = 1,2$, there exists a cosymplectomorphism $B : (V_1, b_1, \psi_1)\rightarrow (V_2, b_2, \psi_2)$ such that $B(M_1) = M_2$, and $B(L_1) = L_2$.
\end{remark}
\subsection{Co-complex structure}
Let $(V, b, \psi)$  be a cosymplectic vector space with Reeb vector $\xi$. The splitting $V : =\ker\psi \oplus\langle \xi\rangle $ yields  projection maps, $p_1 : V \rightarrow\ker\psi,$ and $p_2 : V \rightarrow \langle\xi \rangle,$   and  inclusion maps $i_0 : \ker \psi \rightarrow V$ and $j_0: \langle\xi \rangle \rightarrow V$. 
\begin{definition}
	A map $\tilde J : V \rightarrow V$ is called a co-complex structure if:
	
	$ p_1\circ \tilde J\circ\tilde J\circ i_0 = - Id_{|\ker\psi} $, and
	$ p_2\circ \tilde J\circ\tilde J\circ j_0 =  Id_{|\langle \xi \rangle}$.

\end{definition}
This essentially means that $ \tilde J$ 
behaves like a complex structure on the part of $V$ 
orthogonal to the Reeb vector and leaves the Reeb vector component unchanged.
\begin{definition}
	
	A co-complex structure $\tilde J$ on $ (V, b, \psi)$ is compatible with the cosymplectic structure on $V$ if  the bilinear form 
	$\tilde g(u, v) : = \left( \widetilde{I}_{\psi, b}(u)\right) (\tilde J(v))$ induces a positive definite inner product on $V$.
\end{definition}

We denote by $\tilde {\mathcal{J} }(V, b, \psi)$ the space of all compatible co-complex structures on  $(V, b, \psi)$. 
\begin{example}
	From Example \ref{Standard}, we define $\tilde J$ as follows:  $ \tilde J e_i = f_i$,  $ \tilde J f_i = - e_i$, and $\tilde J(\xi) = \xi$. This identifies $ (\mathbb R^{2n +1}, b, \psi, \tilde J )$ with $\mathbb C^n\times \mathbb R$.
\end{example}

\begin{example}
	Consider a 5-dimensional cosymplectic vector space \( V = \mathbb{R}^5 \) as in Example \ref{Ex-1}.  We have $
	\ker \psi = \text{span} \{ e_2, e_3, e_4, e_5 \}.
	$, and $
	\langle \xi \rangle = \text{span} \{ e_1 \}.
	$	Define a map \( \tilde{J} \) on \( V \) such that:
	 On \( \ker \psi \), \( \tilde{J} \) acts like a complex structure:
		$
		\tilde{J}(e_2) = e_3, \quad \tilde{J}(e_3) = -e_2, \quad \tilde{J}(e_4) = e_5, \quad \tilde{J}(e_5) = -e_4.
		$
		 On \( \langle \xi \rangle \), \( \tilde{J} \) acts like the identity:
		$
		\tilde{J}(e_1) = e_1.
		$
	 For \( x \in \ker \psi \):$ 
	\tilde{J} \circ \tilde{J} (e_2) = \tilde{J}(e_3) = -e_2, \quad \tilde{J} \circ \tilde{J} (e_3) = \tilde{J}(-e_2) = e_2.
	$\\
	$
	\tilde{J} \circ \tilde{J} (e_4) = \tilde{J}(e_5) = -e_4, \quad \tilde{J} \circ \tilde{J} (e_5) = \tilde{J}(-e_4) = e_4.
	$	Hence, 
	$
	p_1 \circ \tilde{J} \circ \tilde{J} \circ i_0 = -\text{Id}_{|\ker \psi}.
	$
	For \( x \in \langle \xi \rangle \):
	$
	\tilde{J} \circ \tilde{J} (e_1) = \tilde{J}(e_1) = e_1.
	$
	Thus, 
	$
	p_2 \circ \tilde{J} \circ \tilde{J} \circ j_0 = \text{Id}_{|\langle \xi \rangle}.
	$
		To check if \(\tilde{J}\) is compatible with the cosymplectic structure, define the bilinear form:
	$
	\tilde{g}(u, v) := \left( \widetilde{I}_{\psi, b}(u) \right)(\tilde{J}(v)).
	$
	This form \(\tilde{g}(u, v)\) should induce a positive definite inner product on \( V \). If this is satisfied, \(\tilde{J}\) is a compatible co-complex structure. 
\end{example}

For any vector space $V$ let $Riem(V )$ denote the convex open subset of the
space  of symmetric bilinear forms, consisting of positive definite inner products. 
The following Lemma provides a streamlined method for constructing compatible co-complex structures. Instead of having to find these structures through potentially complex and indirect methods, we can use a positive definite inner product  to directly generate the desired co-complex structure.

\begin{lemma}\label{contract-1}
	Let $(V, b, \psi)$ be a real cosymplectic vector space.  There is a canonical continuous surjective map
	$\mathcal F : Riem(V) \rightarrow \tilde {\mathcal{J} }(V, b, \psi)$.
\end{lemma}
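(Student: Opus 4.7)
The plan is to adapt the classical polar-decomposition recipe from symplectic linear algebra to the cosymplectic splitting $V=\ker\psi\oplus\langle\xi\rangle$. Recall that on a symplectic vector space $(W,\omega)$ equipped with an inner product $g$, the relation $\omega(u,v)=g(Au,v)$ defines a $g$-skew endomorphism $A$, so $-A^2$ is $g$-symmetric and positive definite, its unique positive square root $P=\sqrt{-A^2}$ commutes with $A$, and $J:=P^{-1}A$ is a complex structure compatible with $(g,\omega)$. Since $\imath_\xi b=0$, the restriction $b|_{\ker\psi}$ is a genuine symplectic form, so the whole recipe goes through on $\ker\psi$ exactly as in the symplectic case.

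Concretely, I would define $\mathcal F(g)=\tilde J$ by letting $J_0\in\mathrm{End}(\ker\psi)$ be the complex structure produced by the recipe applied to $(\ker\psi,b|_{\ker\psi},g|_{\ker\psi})$, and then setting $\tilde J|_{\ker\psi}=J_0$ together with $\tilde J(\xi)=\xi$. Continuity of $\mathcal F$ follows because the chain $g\mapsto g|_{\ker\psi}\mapsto A\mapsto\sqrt{-A^2}\mapsto P^{-1}A$ is a composition of continuous operations on finite-dimensional vector spaces, the only mildly non-trivial ingredient being the continuity of the positive square root on positive definite symmetric matrices. The co-complex identities $p_1\circ\tilde J\circ\tilde J\circ i_0=-\mathrm{Id}_{\ker\psi}$ and $p_2\circ\tilde J\circ\tilde J\circ j_0=\mathrm{Id}_{\langle\xi\rangle}$ are immediate because $\tilde J$ preserves the splitting by construction.

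For compatibility I would decompose $u=u_0+s\xi$, $v=v_0+t\xi$ and compute
\[
\tilde g(u,v)=\widetilde I_{\psi,b}(u)(\tilde J v)=b(u,\tilde J v)+\psi(u)\psi(\tilde J v).
\]
The cross terms vanish thanks to $\imath_\xi b=0$ and $J_0(\ker\psi)\subseteq\ker\psi$, leaving $\tilde g(u,v)=b(u_0,J_0 v_0)+st$. The classical symplectic identity gives $b(u_0,J_0 v_0)=g|_{\ker\psi}(P u_0,v_0)$, which is a positive definite inner product on $\ker\psi$; combined with the diagonal $st$ contribution one reads off that $\tilde g$ is a positive definite inner product on $V$ making the splitting orthogonal, so $\mathcal F(g)\in\tilde{\mathcal J}(V,b,\psi)$.

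Surjectivity is obtained by running the construction backwards: given $\tilde J\in\tilde{\mathcal J}(V,b,\psi)$, I set $g:=\tilde g_{\tilde J}$. Symmetry of $\tilde g_{\tilde J}$ forces $b(\tilde J u,v)=-b(u,\tilde J v)$ on $\ker\psi$, which after substituting $v\mapsto \tilde J v$ yields $b(\tilde J u,\tilde J v)=b(u,v)$; this is precisely the statement that the operator $A$ attached to the pair $(g|_{\ker\psi},b|_{\ker\psi})$ coincides with $\tilde J|_{\ker\psi}$. Then $P=\mathrm{Id}$ and $J_0=A=\tilde J|_{\ker\psi}$, so $\mathcal F(g)=\tilde J$. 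The main obstacle I anticipate is not $\ker\psi$, where the classical theory does all the work, but the bookkeeping required to confirm that no information on $\langle\xi\rangle$ is lost and that $\tilde g$ really is positive definite across the whole splitting rather than only on each summand; this reduces to the vanishing of the cross terms highlighted above.
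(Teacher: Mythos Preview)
Your proposal is correct and follows the same polar-decomposition strategy as the paper---build the compatible complex structure on the symplectic piece $(\ker\psi,\,b|_{\ker\psi})$ and extend by the identity on $\langle\xi\rangle$---though you restrict $g$ to $\ker\psi$ at the outset, whereas the paper first defines $A$ on all of $V$ via $R(u,v)=\widetilde I_{\psi,b}(u)(Av)$ and then spends a few lines checking that $A$ respects the splitting before running the same polar decomposition on $\tilde A=A|_{\ker\psi}$. Your explicit verification of surjectivity (feeding $\tilde g_{\tilde J}$ back through $\mathcal F$ and observing that the associated $A$ is already $\tilde J|_{\ker\psi}$, so $P=\mathrm{Id}$) is a genuine addition: the paper's proof stops after establishing that $\mathcal F$ is well-defined and continuous, and does not actually address surjectivity.
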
		
\begin{proof}
	Pick $R\in Riem(V)$, and  let $A \in GL(V)$ be defined as follows:\\ 
$
		R(u, v) = 			\left( \widetilde{I}_{\psi, b}(u)\right) (A(v)),
$
	for all $u, v\in V$.
	Splitting $V$ in  $\ker\psi\oplus \langle \xi \rangle$, it follows that $\tilde A : = A_{| \ker\psi}$ satisfies $ \tilde A^T = - \tilde A$. The Reeb vector $\xi$ satisfies  $\psi(A(\xi)) = R(\xi, \xi) > 0$ i.e., $A(\xi)$ is of the form $c\xi$ with $c\neq 0$. We may assume that $c = 1$.  The map $A$ preserves the leaf $\langle \xi \rangle$ in the splitting $ V = \ker \psi \oplus \langle \xi \rangle$.
		 Pick $u\in \ker\psi$, and compute 
		\begin{equation}
			0 =  R(u, \xi) = R(u, A(\xi)) = R( A^T(u), \xi) = \psi(A^T(u)),
		\end{equation}
		which implies that $A^T(u)\in \ker\psi$. Since $ \tilde A^T = - \tilde A$, we derive that $A$ preserves the leaf $\ker\psi$ in the splitting $ V = \ker \psi \oplus \langle \xi \rangle$, and hence, $A$ respects the splitting $ V = \ker \psi \oplus \langle \xi \rangle$. Considering $J$ as the complex structure on the symplectic vector space $(\ker \psi, b_{|\ker \psi})$ in the usual sense. It follows from 
	the polar decomposition in the symplectic setting that $\tilde A = J (\tilde A^T \tilde A)^{1/2} = (-\tilde A^2)^{1/2}$, where $J$ and $ (\tilde A^T \tilde A)^{1/2}$
	commute, and therefore $J^2 = - Id_{\ker \psi}$.   Thus, the map $ \tilde J : V\rightarrow V$ such that  $\tilde J \circ i_0 = J$, and $\tilde J\circ j_0  = Id_{|\langle \xi \rangle}$ satisfies : 
	$$
		\left(  p_1\circ \tilde J\circ\tilde J\circ i_0\right) (u) = \left(  p_1\circ \tilde J\right)  (J(u)) =  p_1\left(  \tilde J (i_0(J(u)))\right)  = p_1\left(  J^2(u)\right) = - u,
$$
	for all $u\in \ker\psi$.  Similarly, we compute $  p_2\circ \tilde J\circ\tilde J\circ j_0  =  Id_{|\langle \xi \rangle}$. Since $A$ respects the splitting $ V = \ker \psi \oplus \langle \xi \rangle$, we compute 
	\begin{eqnarray}
		\left( \widetilde{I}_{\psi, b}(u + c'\xi)\right) (\tilde J(v + c\xi)) 		 &=& b(u, J(v)) + \psi(u + c'\xi)\psi(\tilde J (v + c\xi) ) ,\nonumber\\
		&  = &  b(u + c'\xi, \tilde A ((\tilde A^T \tilde A)^{1/2})^{-1}(v) + c\xi) \nonumber\\
		&+& \psi(u + c'\xi)\psi(\tilde A ((\tilde A^T \tilde A)^{1/2})^{-1}(v ) + c\xi)    \nonumber\\
		& =&  b(u + c'\xi,  A \left( ((\tilde A^T \tilde A)^{1/2})^{-1}(v) + c\xi\right) ) \nonumber\\
		&+& \psi(u + c'\xi)\psi( A \left( ((\tilde A^T \tilde A)^{1/2})^{-1}(v ) + c\xi\right) )  \nonumber\\
		&= & R(u + c'\xi ,  ((\tilde A^T \tilde A)^{1/2})^{-1}(v) + c\xi ) \nonumber\\
		& =& R(u,  ((\tilde A^T \tilde A)^{1/2})^{-1}(v) )  +   R(c'\xi ,  c\xi ) \nonumber\\
		& =& R((\tilde A^T \tilde A)^{-1/2}(u),  (\tilde A^T \tilde A)^{-1/2}(v) )  +   R(c'\xi ,  c\xi ) \nonumber\\
		&=&  R( (\tilde A^T \tilde A)^{-1/2}(u) + c'\xi,  (\tilde A^T \tilde A)^{-1/2}(v) + c\xi )
	\end{eqnarray}
	for all $u, v\in \ker\psi$, and all $c, c'\in \mathbb R$.  Thus, setting
	$
	\tilde g(u, v) := \left( \widetilde{I}_{\psi, b}(u)\right) (\tilde J(v)),
	$
	for all $u, v\in V$, one defines a positive definite inner product on $V$.  This construction yields a   continuous map 	$\mathcal F : Riem(V) \rightarrow \tilde {\mathcal{J} }(V, b, \psi)$.
	
\end{proof}			

\begin{proposition}
	Let $(V, b, \psi)$ be a real cosymplectic vector space. The space $ \tilde {\mathcal{J} }(V, b, \psi)$ is paths-connected.
\end{proposition}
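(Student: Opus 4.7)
The plan is to deduce path-connectedness directly from Lemma \ref{contract-1}, by exploiting the fact that $Riem(V)$ is a convex open set in the vector space of symmetric bilinear forms. Since continuous surjections send path-connected spaces to path-connected spaces, essentially nothing is left to do beyond citing the earlier lemma.

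Concretely, first I would take two arbitrary compatible co-complex structures $\tilde J_0, \tilde J_1 \in \tilde{\mathcal{J}}(V, b, \psi)$. By the surjectivity part of Lemma \ref{contract-1}, I can choose preimages $R_0, R_1 \in Riem(V)$ with $\mathcal{F}(R_i) = \tilde J_i$ for $i = 0, 1$. Next, because $Riem(V)$ is convex (positive-definiteness is preserved under positive linear combinations of inner products), the straight-line interpolation $R_t := (1-t)R_0 + t R_1$ remains in $Riem(V)$ for every $t \in [0,1]$, yielding a continuous path $[0,1] \to Riem(V)$.

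Finally, composing with the continuous map $\mathcal{F}$ from Lemma \ref{contract-1} produces a continuous path $t \mapsto \mathcal{F}(R_t)$ in $\tilde{\mathcal{J}}(V, b, \psi)$ joining $\tilde J_0$ to $\tilde J_1$, which establishes path-connectedness.

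There is no real obstacle here: the lemma did the heavy lifting by building $\mathcal F$ and verifying continuity, and $Riem(V)$ being an open convex cone in a finite-dimensional vector space is standard. The only thing to be slightly careful about is that one is genuinely allowed to convex-combine inner products (trivially true), and that the construction of $\mathcal F$ in Lemma \ref{contract-1} depends continuously on $R$, which is already recorded there. One could also remark, as a stronger statement, that $\tilde{\mathcal{J}}(V, b, \psi)$ is in fact contractible, since $Riem(V)$ is contractible and $\mathcal{F}$ is surjective and continuous, but path-connectedness is all that is asserted.
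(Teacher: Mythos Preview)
Your argument is correct and is precisely the ``verbatim repetition of similar proof from symplectic geometry'' that the paper alludes to: the surjective continuous map $\mathcal{F}: Riem(V) \to \tilde{\mathcal{J}}(V, b, \psi)$ from Lemma~\ref{contract-1}, combined with the convexity of $Riem(V)$, immediately yields path-connectedness. One small caution about your final aside: a continuous surjection from a contractible space does \emph{not} by itself force the target to be contractible (e.g.\ $[0,1]\twoheadrightarrow S^1$), so the contractibility statement in Lemma~\ref{contract-2} requires a bit more than what you sketch there; but this does not affect your proof of path-connectedness.
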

\begin{proof}
	This is a verbatim repetition of similar proof from symplectic geometry.
\end{proof}
\begin{lemma}\label{contract-2}
	Let $(V, b, \psi)$ be a real cosymplectic vector space. The space $ \tilde {\mathcal{J} }(V, b, \psi)$ is contractible.
\end{lemma}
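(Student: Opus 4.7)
The plan is to realize $\tilde{\mathcal{J}}(V,b,\psi)$ as a retract of the convex (hence contractible) space $Riem(V)$, using the surjection $\mathcal{F}$ of Lemma~\ref{contract-1} as the retraction. Since any retract of a contractible space is contractible, this would finish the proof.

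First I would define a continuous section $\iota : \tilde{\mathcal{J}}(V, b, \psi) \to Riem(V)$ by setting
$$\iota(\tilde J)(u,v) := \bigl(\widetilde{I}_{\psi, b}(u)\bigr)(\tilde J(v)).$$
This lands in $Riem(V)$ by the very definition of compatibility, and is manifestly continuous in $\tilde J$. Then I would verify $\mathcal{F}\circ \iota = Id_{\tilde{\mathcal{J}}(V,b,\psi)}$. Given a compatible $\tilde J$ and $R := \iota(\tilde J)$, the defining relation $R(u,v) = (\widetilde{I}_{\psi,b}(u))(A(v))$ from the proof of Lemma~\ref{contract-1} forces $A = \tilde J$ directly. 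On $\ker\psi$, $\tilde A := \tilde J|_{\ker\psi}$ already squares to $-Id$, and by the standard compatibility calculation it is an $R$-isometry, so $\tilde A^T = -\tilde A$ with respect to $R$. Hence $\tilde A^T \tilde A = Id$, its positive square root is $Id$, and the polar decomposition used in Lemma~\ref{contract-1} returns $J = \tilde A$. Combined with the identity behavior on $\langle \xi \rangle$ built into $\mathcal{F}$, this yields $\mathcal{F}(\iota(\tilde J)) = \tilde J$.

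Finally, since $Riem(V)$ is a convex open subset of the vector space of symmetric bilinear forms, the straight-line homotopy $H_s(R) := (1-s)R + s R_0$ contracts $Riem(V)$ onto any fixed $R_0 \in Riem(V)$. The composition $(s, \tilde J) \mapsto \mathcal{F}(H_s(\iota(\tilde J)))$ is then a continuous deformation of $\tilde{\mathcal{J}}(V,b,\psi)$ to the single point $\mathcal{F}(R_0)$, which proves contractibility.

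The main obstacle is the identity $\mathcal{F}\circ \iota = Id$: specifically, showing that the polar decomposition hidden inside $\mathcal{F}$ returns $\tilde J$ unchanged when the input metric is itself manufactured from $\tilde J$. The continuity of $\iota$ and the convexity of $Riem(V)$ are immediate, so once the retract structure is established the topological conclusion is routine.
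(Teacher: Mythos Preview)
Your proposal is correct and is precisely the standard symplectic argument the paper is alluding to when it says the proof is ``a straight adaptation of the proof of similar proof from symplectic geometry.'' The paper gives no further detail, so your write-up actually fills in what the paper leaves implicit; the only point worth double-checking against Lemma~\ref{contract-1} is the normalization $A(\xi)=\xi$, but since $\tilde J(\xi)=\xi$ by definition this holds automatically for $A=\tilde J$.
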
	
\begin{proof}
	This is a straight adaptation of the proof of similar proof from symplectic geometry.
\end{proof}

A compatible  co-complex structure on  $(V, b, \psi)$ makes $V$ into a Euclidean vector space (
=  inner product space), with Euclidean metric 
$\tilde g$.	Assume $ \dim V = 2n+ 1$, and $\xi$ is the Reeb vector.  Since $ \tilde {\mathcal{J} }(V, b, \psi) \neq \emptyset$, pick $\tilde J\in  \tilde {\mathcal{J} }(V, b, \psi)$, and fix an orthonormal sub-basis $e_1, \dots,e_n$. Set $f_i: = \tilde J e_i$:
what can we say about the basis $e_1,\dots, e_n, f_1, \dots , f_n, \xi$? 
The condition,  
$\tilde g (e_i,e_j) = \delta_{ij}$ implies that
\begin{equation}
	b(e_i, f_j) + \psi(e_i)\psi(f_j) =  \delta_{ij} = b(e_i, f_j) + \psi(e_i)\psi(e_j),
\end{equation} 

and hence,  
$\psi(e_i)\left(\psi(e_i) -\psi(f_j) \right) = 0$, for all $i\neq j$. Since $\tilde J$ is a linear cosymplectomorphism, we always have $\psi(e_i) =\psi(f_i) $. Assume that $\psi(e_i) = 0 $, for all $i$. Thus, $\psi(f_j) = 0$. Therefore, $ b(e_i, f_j) + \psi(e_i)\psi(f_j) =  \delta_{ij}$ implies $ b(e_i, f_j) =  \delta_{ij} $. Also $b(e_i, \xi) = 0 = b(\xi, f_j)$, and since the restriction of 
$ \tilde J $ to $\ker\psi$ is a compatible complex structure on the symplectic vector space $\left( \ker\psi, b_{|\ker\psi}\right) $, we derive  that $b(f_i, f_j)  = b(\tilde J e_i,\tilde J e_j) = b(e_i, e_j) = 0
$. In this case,  $e_1,\dots, e_n, f_1, \dots , f_n, \xi$ is a cosymplectic basis. Assume  $\psi(e_i) = \psi(f_j)$, for all $i\neq j$, implies that $\psi$  is a constant map on $\langle e_1,\dots, e_n, f_1, \dots , f_n \rangle$. W.l.o.g, we may assume that $ \psi(e_i) = 1$. From $  b(e_i, f_j) + \psi(e_i)\psi(f_j) =  \delta_{ij}$, we derive that  $ b(e_i, f_j) = \delta_{ij} - 1$. In  particular $b(e_i, f_i) = 0$,and $b(e_i, f_j) = -1$ for all $i\neq j$ :  $e_1,\dots, e_n, f_1, \dots , f_n, \xi$ which is not a cosymplectic basis.\\

For instance, given a Lagrangian-like subspace $L$ of $V$, any orthonormal basis $e_1, \dots,e_n$ of $ L $ yields an 
orthornormal basis  $e_1,\dots, e_n, f_1, \dots , f_n$ for $\ker\psi$ viewed as a Hermitian vector space where $f_i = \tilde J e_i$. It also seems  that the map
taking $e_1, \dots,e_n$ to an orthonormal basis $l_1, \dots,l_n$ of $ L\in \mathfrak{Co}-$Lag(V) is unitary.  Hence the unitary group $U(\ker\psi)$ seems to act
transitively on the set of Lagrangian-like subspaces. The stabilizer  of $ L\in \mathfrak{Co}-$Lag(V) in $U(\ker\psi)$
is canonically identified with the orthogonal group $O(L)$. This  suggests that the set of
Lagrangian-like subspaces of $V$ is identified with an homogeneous space: $\mathfrak{Co}-Lag(V) \cong U(\ker\psi)/O(L).$ Thus, $\mathfrak{Co}-$Lag(V) is a manifold of dimension :  $n^2 - \frac{n(n-1)}{2}  =  \frac{n(n + 1)}{2}$. 

\section{On smooth manifolds}\label{sec3}
\subsection{The $C^r$ topology on $Diff^k(M)$}\label{Cr}
Let $M$ and $N$ be two smooths manifolds and let $f\in C^r(M, N)$ with $r\leq \infty$. Let $(U,\phi)$ be a local chart on $M$ and $K$ a compact subset of $U$ such that $f(K)\subset V$ where $V$ is the domain of the chart $(V, \psi)$ on $N$. For every $l \geq 0$, we let $N^r(f, (U, \phi), (V, \psi), l, K)$ to be the set of all $g\in C^r(M, N)$ such that $g(K)\subset V$ and such that if
$ 	\bar{f}=\psi\circ f \circ \phi^{-1}$, and $\bar{g}=\psi\circ g \circ \phi^{-1} $	then  	$\parallel D^k\bar{f}(x)-D^k\bar{g}(x)\parallel \leq l,  $																										
for all $x\in \phi(K)$ and $0\leq k \leq r$.

\begin{definition}{\textbf{(Compact open $C^r-$topology)}}\cite{Hirs76}
	The sets $N^r(f, (U, \phi), (V, \psi), l, K)$ form a subbasis for a topogoly in $ C^r(M, N)$ called the \textbf{compact open $C^r$ topology}.
\end{definition}
Note that in the compact open $C^r-$topology, a neighbourhood of $f\in C^r(M, N)$ is any set which is a finite intersection of sets of the type $N^r(f, (U, \phi), (V, \psi), l, K)$.

\begin{definition}{\textbf{($C^{\infty}-$compact open topology, \cite{Hirs76})}} 
	The \textbf{$C^{\infty}$ compact open topology} is the topology induced by the inclusion  $C^{\infty}(M, N)\subset C^r(M, N)$ for $r$ finite.
\end{definition}

We defined $C^k-$compact open topology above  details on $C^0$-topology can be found in \cite{Hirs76}. When $M$ is compact, then the $C^k-$compact open topology is metrizable \cite{Hirs76} and for $k = 0$, the corresponding metric is called the \textbf{$C^0$-metric} and denoted by $d_{C^0}$.
																																																									
\subsection{Cosymplectic manifolds}																		

An almost cosymplectic structure on smooth  manifold $M$ is a pair $( \omega,\eta)$ consisting of a $2-$form  $\omega$ and a  $1-$form $\eta$  such that for each $x\in M$, the triple $ (T_xM, \omega_x, \eta_x)$ is a cosymplectic vector space.  A cosymplectic structure on $M$ is any almost cosymplectic structure $( \omega,\eta)$ on $M$ such that $d\eta = 0$ and $d\omega = 0$. We shall write  $(M, \omega, \eta)$ to mean that $M$ has a cosymplectic structure $(\omega,\eta)$. We refer to 
\cite{P-L, H-L, Tchuiaga3} and references therein. Any cosymplectic manifold $(M,\omega,\eta)$ is of odd dimension $(2n + 1)$, and is orientable with respect to the volume form 
$\eta\wedge\omega^n$. Not all odd dimensional manifolds have a cosymplectic structure \cite{Tchuiaga3}, Li \cite{H-L}. If $\Omega^1(M)$ (resp. $\chi(M)$) stands for the $C^\infty(M,\mathbb{R})-$module of all $1-$forms ( resp. all smooth vector fields) on $M$, then	the cosymplectic structure induces an isomorphism of $C^\infty(M,\mathbb{R})-$modules
\begin{equation}
	\widetilde I_{\eta, \omega}: \chi(M) \longrightarrow \Omega^1(M), X \longmapsto  \widetilde I_{\eta, \omega}(X)=\eta(X)\eta + \imath_X\omega.
\end{equation} 
The vector field $\xi:= \widetilde I_{\eta, \omega}^{-1}(\eta)$ is called the Reeb vector field of $(M, \omega,\eta)$ and is characterized by :  
$ \eta(\xi) = 1$ and $\imath_\xi\omega = 0$.

	\begin{definition}
		A map \( \tilde{J} : TM \rightarrow TM \) is called a \textbf{co-complex structure} if it satisfies the following conditions:
	\begin{enumerate}
		\item \textbf{Action on \(\ker\eta\)}:
		 Let \(X \in \ker\eta\) be a vector field such that \(\eta(X) = 0\). Then the map \(\tilde{J}\) restricted to \(\ker\eta\) should act like a complex structure, meaning:	\[
			\tilde{J}^2(X) = -X \quad \text{for all } X \in \ker\eta.
			\]
		\item \textbf{Action on \(\langle \xi \rangle\)}: Let \(\xi\) be the Reeb vector field. The map \(\tilde{J}\) should satisfy:
			\[
			\tilde{J}(\xi) = \xi.
			\]

	\end{enumerate}
	\end{definition}
	A co-complex structure \(\tilde{J}\) on \((M, \omega, \eta)\) is compatible with the cosymplectic structure if the bilinear form \(\tilde{g}\) defined by: 
	\[
	\tilde{g}(X, Y) := \omega(X, \tilde{J}(Y)) + \eta(X)\eta(Y)
	\]

	is a positive definite inner product on \(M\).

\subsection{Cosymplectomorphisms}
\begin{definition} 
	A diffeomorphism $\phi : M\longrightarrow M$ is called a  cosymplectic diffeomorphism (or cosymplectomorphism) if: $\phi^\ast(\eta)= \eta$ and $\phi^\ast(\omega)= \omega$.  
	
\end{definition}
We denote by $Cosymp_{\eta, \omega}(M)$ the group of all cosymplectomorphisms of  $(M, \omega, \eta)$. \\

Throughout the paper, we equip $ \text{Cosymp}_{\eta, \omega}(M) $ with the $C^\infty-$compact-open topology \cite{Hirs76}.
\begin{definition} 
	
	Let $(M, \omega,\eta)$ be a cosymplectic manifold. 
	A cosymplectic isotopy $\Phi =\{\phi_t\}$ is a smooth map $[0,1]\ni t\mapsto \phi_t\in Cosymp_{\eta, \omega}(M) $.
\end{definition}
We denote by $Iso_{\eta, \omega}(M)$ the group of all cosymplectic isotopies of $(M, \eta, \omega)$.

\subsection{Lagrangian-like submanifolds}	
\begin{definition}
	Let $(M,\omega,\eta)$ be a cosymplectic manifold. A submanifold $L$ of $M$ is called Lagragian-like if 
	for all $ p	\in L $ the vecror subspace $T_pL$ is a  Lagrangian-like vector subspace of  $(T_pM,\omega_p, \eta_p)$. 
\end{definition}
\begin{remark}
	In other words, a submanifold $L$ of $M$ is called Lagragian-like if  for each $p\in L$, we have  $\omega_p{_{
			\lvert T_pL}}=0$, $\eta_p{_{
			\lvert T_pL}}=0$ and $\dim T_pL=\frac{1}{2}\left( \dim T_pM - 1\right) $.
	This is equivalent to say that if  $i:L\hookrightarrow M $ is the inclusion map, then $L$ is Lagrangian-like if and only if 
	$i^*\omega=0$, 	$i^*\eta=0$ and $\dim T_pL=\frac{1}{2}\left( \dim T_pM - 1\right) $. 
\end{remark}

	\begin{example} 
	As in Example \ref{Standard-1}, let $\mathbb R^{2n + 1}$ be equipped with the global coordinate system $(x_1, \dots, x_n, y_1, \dots, y_n, z)$. The cosymplectic structure on   $\mathbb R^{2n + 1}$  is induced by the $2-$form $\omega_0 :=  \sum_{i = 1}^n  dx_i\wedge dy_i$, and $1-$form $\eta_0 : = dz$. Therefore, in the global coordinates system $((x_{i,j}),  (y_{i,j}), z_{1},\dots z_l  )$ , the 
	cosymplectic structure on  $(\mathbb R^{2n + 1})^\mathbf A$ reads: $ \eta^\mathbf A =  (\tilde\pi_{\mathbb R^{2n + 1}})^\ast(\eta_0) = \sum_{j = 1}^l d z_j,
	$ and 
	$ \omega^\mathbf A =  (\tilde\pi_{\mathbb R^{2n + 1}})^\ast(\omega_0 ) =  \sum_{j = 1}^l \left( \sum_{i = 1}^n dx_{i,j} \wedge dy_{i,j}\right).$ Assume $ l = 2c + 1$, and define the following submanifolds. The submanifold $L_1$ of $(\mathbb R^{2n + 1})^\mathbf A$ consists of elements  
	$((x_{i,j}),  (y_{i,j}), z_{1},\dots z_l  )$ such that 
	$ x_{i,j} = y_{i,j}$ for all $i,j$, and if  $ l = 2c + 1$, then $ z_{c + k} =  -z_{k}$ for $ k = 1,\dots, c$ and $z_{l} = 0$; 
	and the submanifold $L_2$ of $(\mathbb R^{2n + 1})^\mathbf A$ consists of elements 
	$((x_{i,j}),  (y_{i,j}), z_{1},\dots z_l  )$
	such that 
	$ 0= y_{i,j}$ for all $i,j$,  $ z_{c + k} =  -z_{k}$ for $ k = 1,\dots, c$ and $z_{l} = 0$. Then, $L_1$ and $L_2$ 
	are Lagrangian-like submanifolds of $(\mathbb R^{2n + 1})^\mathbf A$ of dimension 
	$nl + \frac{ l -1}{2}$.

\end{example}

\begin{example} 
		Consider a 5-dimensional cosymplectic manifold \( (M, \omega, \eta) \) with the following structure:
	 \(M = \mathbb{R}^5\),
		 \(\omega = dx_1 \wedge dx_2 + dx_3 \wedge dx_4\), and 
		 \(\eta = dx_5\). Define a 2-dimensional submanifold \( L \) of \( M \) by the following parametrization: 
	\[
	L = \{ (x_1, x_2, 0, 0, f(x_1, x_2)) \mid x_1, x_2 \in \mathbb{R} \},
	\]
	where \( f(x_1, x_2) \) is a smooth function of \( x_1 \) and \( x_2 \). At any point \( p = (x_1, x_2, 0, 0, f(x_1, x_2)) \in L \), the tangent space \( T_pL \) is spanned by the vectors: 
	\[
	\frac{\partial}{\partial x_1} + \frac{\partial f}{\partial x_1} \frac{\partial}{\partial x_5}, \quad \text{and} \quad \frac{\partial}{\partial x_2} + \frac{\partial f}{\partial x_2} \frac{\partial}{\partial x_5}.
	\]
	For vectors \( X, Y \in T_pL \), we need to check that \(\omega_p(X, Y) = 0\).
	Since \( \omega = dx_1 \wedge dx_2 + dx_3 \wedge dx_4 \), we see that for the vectors in \( T_pL \), \(\omega_p(X, Y) = 0\) because there are no components in the \(x_3\) or \(x_4\) directions. 
	The Reeb vector field \(\xi\) is defined such that \(\eta(\xi) = 1\) and \(\iota_\xi \omega = 0\). Here, \(\xi = \frac{\partial}{\partial x_5}\).
	The submanifold \( L \) does not contain the Reeb vector field, as the presence of \( x_5 \) does not violate the condition. 
	The orthogonal complement \( T_pL^{\perp} \) in \( T_pM \) must satisfy \( T_pL^{\perp} = \langle \xi \rangle \oplus T_pL \).
	For our example, \( T_pL^{\perp} \) is spanned by \(\frac{\partial}{\partial x_3}\), \(\frac{\partial}{\partial x_4}\), and any orthogonal component involving \(x_5\), and the sum \( \langle \xi \rangle \oplus T_pL \) reconstructs the tangent space \( T_pM \).
	Therefore, the submanifold \( L \) satisfies the conditions to be Lagrangian-like in the cosymplectic manifold \( (M, \omega, \eta) \). we've defined a 2-dimensional submanifold $L$ 
	in the 5-dimensional cosymplectic manifold \( (M, \omega, \eta) \)
	that is not simply a suspension of a Lagrangian submanifold from a lower dimension. For a compact Lagrangian-like, consider the 5-dimensional torus \( T^5 = S^1 \times S^1 \times S^1 \times S^1 \times S^1 \) with coordinates \( (x_1, x_2, x_3, x_4, x_5) \) where each \( x_i \) ranges over \( [0, 2\pi) \).
	Define the cosymplectic forms as follows:
	$
	\omega = dx_1 \wedge dx_2 + dx_3 \wedge dx_4,
	$ and 
	$
	\eta = dx_5.
	$
	These forms are closed and define a cosymplectic structure on \( T^5 \). 
	Define a 2-dimensional submanifold \( L \) of \( T^5 \) by: 
	\[
	L = \{ (x_1, x_2, 0, 0, k(x_1, x_2)) \mid x_1, x_2 \in [0, 2\pi) \},
	\]
	where \( k(x_1, x_2) \) is a smooth function of \( x_1 \) and \( x_2 \). 
\end{example}

The following results are motivated  by Weinstein's symplectic creed states that  " everything is a Lagrangian submanifold" (see \cite{AWein2}). 			

\begin{lemma}
	If $ \phi : (M, \omega_1, \eta_1) \rightarrow (N, \omega_2, \eta_2) $ is a  cosymplectomorphism, then the image $\phi(L)$ of a Lagrangian-like submanifold of $M$ is a Lagrangian-like submanifold of $N$.\medskip
	
\end{lemma}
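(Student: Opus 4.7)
The plan is to verify the three defining conditions of a Lagrangian-like submanifold for $\phi(L)$, using the fact that $\phi$ is a diffeomorphism that pulls back the cosymplectic structure on $N$ to the one on $M$. First I would observe that because $\phi$ is a diffeomorphism and $L\subset M$ is an embedded submanifold, the image $\phi(L)$ is automatically an embedded submanifold of $N$, and $\phi|_L : L \to \phi(L)$ is a diffeomorphism. In particular $\dim \phi(L) = \dim L = \frac{1}{2}(\dim M - 1) = \frac{1}{2}(\dim N - 1)$, so the dimension requirement from the Remark immediately after the definition of Lagrangian-like submanifold is satisfied.

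Next I would check that the pullback of $\omega_2$ and $\eta_2$ to $\phi(L)$ vanish. Writing $i : L\hookrightarrow M$ and $j : \phi(L) \hookrightarrow N$ for the inclusions, the obvious relation $\phi\circ i = j\circ (\phi|_L)$ gives, by functoriality of pullback,
\begin{equation}
(\phi|_L)^* (j^*\omega_2) = i^*(\phi^*\omega_2) = i^*\omega_1 = 0,
\end{equation}
where the last equality is the hypothesis that $L$ is Lagrangian-like in $(M,\omega_1,\eta_1)$, and the middle one uses $\phi^*\omega_2 = \omega_1$. Since $\phi|_L$ is a diffeomorphism, $(\phi|_L)^*$ is injective on forms, and so $j^*\omega_2 = 0$. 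The identical argument with $\eta$ in place of $\omega$ gives $j^*\eta_2 = 0$.

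Combining the vanishing of $j^*\omega_2$, $j^*\eta_2$ with the half-dimensionality $\dim \phi(L) = \frac{1}{2}(\dim N - 1)$, the Remark characterizing Lagrangian-like submanifolds lets us conclude that $\phi(L)$ is Lagrangian-like in $(N,\omega_2,\eta_2)$. I do not anticipate any genuine obstacle here: the result is essentially formal, an instance of the naturality of pullback together with the definition of a cosymplectomorphism. If anything, the only point worth mentioning is the standard fact that $\phi|_L$ is a diffeomorphism onto its image, so that one really can cancel it from the pulled-back equation.
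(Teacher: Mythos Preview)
Your proposal is correct and follows essentially the same approach as the paper: both arguments use the commuting square formed by the inclusions $i:L\hookrightarrow M$, $j:\phi(L)\hookrightarrow N$ and the diffeomorphism $\phi$ to reduce $j^\ast\omega_2$ and $j^\ast\eta_2$ to $i^\ast\omega_1$ and $i^\ast\eta_1$, together with the dimension count. The only cosmetic difference is that the paper writes $j=\phi\circ i\circ \phi^{-1}$ and pulls back directly, whereas you write $\phi\circ i = j\circ(\phi|_L)$ and cancel the diffeomorphism $(\phi|_L)^\ast$; these are two readings of the same commutative diagram.
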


\noindent \textbf{Proof.} 
The map $\phi_{\lvert L}:L\rightarrow \phi(L)$ is a diffeomorphism, $\phi(L)$ is then a submanifold of  $N$  of dimension 
$\frac{\dim M - 1}{2}$. Consider the inclusion maps $i:L\hookrightarrow M$ and $j:\phi(L)\hookrightarrow N$. We have $j=\phi \circ i\circ \phi^{-1}$, thus 
$$j^*\omega_2 	 =  (\phi \circ i \circ \phi^{-1})^*\omega_2 =
	 [(\phi^{-1})^*\circ i^*]\phi^*\omega_2 =
	  (\phi^{-1})^*(i^*\omega_1) = 0,$$ 
since $i^*\omega=0$. On the other hand, we have 
$$
	j^*\eta_2	 = (\phi \circ i \circ \phi^{-1})^*\eta_2 =
	 [(\phi^{-1})^*\circ i^*]\phi^*\eta_2  =
	  (\phi^{-1})^*(i^*\eta_1) = 0,$$ 
since $i^*\eta_1=0$. $\square$

\subsection{The graph of a closed $1-$form and Lagrangian-like submanifolds}
Let  $N$ be a smooth manifold and $T^*N$ be its cotangent bundle equipped with its standard symplectic form $\omega = d\alpha$, and let  
$\beta$ to be a closed  $1-$form on  sur $N$. 
\begin{proposition}\label{Prop0} A subspace $L \subset T^*N\times\mathbb R$  is a  Lagrangian-like submanifold   if and only if the subset $q(L)$ is countable and there exists a closed $1-$form $\beta$ on $ N$ such that $p(L) = \{(x, \beta_x): x\in N \} $  where $p :T^*N\times\mathbb R\rightarrow T^*N$ and  $ q :T^*N\times\mathbb R\rightarrow \mathbb R$ are projections.
\end{proposition}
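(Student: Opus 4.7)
The plan is to exploit the product decomposition: on $T^*N\times\mathbb{R}$ the natural cosymplectic structure is $(p^*\omega, q^*dt)$, with Reeb vector $\partial/\partial t$. I will handle the two implications by separating ``horizontal'' information about $p(L)\subset T^*N$ from ``vertical'' information about $q(L)\subset \mathbb{R}$, and in each case reduce the Lagrangian-like condition to the classical fact that the graph of a $1$-form $\beta$ in $T^*N$ is symplectically Lagrangian if and only if $d\beta=0$.

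For the forward direction, let $i:L\hookrightarrow T^*N\times\mathbb{R}$ be the inclusion. First I would observe that $i^*(q^*dt) = d(q\circ i)=0$, so $q|_L$ is locally constant; since every manifold is second-countable, $L$ has at most countably many connected components, whence $q(L)$ is countable. Fix a component $L_0$: it lies in a level slice $T^*N\times\{c\}$, on which $p$ restricts to a diffeomorphism onto $T^*N$. The equality $i^*(p^*\omega)=0$ then exhibits $p(L_0)$ as an isotropic immersed submanifold of $T^*N$, and the dimension equality $\dim L_0 = n = \tfrac{1}{2}\dim T^*N$ upgrades it to a Lagrangian one. Under the further assumption that $p(L)$ is the image of the section $s_\beta:x\mapsto(x,\beta_x)$, the tautological identity $s_\beta^*\alpha=\beta$ yields $d\beta=s_\beta^*(d\alpha)=0$.

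Conversely, suppose $q(L)$ is countable and $p(L) = \{(x,\beta_x):x\in N\}$ with $d\beta=0$. A continuous map from a connected space to a countable subset of $\mathbb{R}$ is constant, so $q$ is constant on each component of $L$ and hence $i^*(q^*dt)=0$. Each component then embeds, via $p$, into the slice $\{(x,\beta_x):x\in N\}\times\{c\}\subset T^*N\times\mathbb{R}$, and the surjectivity hypothesis $p(L)=\{(x,\beta_x):x\in N\}$ forces $\dim L = n$. The closedness of $\beta$ together with $s_\beta^*\alpha=\beta$ makes the graph of $\beta$ Lagrangian in $T^*N$, so $i^*(p^*\omega)$ already vanishes. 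Combined, this gives the Lagrangian-like condition for $L$.

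The hard part will be the bookkeeping around disconnectedness: one must carefully verify that a Lagrangian-like $L$ genuinely decomposes slice-by-slice over distinct values of $t$, and that ``$p(L)$ is the full graph of $\beta$'' encodes enough surjectivity on the converse side to pin down $\dim L$. Once these structural points are granted, the heart of the argument is the classical tautological-form computation $s_\beta^*(d\alpha)=d\beta$.
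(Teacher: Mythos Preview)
Your approach is correct and parallels the paper's: both separate the $p$- and $q$-directions, deduce countability of $q(L)$ from $i^*(q^*dt)=0$, and reduce the $p$-direction to the classical fact that a section $s_\beta$ of $T^*N$ has Lagrangian image iff $d\beta=0$. The technical routes differ slightly: where the paper pulls the conditions back along explicit sections $f_0$ of $p$ and $s_{\theta_0}$ of $q$, you argue more directly via $d(q\circ i)=0$ and the tautological identity $s_\beta^*\alpha=\beta$; your treatment of connected components is also more explicit than the paper's. One point worth noting: you honestly flag the ``further assumption'' that $p(L)$ is already the image of a section --- the paper simply asserts at the outset that $L$ is of the form $(\alpha,t)$ with $\alpha$ a section, without justification. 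Neither proof closes this gap, so the proposition is best read with that section-like hypothesis implicit; with that understanding your argument is complete and arguably cleaner.
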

\begin{proof}
	The set $L \subset T^*N\times\mathbb R$  is of the form $ L = (\alpha, t)$ where $\alpha$ is a section of  $ T^*N$, and then $p(L)$ is the graph of the $1-$form  $\alpha$. For $L \subset T^*N\times\mathbb R$ to be  a  Lagrangian-like submanifold  means $p^\ast(d\lambda)|_L = 0 $, and $ q^\ast(du)|_L = 0$. Let $f_0$ be a trivial linear section of the projection $p$, and derive from $p^\ast(d\lambda)|_L = 0 $ that 
	$\left( f_0|_{p(L)}\right)^\ast (p^\ast(d\lambda))|_{p(L)\times\{0\}} = 0 $, i.e., $ d\lambda|_{p(L)} = 0 $. So, $p(L)$ is a Lagrangian submanifold of $T^\ast N$. Similarly, $ q^\ast(du)|_L = 0$, implies that for each fixed $\theta_0\in T^\ast N$, with the section $s_{\theta_0}$ of $q$,  we have \\
	$\left( s_{\theta_0}|_{q(L)}\right)^\ast (q^\ast(d u))|_{\{\theta_0\}\times q(L)} = 0 $, i.e., $ d u = 0 $ for all $ u\in q(L)$ : that is,  $q(L) $ is countable. Vise-versa, assume  $L \subset T^*N\times\mathbb R$  is a submanifold  so that there exists a closed $1-$form $\beta$ on $ N$ with $p(L) = \{(x, \beta_x): x\in N \} $  and $q(L)$ is countable. For each $t\in q(L)$, let $f_t$ be the corresponding section of $p$ w.r.t to $t\in \mathbb R$. From the assumption  $d\lambda_{| p(L)}  = 0$, one derives that $p^\ast(d\lambda_{| p(L)})_{|f_t(p(L))} = 0$, for each $t$, i.e.,  $p^\ast(d\lambda)_{|p(L)\times q(L)} = 0$. Hence, $p^\ast(d\lambda)_{|L} = 0$ because $L \subseteq p(L)\times q(L)$. Since $q(L)$ is countable, we have $du_{| q(L)} = 0$, which implies $q^\ast(du)_{|p(L)\times q(L)} = 0$. Note that $  \dim (p(L))\leq \dim L \leq \dim (p(L)\times q(L))\leq   \dim (p(L)) + 0$.
\end{proof}

\begin{lemma}
	Let $\phi : (M, \omega_1, \eta_1) \rightarrow (N, \omega_2, \eta_2)$ be a diffeomorphism. Then, 
	$\phi$ is a cosymplectomorphism if and only if its graph $\Gamma_\phi=\{(p,\phi(p))\ \lvert\ p\in M\}$ is a Lagrangian-like submanifold of the cosymplectic manifold $(M\times N\times \mathbb R,
	\omega, \eta)$, with $\omega :=\pi^*_1\omega_1-\pi^*_2\omega_2, \eta :=\pi^*_1\eta_1-\pi^*_2\eta_2$, where $\pi_1$, $\pi_2$ are canonical projections from $M\times N\times \mathbb R$ onto 
	$M$, $N$ respectively.
\end{lemma}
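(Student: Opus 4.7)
The plan is to reduce both implications to a single pullback identity and then invoke the dimension criterion for Lagrangian-like subspaces from Proposition \ref{Prop-1}. First I would parametrise the graph by the smooth map
\begin{equation*}
\iota : M \longrightarrow M \times N \times \mathbb R, \qquad p \longmapsto (p, \phi(p), 0),
\end{equation*}
which is an embedding since $\phi$ is a diffeomorphism. Thus $\Gamma_\phi$ is a submanifold of dimension $\dim M = 2n+1 = \tfrac{1}{2}(\dim(M\times N\times \mathbb R) - 1)$, exactly the dimension required of a Lagrangian-like submanifold of the ambient $(4n+3)$-manifold.

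Next I would compute the pullbacks. Using $\pi_1 \circ \iota = \mathrm{id}_M$ and $\pi_2 \circ \iota = \phi$, a direct calculation yields
\begin{equation*}
\iota^\ast \omega \;=\; \omega_1 - \phi^\ast \omega_2, \qquad \iota^\ast \eta \;=\; \eta_1 - \phi^\ast \eta_2.
\end{equation*}
Hence $\iota^\ast \omega = 0$ and $\iota^\ast \eta = 0$ hold simultaneously if and only if $\phi^\ast \omega_2 = \omega_1$ and $\phi^\ast \eta_2 = \eta_1$, i.e., if and only if $\phi$ is a cosymplectomorphism.

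From this, both implications follow with very little extra work. For the forward direction, assuming $\phi$ is cosymplectic, the vanishing of $\iota^\ast \omega$ and $\iota^\ast \eta$ makes $\Gamma_\phi$ cosymplectically isotropic at every point; the matching dimension count combined with Proposition \ref{Prop-1} (items 2 and 5) upgrades this automatically to Lagrangian-like pointwise, and the non-containment of the Reeb vector is free by Proposition \ref{Prop-1} (item 3). For the converse, if $\Gamma_\phi$ is Lagrangian-like, then the remark following the definition of Lagrangian-like submanifolds forces $\iota^\ast \omega = 0$ and $\iota^\ast \eta = 0$, and the identities above recover $\phi^\ast \omega_2 = \omega_1$ and $\phi^\ast \eta_2 = \eta_1$. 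The only genuine calculation is the pullback formula, so I anticipate no serious obstacle beyond bookkeeping.
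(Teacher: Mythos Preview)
Your argument is correct and follows essentially the same route as the paper: parametrise the graph by $p\mapsto (p,\phi(p),0)$, use $\pi_1\circ\iota=\mathrm{id}_M$ and $\pi_2\circ\iota=\phi$ to get $\iota^\ast\omega=\omega_1-\phi^\ast\omega_2$ and $\iota^\ast\eta=\eta_1-\phi^\ast\eta_2$, and combine this with the dimension count. The only difference is cosmetic: you spell out explicitly the appeal to Proposition~\ref{Prop-1} to pass from ``isotropic of the right dimension'' to ``Lagrangian-like'', whereas the paper leaves that step implicit.
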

\begin{proof}
	The map $\psi:M\rightarrow M\times N$, $p\mapsto(p,\phi(p))$ is a diffeomorphism onto
	$\psi(M)=\Gamma_\varphi$, and $\psi$ it is an immersion, hence an embedding. The graph of  $\phi$ is a submanifold of dimension  $\dim M$ of $M\times N \times \mathbb R$. Hence $\Gamma_\phi$ is Lagrangian-like, if and only if,
	$\psi^*\omega=0$ and $\psi^*\eta= 0$ ($\psi$ can be seen as an inclusion of $\Gamma_\phi$ into $M\times N\times \mathbb R$). We can compute :
	
	\begin{tabular}{rl} 
		$\psi^*\omega=$ & $\psi^*\pi^*_1\omega_1-\psi^*\pi^*_2\omega_2$ \\ $=$ & $(\pi_1\circ \psi)^*\omega_1-(\pi_2\circ \psi)^*\omega_2$ \\
		$=$ & $\omega_1-\phi^*\omega_2$ 
	\end{tabular} 
	
	since $\pi_1\circ \psi=Id_{M}$ and $\pi_2\circ \psi=\phi.$ Similarly, one shows that  $\psi^*\eta= 0.$
\end{proof}

\begin{remark}
	The graph of the identity map $id_M$ denoted $\Delta=\{(p,p)\ \lvert\ p\in M\}$ is a Lagrangian-like submanifold of  $M\times M\times \mathbb R$ called diagonal, and for all diffeomorphism  $\varphi$, the set 
	$\Gamma_\varphi \cap \Delta=\{(p,\varphi(p))\ \lvert\ \varphi(p)=p,\ p\in M\}$ consists of fixed points of $\varphi$.  
\end{remark}

\subsection{Neighborhood of a Lagrangian-like submanifold}
In this subsection, we study the local geometry of Lagrangian-like submanifolds.  

\subsection{ The normal space to a submanifold :}
Let $M$ be a smooth manifold of dimension  $n$, $X$ be a submanifold of  $M$ of dimension $k$ and  $i:X\hookrightarrow M$ 
be the inclusion of $X$ in $M$.  We know that for each $x\in M$, the tangent space $T_x X$ can be seen as a vector subspace of  $T_xM$ through the  linear map $d_xi:T_xX\hookrightarrow T_xM$. The  quotient $N_xX:=T_xM/T_xX$ is a vector space of 
dimension $(n-k)$, called the normal space to $X$ at $x$. Dual to the inclusion $di : TX \rightarrow TM$ 
there is a surjective vector bundle map,
$i^\ast = (di)^\ast : TM^\ast \rightarrow TX^\ast$. The subspace $\ker((di)^\ast)$ (i.e. the pre-image of the zero section $X\subset  T^\ast M$) consists of
co-vectors that vanish on all tangent vectors to $X$. The the co-normal bundle to $X$, denoted $NX^\ast$ is defined as $NX^\ast : =  \ker((di)^\ast)$.
\begin{definition}
	Let $M$ be a smooth manifold and  $X$ a submanifold of  $M$. The normal bundle to $X$ is the set   $NX :=\{(x,v)\ \lvert\ 
	x\in X,\ v\in N_xX\}.$
\end{definition}

The set $NX$ has the structure of a vector bundle over $X$ of rank  $n-k$ with respect to the natural projection   $\pi:NX\rightarrow X$:  a structure of smooth manifold of dimension $n$. The trivial section  $N_0=\{(x,0_x)\ \lvert\ x\in X\}$ 
of $NX$ identified to  $X$ enables us to see $X$ as a closed submanifold of  $NX$, via the embedding  $i_0:X\hookrightarrow NX$,
$x\mapsto(x,0_x)$.\medskip

\begin{definition}
	
	A  neighborhood $\mathcal{U}_0$ of the trivial section $X$ in $NX$ is said to be convex if  the intersection $\mathcal{U}_0\cap NX$ is a convex space.
\end{definition}
\begin{remark}
	
	On a
	convex neighborhood $U$ of $X$ in $NX$ we can set
	$	\psi^0(x, v) = (x, 0)$, $ \psi^1(x, v) = (x, v)$, and $\psi^t(x, v) = (x, tv)$ for $0 \leq t \leq 1$
	and get a smooth retraction $\varphi : U_0 \times [0, 1] \rightarrow U_0$ of $U_0$ onto $X$. Let $Z_t$ be the
	time-dependent vector field on $U_0$ such that $ \frac{d}{dt}\psi^t = Z_t\circ \psi^t$.
	Given a smooth form  $\alpha$ on $U_0$ 
	we have the homotopy formula : 
	$ \alpha - (\psi^0)^\ast(\alpha)  =  I d\alpha + d I\alpha,$ 
	where $I\alpha : = \int_0^1(\psi^u)^\ast\left(\iota(Z_u) \alpha\right)du$. This is the homotopy formula in a
	convex neighborhood of $X$ in $NX$. If $\alpha$  is closed and $(\psi^0)^\ast(\alpha) = 0 $, then 
	$ \alpha  =  d I\alpha$. 
	Moreover, from $\psi^t(x, 0) = (x, 0)$ for every $ t$, we derive that $Z_t|X = 0$, and hence 
	$I\alpha| X $ is trivial as well.

\end{remark}

\begin{example} {\bfseries(Construction of a convex neighborhood $U_0$ of $X$ in $NX$ and a diffeomorphism
		from $U_0$ to a neighborhood $U$ of $X$ in $M$).} \label{RMK}
	Fix a Riemann metric $g$ on $M$, the 
	vector space $N_xX$ is identified
	with the orthogonal complement $\{v \in T_xM : g(v, u) = 0, \forall u\in T_xX\}$. Consider $NX^\epsilon : = \{ (x,v) \in NX : g_x^{1/2}(v, v) < \epsilon\}$:
	a convex neighborhood of $X$ in $NX$.   The exponential map $exp: NX^\epsilon\rightarrow  M$  sends
	$(x; v)$ to $\gamma(1) $ where $\gamma  : [0; 1] \rightarrow M$  is a geodesic curve starting from $x$ whose tangent at time wero is $v$. If
	$X$ is a compact submanifold of $M$, then for $\epsilon$ small enough, the map
	$exp$ is well defined. Then exp maps $NX^\epsilon$ diffeomorphically to a neighborhood $U_0$ of $X$ in $M$, and is the identity on the zero section $X$. 
	If X is not compact, replace $\epsilon$ by a continuous map from $X$ to $\mathbb R^+$  that
	tends to zero fast enough as $x$ tends to infinity. This is the tubular
	neighborhood theorem.
\end{example}
\medskip 
We need the following well-known result from differential geometry.
\medskip 																										\begin{theorem}
	
	{\bf (Whitney's extension theorem)} ( \cite{AWein}\label{thm02}). 
	Let  $M$ be a $n-$dimensional smooth manifold, $X$ be a $k-$dimensional compact submanifold of  $M$, $NX$ the normal bundle to $X$ in $M$, $i_0:X
	\hookrightarrow NX$ the trivial section, and $i:X\hookrightarrow M$ the inclusion map. Then, there exists a convex  neighborhood  $\mathcal{U}_0$ of 
	$X$ in $NX$, a  neighborhood $\mathcal{U}$ of $X$ in $M$ and a diffeomorphism $\psi:\mathcal{U}_0\rightarrow \mathcal{U}$ such that $\psi_{\lvert X}=id_X$.
\end{theorem}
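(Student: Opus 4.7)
The plan is to prove this by constructing the tubular neighborhood diffeomorphism explicitly via the exponential map associated with an auxiliary Riemannian metric, as sketched in Example \ref{RMK}. First I would fix any smooth Riemannian metric $g$ on $M$ (which exists via a partition of unity argument on any smooth manifold). Using $g$, identify the fibre $N_xX = T_xM/T_xX$ with the orthogonal complement $(T_xX)^\perp \subset T_xM$, so that $NX$ becomes a smooth subbundle of $TM|_X$. For each $\epsilon : X \to \mathbb{R}^+$ continuous, set
\[
NX^\epsilon := \{(x,v) \in NX : g_x(v,v)^{1/2} < \epsilon(x)\},
\]
an open, fibrewise convex neighborhood of the zero section.

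Next I would define the map $\psi := \exp|_{NX^\epsilon} : NX^\epsilon \to M$, sending $(x,v)$ to $\gamma_{x,v}(1)$ where $\gamma_{x,v}$ is the geodesic of $g$ with $\gamma_{x,v}(0) = x$ and $\dot\gamma_{x,v}(0) = v$. Since $X$ is compact, standard ODE theory for the geodesic spray on $TM$ guarantees that for some continuous $\epsilon_0$ the map $\exp$ is defined and smooth on $NX^{\epsilon_0}$, and $\psi|_X = \mathrm{id}_X$ because $\exp(x,0) = x$. The key infinitesimal computation is that the differential $d_{(x,0)}\psi : T_{(x,0)}NX \cong T_xX \oplus N_xX \to T_xM$ equals the canonical identification; on $T_xX$ it is the identity (since $\psi$ fixes $X$), and on $N_xX$ it is the derivative $v \mapsto \frac{d}{dt}|_{t=0}\exp(x,tv) = v$. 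Thus $d_{(x,0)}\psi$ is a linear isomorphism for every $x \in X$.

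By the inverse function theorem, each point of $X \subset NX^{\epsilon_0}$ has an open neighborhood on which $\psi$ restricts to a diffeomorphism onto an open subset of $M$. The main obstacle is upgrading this \emph{local} diffeomorphism property to a \emph{globally injective} one on a full tube around $X$. I would handle this by a standard compactness-contradiction argument: suppose no uniform $\epsilon$ works, then one can extract sequences $(x_n, v_n), (x'_n, v'_n) \in NX^{1/n}$ with $(x_n,v_n) \neq (x'_n,v'_n)$ but $\psi(x_n,v_n) = \psi(x'_n,v'_n)$. Since $X$ is compact, pass to convergent subsequences $x_n \to x_\infty$, $x'_n \to x'_\infty \in X$; since $v_n, v'_n \to 0$ and $\psi(x_n,v_n) \to x_\infty$, $\psi(x'_n,v'_n) \to x'_\infty$, we get $x_\infty = x'_\infty$, contradicting the local injectivity of $\psi$ around $(x_\infty, 0)$. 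Hence there is a continuous $\epsilon : X \to \mathbb{R}^+$ such that $\psi$ is injective on $NX^\epsilon$.

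Finally, set $\mathcal{U}_0 := NX^\epsilon$ and $\mathcal{U} := \psi(\mathcal{U}_0)$. Then $\psi : \mathcal{U}_0 \to \mathcal{U}$ is a smooth bijection whose differential is everywhere an isomorphism (shrinking $\epsilon$ further if necessary, using that the set where $d\psi$ is invertible is open and contains the compact zero section $X$), hence a diffeomorphism by the inverse function theorem applied pointwise. Convexity of $\mathcal{U}_0$ in each fibre of $NX$ is automatic from the definition of $NX^\epsilon$, and $\psi|_X = \mathrm{id}_X$ by construction, completing the proof. The compactness of $X$ is used in exactly two places — uniform existence of geodesics and the global-injectivity argument — and both are essential; without it one must allow $\epsilon$ to decay at infinity, as noted in Example \ref{RMK}.
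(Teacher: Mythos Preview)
Your proof is correct and follows the standard tubular neighborhood construction via the exponential map of an auxiliary Riemannian metric. Note that the paper does not actually supply a proof of this statement: it is quoted as a well-known result from differential geometry with a citation to \cite{AWein}, and the construction you carry out is precisely the one the paper sketches informally in Example~\ref{RMK}. So your argument is consistent with, and a rigorous fleshing-out of, what the paper presents.
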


\subsection{ Normal subspace of a cosymplectic manifold.}

Assume $(M, \omega, \eta)$ is a cosymplectic manifold 
of dimension $(2n + 1)$ with Reeb vector field $\xi$, and $X$ is a Lagrangian submanifold of $M$. Then, 
$(T_xM,\omega_x, \eta_x)$ is a cosymplectic vector space  of dimension $(2n + 1)$ and  $T_xX$ a Lagrangian subspace of $ T_xM$. Define a map $ I'_x:(T_xM/T_xX)\times (T_xX)^{\omega_x, \eta_x}  \rightarrow \mathbb{R},$ as : for $[v]\in T_xM/T_xX$, and $u\in  (T_xX)^{\omega_x, \eta_x},$ we have  
$
	I'_x([v],u)= \omega(v,u) + \eta(u)\eta(v).
$
Let 
also consider the linear map $I^*_x:T_xM/T_xX \rightarrow \left(\left(  T_xX\right)^{\omega_x, \eta_x} \right) ^\ast$ defined as $ I^*_x([v])= I'_x([v],.)$. We claim that $ I^*_x$ is injective: Pick $[v]\in \ker I^*_x$, then $\omega(v,u) + \eta(u)\eta(v) =0, $ $ \forall\ u \in (T_xX)^{\omega_x, \eta_x}$, i.e. 
$v\in \left( (T_xX)^{\omega_x, \eta_x}\right)^{\omega_x, \eta_x}  = T_xX $ :  $[v]=0$. Since 
\begin{equation}
	\dim(T_xM/T_xX) = \dim T_xM-\dim T_xX = n + 1 = \dim [\left(\left(  T_xX\right)^{\omega_x, \eta_x} \right) ^\ast ],
\end{equation}
the rank theorem insure us that  $I^*_x$ is a bijection, hence an isomorphism. Thus, we have the identification 
$
	T_xM/T_xX \simeq T^*_xX\oplus \left( span \left(\xi_x \right)\right)^\ast .\square					
$

\begin{proposition} \label{thm00} 
	Let  $(M, \omega, \eta)$ be a cosymplectic manifold
	of dimension $(2n + 1)$ with Reeb vector field $\xi$, and $X$ be a Lagrangian-like submanifold of $M$. Then, the bundles  $NX$ and $T^*X \oplus span \left(\xi\right)^\ast $ 
	are canonically identified.
	
\end{proposition}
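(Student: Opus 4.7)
The plan is to promote the pointwise isomorphism $I^{\ast}_x$ constructed in the paragraph immediately preceding the statement to a smooth bundle isomorphism over $X$. Since $X$ is Lagrangian-like we have $\eta|_{TX}=0$ while $\eta(\xi)=1$, so $\xi$ is nowhere tangent to $X$ and $\mathrm{span}(\xi)|_X$ is a smooth nowhere-vanishing line subbundle of $TM|_X$; in particular both $T^{\ast}X$ and $\mathrm{span}(\xi|_X)^{\ast}$ are genuine bundles over $X$, and the right-hand side has rank $n+1$, matching $\mathrm{rk}(NX)=2n+1-n=n+1$.

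First I would assemble the pointwise formula into a smooth bundle map. The assignment
\[
(x,v,u)\;\longmapsto\; \omega_x(v,u)+\eta_x(u)\eta_x(v),\qquad v\in T_xM,\; u\in (T_xX)^{\omega_x,\eta_x},
\]
is smooth because $\omega$ and $\eta$ are smooth, and by the pointwise check already carried out it descends in the first variable to the quotient $NX = TM|_X / TX$. Currying the second variable produces a smooth bundle morphism $I^{\ast}:NX\to\bigl((TX)^{\omega,\eta}\bigr)^{\ast}$. For this to make sense I need $(TX)^{\omega,\eta}$ to be a smooth subbundle of $TM|_X$, and this is where I expect the only genuine technicality: I would invoke Proposition \ref{Ortho-2}, which fixes its fiber dimension to the constant value $n+1$, so that $(TX)^{\omega,\eta}$ is a smooth vector subbundle of constant rank along $X$.

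Second, the Lagrangian-like condition gives the splitting $(T_xX)^{\omega_x,\eta_x}=T_xX\oplus\langle\xi_x\rangle$ at every point, and this decomposition varies smoothly with $x$ since $TX$ is a smooth subbundle and $\xi$ is a smooth global section. Dualizing yields a canonical smooth bundle isomorphism $((TX)^{\omega,\eta})^{\ast}\cong T^{\ast}X\oplus \mathrm{span}(\xi|_X)^{\ast}$. Finally, the computation preceding the statement shows that each $I^{\ast}_x$ is injective, and by the dimension match it is a fiberwise isomorphism; a smooth bundle morphism which is a fiberwise isomorphism is a bundle isomorphism, so composing $I^{\ast}$ with the dual splitting above produces the advertised identification $NX\simeq T^{\ast}X\oplus \mathrm{span}(\xi)^{\ast}$. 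No choice beyond the cosymplectic data $(\omega,\eta)$ and the inclusion $X\hookrightarrow M$ enters the construction, which is what justifies the word ``canonical'' in the statement.
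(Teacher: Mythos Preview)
Your proposal is correct and follows the same approach as the paper: both promote the pointwise isomorphism $I^{\ast}_x$ established in the preceding paragraph to a bundle isomorphism. The paper's proof is in fact a single sentence (``Since $N_xX\simeq T^*_xX\oplus(\mathrm{span}(\xi_x))^\ast$ for all $x$, then $NX\simeq T^*X\oplus\mathrm{span}(\xi)^\ast$''), so your version is considerably more careful about the smoothness and constant-rank issues that justify passing from fibers to bundles.
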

\begin{proof}
	Since $N_xX\simeq T^*_xX\oplus \left( span \left(\xi_x \right)\right)^\ast $ for all $x\in M$, then  $NX\simeq T^*X \oplus span \left(\xi\right)^\ast$.
\end{proof}

We shall need the following results in the proof of the  cosymplectic relative Moser trick. 
	\begin{lemma}\label{lem1}
	Let $M$ be a smooth manifold  of dimension $(2n +1)$. Let $Q$ be a compact sub-manifold of  $M$,  $\eta_i\in \mathcal{Z}^1(M)$, and  $\omega_i\in \mathcal{Z}^2(M)$ for $i=0,1$ such that $(\omega_0,\eta_0)$ and $(\omega_1,\eta_1)$ induce two cosymplectic structures on a neighborhood of $Q$ with $\omega_0 (q) =\omega_1(q)$ and $\eta_0 (q) =\eta_1(q)$, for all $q\in Q$. Then, on a small neighborhood of $Q$, we have that 
	\begin{itemize}
		\item $I_t(X):= \iota_X\left(\omega_0 + t(\omega_1-\omega_0)\right) + 
		\left( 	\left(\eta_0 + t(\eta_1-\eta_0)\right)(X)\right) \left(\eta_0 + t(\eta_1-\eta_0)\right) $, for all vector field $X$,	is non-degenerate,  for each  $t$.
		\item  the Lie derivative, $\mathcal{L}_{\xi_t}\omega_0$ is trivial, for each $t$. 
		\item the interior derivatives $\iota_{\xi_t}\omega_0$, and $\iota_{\xi_t}\omega_1$ are trivial, for each $t$, 
	\end{itemize}	
	where  $ \xi_t: = I_t^{-1}(\eta_t)$  with $\eta_t : = \eta_0 + t(\eta_1-\eta_0)$.
\end{lemma}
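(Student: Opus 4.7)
The plan is to exploit the fact that at every point of $Q$ the two cosymplectic structures coincide, so $\omega_t=\omega_0=\omega_1$ and $\eta_t=\eta_0=\eta_1$ along $Q$ for every $t\in[0,1]$. Consequently the operator $I_t$ agrees along $Q$ with $\widetilde I_{\eta_0,\omega_0}$, and the candidate Reeb field $\xi_t=I_t^{-1}(\eta_t)$, wherever it is defined, satisfies $\xi_t(q)=\xi_0(q)$ for every $q\in Q$. This single observation will drive all three bullets.

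First I would dispose of (i), the non-degeneracy of $I_t$. Non-degeneracy of $I_t$ at a point is equivalent to the non-vanishing of its determinant in any local frame, which is an open condition depending continuously on $(t,x)\in[0,1]\times M$. Since $I_t(q)=I_0(q)$ is invertible for every $q\in Q$ by hypothesis, around each such $q$ we get a slab $U_q\times (t_q-\varepsilon_q,t_q+\varepsilon_q)$ on which $I_\cdot$ stays invertible. A standard tube-lemma compactness argument on $Q\times[0,1]$ yields a single open neighborhood $U$ of $Q$ on which $I_t$ is non-degenerate for all $t$, so that $\xi_t=I_t^{-1}(\eta_t)$ is a well-defined smooth $t$-dependent vector field on $U$.

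Next I would handle (iii). For $q\in Q$, by the opening observation $\xi_t(q)=\xi_0(q)$ is the Reeb vector of $(\omega_0,\eta_0)$, so $(\iota_{\xi_t}\omega_0)_q=\iota_{\xi_0(q)}\omega_0(q)=0$. Since $\omega_1$ also coincides with $\omega_0$ on $Q$, the same computation gives $(\iota_{\xi_t}\omega_1)_q=0$. Hence both $1$-forms $\iota_{\xi_t}\omega_0$ and $\iota_{\xi_t}\omega_1$ vanish identically along $Q$. For (ii), since $\omega_0$ is closed, Cartan's formula collapses to $\mathcal L_{\xi_t}\omega_0=d\,\iota_{\xi_t}\omega_0$; pulling back by the inclusion $i:Q\hookrightarrow U$ and using (iii) gives $i^{\ast}(\mathcal L_{\xi_t}\omega_0)=d\,i^{\ast}(\iota_{\xi_t}\omega_0)=0$, so the Lie derivative is trivial when evaluated along $Q$.

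The main delicacy, and in fact the only interpretive point that deserves care, is that (ii) and (iii) cannot be expected to hold on an entire open neighborhood of $Q$: the vector $\xi_t$ is the Reeb field of $(\omega_t,\eta_t)$, not of $\omega_0$ or $\omega_1$, and away from $Q$ the interior products $\iota_{\xi_t}\omega_0$ and $\iota_{\xi_t}\omega_1$ generally fail to vanish. The natural (and sufficient for the relative Moser argument that follows) reading is that these identities hold along $Q$, equivalently after pullback by the inclusion, which is exactly what the argument above gives; only (i) is a genuinely open statement, and that is where compactness of $Q$ is used.
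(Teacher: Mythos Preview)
Your treatment of (i) is the same open--condition plus compactness argument the paper uses, so there is nothing to add there.

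For (ii) and (iii) you take a genuinely different route. You rely on the single observation $\xi_t|_Q=\xi_0|_Q$ and read off $(\iota_{\xi_t}\omega_0)_q=(\iota_{\xi_t}\omega_1)_q=0$ for $q\in Q$ directly; for (ii) you then invoke Cartan's formula and restrict to $Q$. The paper instead argues by contradiction: it first shows $(\omega_t,\eta_t)$ is cosymplectic with Reeb field $\xi_t$ near $Q$, so that $\mathcal L_{\xi_t}\omega_t=0$ identically, and then expands $\mathcal L_{\xi_t}\omega_t=t\,\mathcal L_{\xi_t}\omega_1+(1-t)\,\mathcal L_{\xi_t}\omega_0$; assuming $\mathcal L_{\xi_s}\omega_0\neq 0$ and using that $\mathcal L_{\xi_s}\omega_0$ and $\mathcal L_{\xi_s}\omega_1$ agree along $Q$, it derives a contradiction. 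The same scheme is then claimed for (iii). Your argument is shorter and more transparent, but it only yields vanishing along $Q$ (for (ii), even only after pullback by $i:Q\hookrightarrow M$), whereas the paper asserts the conclusions on a full open neighborhood.

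Your closing caveat is the most valuable part of your write--up and is well taken: items (ii) and (iii) cannot be expected to hold on an open neighborhood in general. A simple local model already shows this. On $\mathbb R^3$ with $\omega_0=dx\wedge dy$, $\eta_0=\eta_1=dz$, $\omega_1=dx\wedge dy+z\,dx\wedge dz$, one computes $\xi_t=\partial_z - tz\,\partial_y$, hence $\iota_{\xi_t}\omega_0=tz\,dx$ and $\mathcal L_{\xi_t}\omega_0=t\,dz\wedge dx$; the first vanishes only on $\{z=0\}$ and the second is nonzero even there. So the paper's contradiction argument, as a claim on a neighborhood, overreaches; what survives---and what is actually used in the subsequent relative Moser theorem---is precisely the along--$Q$ vanishing you establish. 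One small sharpening: your step ``$i^\ast(\mathcal L_{\xi_t}\omega_0)=d\,i^\ast(\iota_{\xi_t}\omega_0)=0$'' gives only the tangential restriction of $\mathcal L_{\xi_t}\omega_0$ to $Q$, not its vanishing as a form on $T_qM$; this is consistent with the example above, and you might say so explicitly.
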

\begin{proof}
 Consider $I_t:= \iota_X\left(\omega_0 + t(\omega_1-\omega_0)\right) + 
		\left(\eta_0 + t(\eta_1-\eta_0)\right)(X)\left(\eta_0 + t(\eta_1-\eta_0)\right) $, for each  $t$  and for all vector field $X$. Since  $(\omega_0,\eta_0)$ and $(\omega_1,\eta_1)$ induce two cosymplectic structures on a neighborhood of $Q$ with $\omega_0 (q) =\omega_1(q)$ and $\eta_0 (q) =\eta_1(q)$, for all $q\in Q$, the non-degeneracy conditions of $ \widetilde  I_{\eta_0,\omega_0}$ and $ \widetilde I_{\eta_1,\omega_1}$ on the aforementioned neighborhood of $Q$ suggest  that  $I_t$
		is non-degenerate  in an open neighborhood of $Q$. Set  $ \xi_t: = I_t^{-1}(\eta_t)$, and derive  that  $ \eta_t(\xi_t)\left(1- \eta_t(\xi_t)\right) = 0$, for each $t$. Since the function $x\mapsto \eta_t(\xi_t)(x)$ is the constant function $1$ on $Q$, we can shrink the  aforementioned small neighborhood of $Q$ and assume that on this open  neighborhood the function the $x\mapsto \eta_t(\xi_t)(x)$ is positive. This implies that on such a neighborhood the only possibility for the  equation $ \eta_t(\xi_t)\left(1- \eta_t(\xi_t)\right) = 0$ to hold  is when $\eta_t(\xi_t) = 1$. Thus, the couple $(\eta_t, \omega_t)$ induces a cosymplectic structure on that neighborhood with Reeb vector field $\xi_t$.
		 To prove $\mathcal{L}_{\xi_t}\omega_0  = 0$ for each $t$, we proceed by contradiction. Assuming that there exists  $s\in ]0, 1]$ such that 
		$\mathcal{L}_{\xi_s}\omega_0  \neq 0$. In particular, since the $2-$forms $\mathcal{L}_{\xi_s}\omega_0$, and  $\mathcal{L}_{\xi_s}\omega_1$ 
		agree on $Q$,  we could have  $s\mathcal{L}_{\xi_s}\omega_1 + (1-s)\mathcal{L}_{\xi_s}\omega_0  \neq 0,$ 
		on a neighborhood  of $Q$. 
		That is, 
		\begin{equation}\label{Lie-1}
			0=  \mathcal{L}_{\xi_s}\omega_s = s\mathcal{L}_{\xi_s}\omega_1 + (1-s)\mathcal{L}_{\xi_s}\omega_0 \neq 0,
		\end{equation}
		on that neighborhood of $Q$ since $\xi_s$ is a cosymplectic vector field on a neighborhood of $Q$ \cite{Tchuiaga3}.  Formula (\ref{Lie-1})  contradicts itself.
		 For $ \omega_0(\xi_t, .) = 0 = \omega_1(\xi_t, .)$ for all $t$, we use	 similar arguments as in the proof of the second item. 

\end{proof}
\begin{theorem}[\cite{S-T-K}]\label{thm4}
	Let $M$ be a smooth compact connected manifold  of dimension $(2n +1)$, admitting two cosymplectic structures  $( \omega_0,\eta_0)$ and $(\omega_1,\eta_1)$. Assume that  $\{\eta_t\}$ (resp. $\{\omega_t\}$) is a  smooth family of closed $1-$forms (resp. closed $2-$forms) with endpoints $\eta_0$ and $\eta_1$ 
	(resp. $\omega_0$ and $\omega_1$ )  so that $(M, \omega_t,\eta_t)$ is a cosymplectic manifold for each $t$. Assume in addition that
	$\dfrac{\partial}{\partial t}[\omega_t ] =\left[\dfrac{\partial}{\partial t}\omega_t \right] =  [d\alpha_t ] = 0$, 
	 $\dfrac{\partial}{\partial t}[\eta_t ] =\left[\dfrac{\partial}{\partial t}\eta_t\right]  =[df_t] = 0$, and 
		$d(\alpha_t(\xi_t))  = 0$,  for each $t$, with $\xi_t: = \widetilde  I_{\eta_t,\omega_t}^{-1}(\eta_t)$. 
	There exists a smooth isotopy
	$\{\phi_t\}$ such that 
	  $\phi_t^\ast(\omega_t)= \omega_0$?
		 $\phi_t^\ast(\eta_t)= \eta_0 $, and 
		 $(\phi_t)_\ast(\xi_t) = \xi_0$,	for all $t\in [0,1]$.
\end{theorem}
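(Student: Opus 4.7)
The plan is to establish the theorem by a direct application of Moser's trick in the cosymplectic setting. I would seek a smooth time-dependent vector field $X_t$ on $M$ whose flow $\{\phi_t\}_{t\in[0,1]}$ satisfies $\phi_0=\mathrm{id}_M$ and delivers the required identities. Differentiating $\phi_t^\ast \omega_t = \omega_0$ and $\phi_t^\ast \eta_t = \eta_0$ in $t$, using $\frac{d}{dt}\phi_t^\ast\gamma_t = \phi_t^\ast(\mathcal{L}_{X_t}\gamma_t + \partial_t\gamma_t)$, and invoking Cartan's formula together with $d\omega_t = d\eta_t = 0$, these identities reduce to
\begin{equation*}
d(\iota_{X_t}\omega_t) = -d\alpha_t, \qquad d(\eta_t(X_t)) = -df_t,
\end{equation*}
for every $t\in[0,1]$.

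The core of the argument is the explicit construction of $X_t$ through the cosymplectic isomorphism $\widetilde I_{\eta_t,\omega_t}:\chi(M)\to \Omega^1(M)$ (well-defined in a neighborhood of each point thanks to Lemma \ref{lem1}). I would define
\begin{equation*}
X_t := \widetilde I_{\eta_t,\omega_t}^{-1}\bigl(-\alpha_t - f_t\,\eta_t\bigr),
\end{equation*}
and then read off its two components by contracting with the Reeb field $\xi_t$. Since $\iota_{\xi_t}\omega_t = 0$ and $\eta_t(\xi_t) = 1$, evaluation at $\xi_t$ produces
\begin{equation*}
\eta_t(X_t) = -\alpha_t(\xi_t) - f_t, \qquad \iota_{X_t}\omega_t = -\alpha_t + \alpha_t(\xi_t)\,\eta_t.
\end{equation*}
Applying $d$ to each and using the hypothesis $d(\alpha_t(\xi_t)) = 0$ together with $d\eta_t = 0$ gives $d(\iota_{X_t}\omega_t) = -d\alpha_t + d(\alpha_t(\xi_t))\wedge\eta_t = -d\alpha_t$ and $d(\eta_t(X_t)) = -d(\alpha_t(\xi_t)) - df_t = -df_t$, which are precisely the two Moser equations above.

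Because $M$ is compact, the time-dependent vector field $X_t$ integrates to a global isotopy $\{\phi_t\}_{t\in[0,1]}$ with $\phi_0 = \mathrm{id}_M$. Then
\begin{equation*}
\tfrac{d}{dt}\bigl(\phi_t^\ast\omega_t\bigr) = \phi_t^\ast\bigl(d\iota_{X_t}\omega_t + d\alpha_t\bigr) = 0, \qquad \tfrac{d}{dt}\bigl(\phi_t^\ast\eta_t\bigr) = \phi_t^\ast\bigl(d(\eta_t(X_t)) + df_t\bigr) = 0,
\end{equation*}
so $\phi_t^\ast\omega_t \equiv \omega_0$ and $\phi_t^\ast\eta_t \equiv \eta_0$. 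Because the Reeb field is characterized uniquely by the pair $(\omega,\eta)$, the fact that $\phi_t$ intertwines $(\omega_0,\eta_0)$ and $(\omega_t,\eta_t)$ forces it to intertwine $\xi_0$ and $\xi_t$, yielding the final assertion on Reeb fields.

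The step I expect to be the main obstacle is verifying that the naive Moser ansatz actually lands on a primitive with the right exterior derivative: in contrast with the purely symplectic case where one only needs $[\partial_t\omega_t]=0$, the cosymplectic case produces the spurious term $d(\alpha_t(\xi_t))\wedge\eta_t$ in $d(\iota_{X_t}\omega_t)$, and the hypothesis $d(\alpha_t(\xi_t))=0$ is exactly the condition that annihilates it. The same hypothesis plays a symmetric role in the equation for $\eta_t(X_t)$, simultaneously killing the cross-term $d(\alpha_t(\xi_t))$ coming from the contraction with $\xi_t$. Once this cohomological matching is secured, compactness of $M$ provides global integrability and the rest of the argument is routine.
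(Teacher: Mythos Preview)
The paper does not actually prove this theorem: it is stated with a citation to \cite{S-T-K} and then invoked as a black box in the proof of Theorem~\ref{thm01}, so there is no in-paper argument to compare against. Your Moser-trick argument is correct and is presumably close to what the cited reference does: the ansatz $X_t=\widetilde I_{\eta_t,\omega_t}^{-1}(-\alpha_t-f_t\eta_t)$ gives exactly the two Moser equations once the cross-term $d(\alpha_t(\xi_t))\wedge\eta_t$ is killed by the hypothesis, and compactness handles integrability.

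Two small remarks. First, your parenthetical appeal to Lemma~\ref{lem1} is misplaced: that lemma concerns the situation where two cosymplectic structures coincide along a submanifold $Q$, whereas here the hypothesis already grants that $(\omega_t,\eta_t)$ is cosymplectic on all of $M$ for every $t$, so $\widetilde I_{\eta_t,\omega_t}$ is globally invertible by assumption and no auxiliary lemma is needed. Second, the direction of the Reeb identity: from $\phi_t^\ast\omega_t=\omega_0$ and $\phi_t^\ast\eta_t=\eta_0$ one gets $(\phi_t)_\ast\xi_0=\xi_t$, equivalently $(\phi_t^{-1})_\ast\xi_t=\xi_0$; the formula $(\phi_t)_\ast\xi_t=\xi_0$ as printed in the statement appears to have the pushforward on the wrong side, but your reasoning (``$\phi_t$ intertwines the structures, hence the Reeb fields'') is the correct one regardless of which convention is intended.
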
																													Theorem \ref{thm4} tells us that with respect to a certain condition on Reeb vector fields, a cosymplectic structures $(\omega,\eta)$ on a compact manifolds cannot be deformed within the cohomology classes of $\omega$, and $\eta$ to an inequivalent cosymplectic structure.

\begin{theorem}	{\bf(Relative cosymplectic Moser trick)}\label{thm01} 
	Let $M$ be a smooth connected manifold  of dimension $(2n +1)$. Let $Q$ be a compact sub-manifold of  $M$, $\eta_i\in \mathcal{Z}^1(M)$, and  $\omega_i\in \mathcal{Z}^2(M)$ for $i = 0,1$ such that $(\omega_0,\eta_0)$ and $(\omega_1,\eta_1)$ induce two cosymplectic structures on a neighborhood of $Q$ with $\omega_0 (q) =\omega_1(q)$ and $\eta_0 (q) =\eta_1(q)$, for all $q\in Q$. Then, there exist open neighborhoods $\mathcal{U}_0$ and  $\mathcal{U}_1$ of $Q$ and a local diffeomorphism $\theta:\mathcal{U}_0\longrightarrow\mathcal{U}_1$  such that $\theta^\ast(\omega_1) = \omega_0$, 
	$\theta^\ast(\eta_1) = \eta_0$ and $\theta_{|Q} = id_M$.
	
\end{theorem}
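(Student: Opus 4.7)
The plan is to prove Theorem~\ref{thm01} by a relative cosymplectic Moser homotopy. Form the convex interpolations $\omega_t := \omega_0 + t(\omega_1-\omega_0)$ and $\eta_t := \eta_0 + t(\eta_1-\eta_0)$ for $t \in [0,1]$, and set $\sigma := \omega_1-\omega_0$, $\tau := \eta_1-\eta_0$. By Lemma~\ref{lem1}, on some open neighborhood $V$ of $Q$ each $(\omega_t,\eta_t)$ is a cosymplectic structure, the Reeb field $\xi_t = \widetilde{I}_{\eta_t,\omega_t}^{-1}(\eta_t)$ satisfies $\eta_t(\xi_t) = 1$, and $\iota_{\xi_t}\omega_0 = \iota_{\xi_t}\omega_1 = 0$, so in particular $\iota_{\xi_t}\sigma = 0$. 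Since $\sigma$ and $\tau$ are closed and vanish identically on $Q$, shrinking $V$ to a tubular neighborhood of $Q$ via Theorem~\ref{thm02} and applying the homotopy formula of the Remark after Example~\ref{RMK} produces primitives $\alpha \in \Omega^1(V)$ and $f \in C^\infty(V)$ with $d\alpha = \sigma$, $df = \tau$, $\alpha|_Q = 0$, and $f|_Q = 0$.

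Next, I seek a time-dependent vector field $Z_t$, vanishing along $Q$, whose flow $\theta_t$ satisfies $\theta_t^\ast\omega_t = \omega_0$ and $\theta_t^\ast\eta_t = \eta_0$. Differentiating these identities and invoking Cartan's formula together with $d\omega_t = d\eta_t = 0$ reduces the problem to
\[
d\iota_{Z_t}\omega_t = -\sigma, \qquad d\bigl(\eta_t(Z_t)\bigr) = -\tau.
\]
Exploiting the splitting $TM = \ker\eta_t \oplus \langle\xi_t\rangle$ (available thanks to $\iota_{\xi_t}\omega_t = 0$ and $\eta_t(\xi_t) = 1$), define $Z_t$ implicitly by
\[
\widetilde{I}_{\eta_t,\omega_t}(Z_t) := -\alpha - \bigl(f - \alpha(\xi_t)\bigr)\eta_t,
\]
so that a direct computation (using $\widetilde{I}(Z_t)(\xi_t) = \eta_t(Z_t)$) yields $\eta_t(Z_t) = -f$ and $\iota_{Z_t}\omega_t = -\alpha + \alpha(\xi_t)\eta_t$. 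The $\eta$-equation then holds automatically, while the $\omega$-equation reduces to the identity $d(\alpha(\xi_t))\wedge\eta_t = 0$.

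Since $\alpha|_Q = 0$ and $f|_Q = 0$, the vector field $Z_t$ vanishes along $Q$; by compactness of $Q$ it integrates to a smooth isotopy $\theta_t : \mathcal{U}_0 \to M$ on some open neighborhood $\mathcal{U}_0$ of $Q$, with $\theta_0 = \operatorname{id}$ and $\theta_t|_Q = \operatorname{id}_Q$ for all $t \in [0,1]$. The Moser identities yield $\tfrac{d}{dt}(\theta_t^\ast\omega_t) = 0$ and $\tfrac{d}{dt}(\theta_t^\ast\eta_t) = 0$, so that $\theta := \theta_1$ and $\mathcal{U}_1 := \theta_1(\mathcal{U}_0)$ do the job. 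The main obstacle is the verification of the $\omega$-equation, which hinges on $d(\alpha(\xi_t))\wedge\eta_t = 0$: this is the step where the cosymplectic setting genuinely diverges from the symplectic one. Resolving it will use $\iota_{\xi_t}\sigma = 0$ from Lemma~\ref{lem1} together with the freedom $\alpha \mapsto \alpha + dh$ to adjust $\alpha$ so that $\iota_{\xi_t}\alpha \equiv 0$ near $Q$, for instance by choosing the tubular retraction in the Remark after Example~\ref{RMK} adapted to the common Reeb foliation.
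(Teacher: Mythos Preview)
Your Moser setup, the construction of the primitives $\alpha$ and $f$ via the tubular homotopy operator, and the explicit formula for $Z_t$ are all correct and match the paper's strategy. The gap is precisely where you flag it: the verification that $d(\alpha(\xi_t))\wedge\eta_t = 0$.

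Your proposed resolution does not work. You suggest replacing $\alpha$ by $\alpha + dh$ so that $\iota_{\xi_t}\alpha \equiv 0$, possibly by adapting the tubular retraction to ``the common Reeb foliation.'' But there is no common Reeb foliation: the Reeb fields $\xi_t$ agree along $Q$ (since $(\omega_t,\eta_t)|_Q$ is independent of $t$) yet generically differ off $Q$, because they are determined by the full cosymplectic structure $(\omega_t,\eta_t)$, not just its restriction to $Q$. The condition $\alpha(\xi_t) + \xi_t(h) = 0$ is therefore a \emph{one-parameter family} of first-order PDEs on the single function $h$, and is overdetermined. No choice of retraction or gauge $dh$ will force $\alpha(\xi_t)=0$ for all $t$ simultaneously.

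The paper resolves the obstruction differently: it does not attempt to make $\alpha(\xi_t)$ vanish, but proves the weaker (and sufficient) fact $d(\alpha(\xi_t)) = 0$ for each $t$. This uses two ingredients. First, $\iota_{\xi_t}d\alpha = \iota_{\xi_t}\sigma = 0$, which follows from Lemma~\ref{lem1} (via $\mathcal{L}_{\xi_t}\omega_0 = 0$ and $\iota_{\xi_t}\omega_t = 0$). Second, $\mathcal{L}_{\xi_t}\alpha = 0$, which is obtained by a direct computation exploiting the explicit integral form of $\alpha$ coming from the homotopy operator together with the third item of Lemma~\ref{lem1}. Cartan's formula then gives $d(\alpha(\xi_t)) = \mathcal{L}_{\xi_t}\alpha - \iota_{\xi_t}d\alpha = 0$, after which the paper invokes Theorem~\ref{thm4} to produce the Moser vector field. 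The point is that the specific primitive produced by the tubular homotopy operator already satisfies the needed compatibility with every $\xi_t$; no further gauge-fixing is required.
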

\begin{proof}
	Let $\eta_0$, $\eta_1\in \mathcal{Z}^1(M)$ and $\omega_0, \omega_1\in \mathcal{Z}^2(M)$ be as in the lemma. Firstly, we want to show that there exists a neighborhood $\mathcal{U}$ of $Q$ on which we have $ \omega_1-\omega_0 = d\alpha,$ and $ \eta_1-\eta_0 = df,$ where $\alpha$ is a $1-$form on $M$, and $f$ is a smooth function on $M$.  To do so, we shall adapt the technique used to prove a similar result in the symplectic context. 
	Fix a Riemannian metric $g$ on $M$, and identify the normal bundle $\mathbf{N}Q$ of $Q$ with the orthogonal complement $Q^{\perp}$. Then, the exponential map induces a map $exp^{\bot}: \mathbf{N}Q\longrightarrow M$ which realizes a diffeomorphism on some subset 
		$\mathbf{B}_\delta :=\big\{(q, v)\in \mathbf{N}Q: \| v\|_g<  \delta\big\},$ 
		for $\delta$ sufficiently small ($exp^{\bot}$ maps the zero section on $Q$). We set $\mathcal{U} : =  exp^{\bot}(\mathbf{B}_\delta) $ (such a $\delta > 0$ exists because $Q$ is compact), and 
		for each $0\leqslant t \leqslant 1$, we define $\phi_t(exp^{\bot}((q, v))) :=  exp^{\bot}((q, tv)).$ For $t> 0$, the map $\phi_t$ realizes a diffeomorphism 
		from $\mathcal{U}$, onto its image in  $\mathcal{U}$. Moreover, we have $ \phi_1 = id$, $\phi_0(\mathcal{U}) = Q$, and $\phi_t$ restricted to $Q$ is the 
		identity of $Q$. Put
		$\tau := \omega_1 - \omega_0,$ and 
		$\sigma := \eta_1 -\eta_0,$ and for each $0< t \leqslant 1$, define
		$Y_t(\phi_t(x)) := \dfrac{d}{d t}(\phi_t(x)),$
		for all $x\in M$. Compute
		$\phi_1^\ast(\tau) -  \phi_s^\ast(\tau) = \int_s^1 \frac{d}{d u} (\phi_u^\ast(\tau))du = d\left(\int_s^1 \phi_u^\ast (\iota(Y_u)\tau) du \right) =: d\alpha_s,$
		for any $s> 0$. 	Similarly, one obtains\\ 
		$\displaystyle \phi_1^\ast(\sigma) -  \phi_s^\ast(\sigma) = \int_s^1 \frac{d}{d u} (\phi_u^\ast(\sigma))du = d\left(\int_s^1\sigma(Y_u)\circ\phi_u du \right) =: df_s.$\\
		For vector fields $X$ and $ Z$ on $\mathcal{U}$, we have\\ 
		$ \phi_s^\ast(\tau)(X,Z) = \tau(d\phi_s(X), d\phi_s(Z))\circ \phi_s  \longrightarrow \tau(d\phi_0(X), d\phi_0(Z))\circ \phi_0 = 0,$ 
		as $s\longrightarrow 0^+$ since   $\phi_0(\mathcal{U}) = Q$, and $\tau = 0$ on each $ T_qM$,  for each $q\in Q$. This implies that 
		$\tau = d\left( (\phi^{-1}_1)^\ast(\alpha_0)\right) =: d\beta,$ and similarly, one obtains $\sigma = d \left( f_0\circ\phi^{-1}_1\right).$ 
		Recall that the restriction of $\tau$ (resp. $\sigma$) to $ T_qM$ is trivial. For instance,
		set  $\eta_t := \eta_0 + t (\eta_1 - \eta_0)$, and $\omega_t := \omega_0 + t (\omega_1 - \omega_0)$ for each $t$ and derive that,
		$\frac{\partial}{\partial t}\omega_t  = d\beta \quad \text{and}\quad  \frac{\partial}{\partial t}\eta_t = \sigma.$ The equation $ \imath_{\xi_t}\omega_t = 0$, implies that $t \imath_{\xi_t}d\beta = - \mathcal{L}_{\xi_t}\omega_0 $, for each $t$. Therefore, we apply the second item of Lemma \ref{lem1} to get  $\imath_{\xi_t}d\beta = 0 $. On the other hand, let $(\rho_t)$ be the local isotopy  generated by $\xi_t$. Using the magic Lie-Cartan formula, we compute $  \rho_t^\ast\left( \mathcal{L}_{\xi_t}\beta\right)  = - \int_0^1 \tau(Y_u, d\phi_u(\xi_t))\circ\phi_u\circ\rho_t du$, and setting,  $v_t : = \left( d_0exp^{\perp}_q\right)^{-1}(\xi_t)$, for each $t$, we differentiate the relation $\phi_t(exp^{\bot}((q, v))) =  exp^{\bot}((q, tv))$ to get $d\phi_u(\xi_t) = u\xi_t$.  Hence, $  \rho_t^\ast\left( \mathcal{L}_{\xi_t}\beta\right)  = - \int_0^1 \tau(Y_u, d\phi_u(\xi_t))\circ\phi_u\circ\rho_t du =  - \int_0^1 \tau(Y_u, u\xi_t)\circ\phi_u\circ\rho_t du = 0.$ Therefore, combining the equation  $\mathcal{L}_{\xi_t}\beta = 0$ together with $\imath_{\xi_t}d\beta = 0 $, we obtain $ d(\beta(\xi_t)) = 0$, for all $t$. Thus, we have that $\dfrac{\partial}{\partial t}\omega_t  = d\beta$, 
		$\dfrac{\partial}{\partial t}\eta_t = \sigma $,  
		and  $d(\beta(\xi_t)) = 0$, for each $t$.  Theorem \ref{thm4} together with the first item of Lemma \ref{lem1} suggests that with the latter equality constraints, we can find a smooth family of vector fields $\{X_t\}_t$ defined on a small neighborhood of $Q$ such that  
		$ I_t(X_t) + \beta + (f_0\circ\phi^{-1})\eta_t = 0,$  for each $t$, and we have $X_t = 0$ on $Q$ since the restrictions of $\sigma$ and $\tau$ to $T_qQ$ are trivial for all $q\in Q$.  
	To complete the proof,  
		it will be enough to show that there exists a small neighborhood $\mathcal{U}_0$ of $Q$ containing $\mathcal{U}$, on which the
		family of diffeomorphisms $\psi_t$ induced by the ODE:  $\dot{\psi}_t = X_t\circ\psi_t,$ and $\psi_0 = id,$ is well-defined. 
		We proceed by contradiction: Assume that no such  neighborhood exists. Then, there could exist  sequences $\{x_i\}\subset \mathcal{U}$ and $\{s_i\}\subset [0, 1)$  such that: $\psi_t(x_i)$ is defined for 
		$t\in [0, s_i]$,  $\psi_{s_i}(x_i)\in \partial \mathcal{U}$ and as $i\longrightarrow \infty$, the points $x_i$ belong to the closure of $Q$. 
		Since $Q$ is compact, a subsequence  $\{q_i\}$  of $\{x_i\}$ 
		converges to $q_0\in Q$ and  a subsequence $\{t_i\}$ of $\{s_i\}$ converges to $t_0 \in [0,1]$, whereas the orbit $[0, t_i]\ni t \mapsto\psi_t(q_i)$ converges 
		to the orbit $[0, t_0]\ni t \mapsto\psi_t(q_0)$. For each  $t\in [0, t_0]$, we have that $\psi_{t}(q_0)\in \partial \mathcal{U}$, and the compactness of  $\partial \mathcal{U}$, implies that we equally have $\psi_{t_0}(q_0)\in \partial \mathcal{U}$. This is a contradiction because $X_t$ being trivial on $Q$ could impose that $\psi_t(q_0) = q_0\in Q$, for 
		each $t\in [0, t_0]$. Therefore, we take $\mathcal{U}_1 := \psi_1( \mathcal{U}_0)\subset  \mathcal{U}$, and $\theta := \psi_1$ is the desired diffeomorphism. That is, 
		$\theta: \mathcal{U}_0\longrightarrow \mathcal{U}_1\subset  \mathcal{U}$ with $\theta^\ast(\omega_1) = \omega_0$, $\theta^\ast(\eta_1) = \eta_0$, and 
		$ \theta_{|Q} = id_Q$ because $ X_t$ is identically trivial on $Q$.
\end{proof}

The following theorem shows that near Lagrangian-like submanifolds,  all the cosymplectic
manifolds of the same dimensions are almost the same.		

\begin{theorem}	{\bf(Weinstein-like tubular neighborhood theorem)}\label{Wein-00}. 
	Let  $(M, \omega, \eta)$ be a cosymplectic manifold
	of dimension $(2n + 1)$ with Reeb vector field $\xi$, and $i : X\hookrightarrow M$ a Lagrangian-like embedding in $M$. 
	Consider $T^\ast X\times \mathbb R$ with the canonical cosymplectic structure $(\omega_0, \eta_0)$ and $X$ as  $j(X)$ 
	in $T^\ast X\times \mathbb R$ with $j : X\hookrightarrow  T^\ast X\times \mathbb R, x\mapsto(x, 0_x,0)$. Then there exist
	neigbourhoods $U_0$ of $j(X)$ in $T^\ast X\times \mathbb R$, $U_1$ of $X$ in $M$ and a diffeomorphism
	$\theta : U_0 \rightarrow U_1$ such that $ \theta\circ j = i$, $\theta^\ast \eta = \eta_0$ and  $\theta^\ast \omega = \omega_0.$
	
\end{theorem}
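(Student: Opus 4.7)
The plan is to adapt the classical Weinstein tubular neighborhood argument to the cosymplectic setting by first producing a model diffeomorphism from a neighborhood of $j(X)$ in $T^\ast X \times \mathbb{R}$ onto a neighborhood of $X$ in $M$ that identifies $X$ with itself, then using the relative cosymplectic Moser trick of Theorem \ref{thm01} to straighten the pulled-back cosymplectic structure to the canonical model.

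First, I would apply Proposition \ref{thm00} to identify the normal bundle $NX$ canonically with $T^\ast X \oplus \langle \xi \rangle^\ast$, and trivialize the rank-one summand by sending the distinguished generator to $1 \in \mathbb{R}$, producing an identification $NX \cong T^\ast X \times \mathbb{R}$. Fixing a Riemannian metric on $M$ (for instance the one induced by a compatible co-complex structure provided by Lemma \ref{contract-1}), I would invoke the tubular neighborhood construction from Theorem \ref{thm02} to obtain a diffeomorphism $\Psi : \mathcal{U}_0 \to \mathcal{U}$ from a convex neighborhood $\mathcal{U}_0$ of $j(X)$ in $T^\ast X \times \mathbb{R}$ onto a neighborhood $\mathcal{U}$ of $X$ in $M$, satisfying $\Psi \circ j = i$.

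The crucial intermediate step is to arrange that the pulled-back pair $(\Psi^\ast \omega, \Psi^\ast \eta)$ agrees with the canonical pair $(\omega_0, \eta_0)$ along $j(X)$. For this I would, if necessary, post-compose $\Psi$ with a vector-bundle automorphism of $NX \cong T^\ast X \times \mathbb{R}$ lifting the identity on $X$ and chosen so that the canonical identification of Proposition \ref{thm00}, which uses the pairing $(v,u) \mapsto \omega(v,u) + \eta(u)\eta(v)$, becomes the standard one induced by $\omega_0 + \eta_0 \otimes \eta_0$ on $T^\ast X \times \mathbb{R}$. Once this linear correction is made, both cosymplectic structures $(\omega_0, \eta_0)$ and $(\Psi^\ast \omega, \Psi^\ast \eta)$ are defined on a neighborhood of $j(X)$ and coincide pointwise on $j(X)$. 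Assuming $X$ compact (or localizing to a relatively compact piece), I then apply Theorem \ref{thm01} with $Q = j(X)$, target $(\omega_0, \eta_0)$, and source $(\omega_1, \eta_1) := (\Psi^\ast \omega, \Psi^\ast \eta)$, obtaining neighborhoods $V_0, V_1$ of $j(X)$ and a diffeomorphism $\phi : V_0 \to V_1$ with $\phi^\ast \Psi^\ast \omega = \omega_0$, $\phi^\ast \Psi^\ast \eta = \eta_0$, and $\phi|_{j(X)} = \mathrm{id}$. Setting $U_0 := V_0$, $U_1 := \Psi(V_1)$ and $\theta := \Psi \circ \phi$ will then yield the desired diffeomorphism, since $\theta \circ j = \Psi \circ \phi \circ j = \Psi \circ j = i$.

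The main obstacle I expect is the linear compatibility step: verifying that Proposition \ref{thm00} provides the right identification on the nose, rather than merely up to a linear bundle automorphism, and that after this correction the pulled-back forms genuinely match $(\omega_0, \eta_0)$ along the zero section. Handling this cleanly likely requires using a compatible co-complex structure together with the Euclidean metric $\tilde g$ from Lemma \ref{contract-1} to perform a fiberwise Darboux-type normalization before Moser's argument can be invoked. A secondary technical point is the non-compact case of $X$: Theorem \ref{thm01} is stated for compact $Q$, so extension to noncompact $X$ would require a partition-of-unity refinement or an exhaustion argument.
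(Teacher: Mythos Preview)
Your proposal is correct and follows essentially the same route as the paper: identify $NX$ with $T^\ast X\times\mathbb R$ via Proposition \ref{thm00}, transport along a tubular neighborhood, linearly normalize along $X$ so the two cosymplectic structures agree pointwise there, and then apply the relative Moser trick of Theorem \ref{thm01}.

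The only substantive difference is in how the ``main obstacle'' you flag is resolved. The paper does not use a co-complex structure or a fiberwise Darboux normalization; instead it applies Proposition \ref{thm000} pointwise to produce, for each $p\in X$, a linear isomorphism $L_p:T_pU\to T_pU$ fixing $T_pX$ with $L_p^\ast(\exp^\ast\omega_p)=\omega_{0|p}$ and $L_p^\ast(\exp^\ast\eta_p)=\eta_{0|p}$, and then invokes Whitney's extension theorem (Theorem \ref{thm02}) to realize $p\mapsto L_p$ as the differential along $X$ of an actual embedding $h$. Your bundle-automorphism idea would also work, but the paper's use of Proposition \ref{thm000} is what makes the linear step canonical and clean; you may want to cite that proposition directly rather than reaching for Lemma \ref{contract-1}.
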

\begin{proof}
	Proposition \ref{thm00} implies $ NX\simeq T^*X \oplus span \left(\xi\right)^\ast\simeq  T^*X \times\mathbb R $. So,  
	we can identify the canonical cosymplectic structure $(\omega_0, \eta_0)$  as a cosymplectic structure on 
	$NX$.  By the tubular neighborhood theorem in Remark \ref{RMK}, there is a neighborhood $V$ of $X$ in $M$, a convex neighborhood $U$ of $X$ in $NX = TM/T X$, and a diffeomorphism $exp : U\rightarrow V,$ 
	determined by a Riemannian metric on $M$ such that $exp\circ j = i$. Since 
	$X$ is a  Lagrangian-like submanifold in both $TX^\ast\times \mathbb R$  and in $M$, we have 
	$exp^\ast (\omega)_{|TX} = 0 =  \omega_{0_{| TX}}  $, and $exp^\ast (\eta)_{|TX} = 0 = \eta_{0_{| TX}}$. We can use the above equalities together with Proposition \ref{thm000} to find a linear isomorphism $Lp : T_pU\rightarrow T_pU$ which is the identity map on $T_pX$, and 
	$L_p^\ast(exp^\ast (\omega_p)) = \omega_{0_{|p}}$, and $L_p^\ast(exp^\ast (\eta_p)) = \eta_{0_{|p}}$,
	that varies
	smoothly in $p\in X$. Whitney's extension theorem suggests that there
	is an embedding $ h: W \rightarrow U$, of some neighborhood $W$ of $X$ such that $h_{| X} = Id$, and $d_p h = L_p$ for every $p\in X$ : 
	$h^\ast(exp^\ast (\omega_p)) = \omega_{0_{|p}}$, and $h^\ast(exp^\ast (\eta_p)) = \eta_{0_{|p}}$. 
	At this level, we invoke  Theorem \ref{thm01}   to find a neighborhood $U_0$ of $X$ 
	in $T^\ast X\times \mathbb R$  and  a local diffeomorphism $\theta':\mathcal{U}_0\longrightarrow\mathcal{U}_1$  such that 
	$\theta'^\ast(exp^\ast (\omega) ) = \omega_0$, 
	$\theta'^\ast(exp^\ast (\eta)) = \eta_0$ and $exp\circ\theta'\circ j = i$: Take $\theta : = exp \circ \theta'$.
\end{proof}

\section{Construction of the  Weinstein-like chart}\label{sec4}

Let  $U_0$, $X$, $j$ and $U_1$ as in the above Weinstein-like tubular neighborhood theorem. Since $p :T^*X\times\mathbb R\rightarrow T^*X$ is open,  $W_0 : =  p(U_0)$ is an open subset of the zero section in $T^\ast X$, and hence 
$ W_0\times \{0\}$ is an open subset containing $j(X)$. Then, $A_0 : = \left( W_0\times \{0\} \right) \cap U_0$ is an open  subset containing $j(X)$, and $B_0 : = \theta(A_0)\subseteq U_1$  is an open  subset containing  $X$. The map $\bar \theta : = \theta_{| A_0} $ is a diffeomorphism, and  satisfies $ \bar\theta\circ j = i$, $\bar \theta^\ast \eta = \eta_0$ and  $\bar \theta^\ast \omega = \omega_0.$ In particular, since $A_0 $ is of the form $ O\times \{0\} $ where $O$ is an open neidgbourhood of the zero section in $T^\ast X$, then  the map $p\circ \bar \theta^{-1} $ induces a diffeomorphism from some  neidgbourhood $  B_0$ of $X$ in $M$ onto an open  neidgbourhood $O$ of the zero section in $T^\ast X$ with inverse map $  \bar \theta \circ f_0 $ where $f_0$ is the trivial section of the projection $p$ : compute 
\begin{equation}
	\left( \bar \theta \circ f_0\circ p\circ \bar \theta^{-1}\right)(\bar \theta(a, 0)) =  \bar \theta \circ f_0(a)  = \bar \theta (a, 0),
\end{equation}
for all $(a, 0)\in O\times \{0\},$ and  
\begin{equation}
	\left(  p\circ \bar \theta^{-1}\circ\bar \theta \circ f_0\right)  = p\circ f_0 = id_{A_0}.			
\end{equation}
If $Z(X) $ stands for the zero section in $T^\ast X$, then $ \bar \theta \circ f_{0| Z(X)} = X$ ,  $ (\bar \theta \circ f_0)^\ast (\omega) = {\omega_0}_{| O}$ and 
$ (\bar \theta \circ f_0)^\ast (\eta) = \eta_{0| \{0\}}$. $ \square$\\


\begin{remark}\label{R-1}
	Let $L_1$ and  $L_2$ be two compact  Lagrangian-like submanifolds of a cosymplectic manifold  $(M,\omega, \eta)$. Assume   $L_1$ and 
	$L_2$ are $C^1$-close, that is,  there exists a diffeomorphism $i:L_1\rightarrow L_2$ which is $C^1$-close to the inclusion map $i_0:L_1\hookrightarrow M$ of $L_1$ in $M$. The intersection $L_1 \cap L_2$ agrees with the set of 
	zero of a certain closed $1-$from on $L_1$. To see this, by 
	Theorem \ref{Wein-00}, a neighborhood  $\mathcal{U}_1$ of $L_1$ 
	in $M$ is cosymplectomorphic to a neighborhood of $j(L_1)$ in $T^*L_1\times \mathbb R$. Assume  
	$L_2$ sufficiently  $C^0$-closed to 
	$L_1$ in such a way that $L_2$ is contained in $\mathcal{U}_1$. We can suppose 
	$(M,\omega, \eta)$ is  
	$(T^*L_1\times \mathbb R,\omega_{0}, \eta_{0})$, and that $L_1$ is $j(L_1)$. If  $\theta^{-1}(L_2)$  is sufficiently $C^1$-close to $j(L_1)$ (i.e.,$\theta^{-1}(L_2)$ is a Lagrangian-like submanifold in  $T^*L_1\times \mathbb R$), then $p( \theta^{-1}(L_2))$ is the graph of a closed $1-$form  $\alpha$ on $L_1$ (by Proposition \ref{Prop0}). The zeroes of $\alpha$ are  elements of $L_1 \cap L_2$. $ \square$\\

\end{remark}

\begin{remark}\label{R-1-0}
	Let $(M, \omega, \eta)$ be a cosymplectic manifold. The set $\Delta =\{(x,x)\ \lvert \ x \in M\}$ is a  
	Lagrangian-like submanifold of  
	$(M\times M\times \mathbb R,\Omega,\eta )$, where $\Omega= \pi_1^*\omega-\pi_2^*\omega$, and $\eta : = \pi_1^*\eta-\pi_2^*\eta$.    Applying Theorem \ref{Wein-00}, we derive that there exists a neighborhood $U(\Delta)$ of 
	$\Delta$ in  $M\times M\times \mathbb R$, a neighborhood $W(j(\Delta))$ of the $j(\Delta)$ in $T^*\Delta \times \mathbb R $ and a diffeomorphism  $k:U(\Delta)\rightarrow W(j(\Delta))$
	such that $k_{\lvert \Delta}= id$, $k^*\eta_0 = \eta_{| \Delta}$ and $k^*\omega_0 = \Omega_{| \Delta}$. Consider the identifications : $\Delta \approx M$ and $T^*\Delta \approx T^*M$, and derive that 
	$k:U(\Delta)\rightarrow T^*M\times \mathbb R,\ \omega_\Delta \equiv \omega_M,\  \eta_\Delta \equiv \eta_M,\ k^*\eta_0 =\eta \ and\ k^*\omega_0=\omega.$\\ 
	In essence,  the local geometry around the diagonal $\Delta$ in $M\times M\times \mathbb R $ can be described using the familiar structure of the cotangent bundle $T^*M$ 
	augmented by $\mathbb R$. 
\end{remark}

\begin{lemma}\label{diffeo-1}
	Let $(M, \omega,\eta)$ be a closed cosymplectic manifold. Then a neighborhood $\mathcal{T}_{id_M}$ of the identity map in $\text{Cosymp}_{\eta, \omega}(M)$ can be identified with a  neighborhood  $\mathcal{S}^0_{{Z^1(M)}}$ of $0$ in the space $ Z^1(M)$ of all closed $1-$forms on $M$. 
\end{lemma}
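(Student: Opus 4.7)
The plan is to construct a Weinstein-type chart around $id_M$ by transporting cosymplectomorphisms to Lagrangian-like submanifolds of $T^*M \times \mathbb{R}$ via the graph construction and the diagonal chart $k$ from Remark \ref{R-1-0}, and then identifying these with closed $1$-forms on $M$ via Proposition \ref{Prop0}. The overall picture is the chain
\[
\phi \longleftrightarrow \Gamma_\phi \longleftrightarrow k(\Gamma_\phi) \longleftrightarrow \alpha_\phi,
\]
valid on a small enough neighborhood of $id_M$.

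\emph{Forward direction.} For $\phi \in \text{Cosymp}_{\eta,\omega}(M)$ sufficiently $C^1$-close to $id_M$, the graph $\Gamma_\phi \subset M \times M \times \mathbb{R}$ is a Lagrangian-like submanifold by the earlier graph lemma. Compactness of $M$ together with $C^1$-closeness of $\phi$ to $id_M$ ensures $\Gamma_\phi \subset U(\Delta)$. Applying the diffeomorphism $k : U(\Delta) \to W(j(\Delta)) \subset T^*M \times \mathbb{R}$ of Remark \ref{R-1-0}, we obtain a Lagrangian-like submanifold $k(\Gamma_\phi) \subset T^*M \times \mathbb{R}$ that is $C^1$-close to the zero section $j(M)$. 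By Proposition \ref{Prop0}, $p(k(\Gamma_\phi)) = \{(x, \alpha_\phi(x)) : x \in M\}$ for a unique closed $1$-form $\alpha_\phi \in Z^1(M)$, while $q(k(\Gamma_\phi))$ is countable; connectedness of $\Gamma_\phi$ forces this countable set to be a single point, and closeness to $j(M)$ pins it at $0$. We set $\mathcal{F}(\phi) := \alpha_\phi$.

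\emph{Inverse direction.} Conversely, for a closed $1$-form $\alpha$ on $M$ sufficiently close to $0$, the set $\mathrm{Gr}(\alpha) \times \{0\} \subset T^*M \times \mathbb{R}$ is Lagrangian-like and lies in $W(j(\Delta))$. Pulling back by $k^{-1}$ yields a Lagrangian-like submanifold $\Sigma_\alpha \subset U(\Delta)$ with $\Sigma_0 = \Delta$. Since $\pi_1|_\Delta$ is a diffeomorphism onto $M$, the inverse function theorem guarantees that $\pi_1|_{\Sigma_\alpha}$ remains a diffeomorphism for $\alpha$ small enough, so $\Sigma_\alpha = \Gamma_{\phi_\alpha}$ for a unique diffeomorphism $\phi_\alpha$ of $M$; the converse direction of the graph lemma then forces $\phi_\alpha$ to be a cosymplectomorphism. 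We set $\mathcal{G}(\alpha) := \phi_\alpha$.

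By construction $\mathcal{F}$ and $\mathcal{G}$ are mutual inverses on appropriate neighborhoods, and continuity between the $C^\infty$-compact-open topology on $\text{Cosymp}_{\eta,\omega}(M)$ and the induced Fréchet topology on $Z^1(M)$ follows from smoothness of $k$ and $k^{-1}$ together with the pointwise nature of the graph/section passage. The main obstacle is showing $q(k(\Gamma_\phi))$ collapses to $\{0\}$: one must use connectedness of $\Gamma_\phi$ (hence of $k(\Gamma_\phi)$) so that the countable continuous image under $q$ is a singleton, and then compare with the $\mathbb{R}$-coordinate along $\Delta$ to pin this value at $0$. Everything else amounts to a transparent transfer through the Weinstein chart.
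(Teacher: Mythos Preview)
Your argument is correct and follows essentially the same route as the paper: graph $\Gamma_\phi \subset U(\Delta)$, push through the Weinstein-like chart $k$ of Remark \ref{R-1-0}, then invoke Proposition \ref{Prop0} to extract a closed $1$-form, with the reverse direction recovering a cosymplectomorphism from a small closed form. You in fact supply more detail than the paper does—in particular the connectedness argument forcing $q(k(\Gamma_\phi))$ to collapse to $\{0\}$, and the explicit use of the inverse function theorem on $\pi_1|_{\Sigma_\alpha}$ for the inverse—both of which the paper leaves implicit.
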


\begin{proof}
	Consider $h$ to be a cosymplectomorphism that is sufficiently closed to the identity map in the  $C^1$-topology.  W.l.o.g we may assume $h$ is also closed enough to the identity map in the $C^0$-topology so that its graph $\Gamma_h $ is cantained in some open neighborhood  $ U(\Delta)$ of $ \Delta$  as in Theorem \ref{Wein-00}. Then,  $k(\Gamma_h)$ is a Lagrangian-like submanifold of  $T^*M\times \mathbb R$, hence $p\left( k(\Gamma_h)\right) $ is the  graph of a closed $1$-form denoted $\mathcal W_L(h)$ (by Proposition \ref{Prop0}). The correspondence $W_L:h\mapsto \mathcal W_L(h)$ is smooth from a neighborhood  $\mathcal{T}_{id_M}$ of the identity map in 
	$ \text{Cosymp}_{\eta, \omega}(M) $ into a neighborhood  $\mathcal{S}^0_{{Z^1(M)}}$ of zero  in the space of all closed $1-$forms.
	Arguing in the reverse direction, we can generate a small element in $ \text{Cosymp}_{\eta, \omega}(M) $  via any closed $1-$form sufficiently closed to zero in $ Z^1(M)$.
\end{proof}	
Lemma \ref{diffeo-1} is showcasing how small perturbations in the space of forms can induce smooth transformations in the manifold.														
\subsection{{\bf Fixed points of a cosymplectomorphism closed to the identity map}}

The fixed points of  a cosymplectomorphism  $\varphi$ are points in $\Gamma(\varphi) \cap \Delta$.

\begin{proposition}\label{fix-0}
	Let $(M,\omega, \eta)$ be a compact cosymplectic manifold with $H^1(M,\mathbb{R})=0$. Then, any cosymplectomorphism  $f:M\rightarrow M$  which is sufficiently 
	$C^1$-close to the identity map has at least two fixed points.
\end{proposition}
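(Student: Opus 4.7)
The plan is to translate the fixed-point problem into a problem about zeros of a closed $1$-form and then use $H^1(M,\mathbb{R})=0$ to upgrade this to a critical-point problem for a smooth function on the compact manifold $M$.

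First I would invoke Lemma \ref{diffeo-1}: if $f$ is sufficiently $C^1$-close to $\mathrm{id}_M$, then $f$ lies in a small neighborhood $\mathcal{T}_{\mathrm{id}_M}$ of the identity in $\text{Cosymp}_{\eta,\omega}(M)$ on which the map $W_L$ is defined, producing a closed $1$-form $\mathcal{W}_L(f)\in Z^1(M)$. The construction comes from pushing the graph $\Gamma_f\subset M\times M\times\mathbb{R}$ through the Weinstein-like chart $k$ of Remark \ref{R-1-0} to obtain a Lagrangian-like submanifold in $T^\ast M\times \mathbb{R}$, then applying Proposition \ref{Prop0} to identify its projection with the graph of $\mathcal{W}_L(f)$ over the zero section.

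Next I would identify the fixed points of $f$ with the zeros of $\mathcal{W}_L(f)$. Indeed, $p\in M$ is fixed by $f$ iff $(p,p)\in\Gamma_f\cap\Delta$; under the chart $k$ the diagonal $\Delta$ goes to the zero section $j(M)\subset T^\ast M\times\mathbb{R}$, and $\Gamma_f$ goes (after applying the projection $p$) to $\{(x,\mathcal{W}_L(f)_x):x\in M\}$. Hence $\Gamma_f\cap\Delta$ is in bijection with the zero set of $\mathcal{W}_L(f)$, provided $f$ is $C^1$-close enough to the identity that the entire graph sits inside the chart domain and corresponds to a genuine closed $1$-form on $M$ (not some larger Lagrangian-like subset with countable second factor).

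Now I would use the cohomological hypothesis: since $H^1(M,\mathbb{R})=0$ and $\mathcal{W}_L(f)$ is closed, there exists a smooth function $F\in C^\infty(M,\mathbb{R})$ with $\mathcal{W}_L(f)=dF$. Compactness of $M$ then forces $F$ to attain both its maximum and its minimum, each of which is a critical point, hence a zero of $dF=\mathcal{W}_L(f)$, hence a fixed point of $f$. If $F$ is non-constant, max and min are distinct and we get at least two fixed points; if $F$ is constant, then $\mathcal{W}_L(f)\equiv 0$, which (by the inverse direction of Lemma \ref{diffeo-1}) forces $f=\mathrm{id}_M$, so every point is fixed and the conclusion is trivial.

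The main obstacle I expect is the careful verification that the Weinstein-like chart from Remark \ref{R-1-0} really sends the intersection $\Gamma_f\cap\Delta$ exactly to the zero set of $\mathcal{W}_L(f)$, with no spurious or missing points; this requires checking that, for $f$ sufficiently $C^1$-close to $\mathrm{id}_M$, the image $k(\Gamma_f)$ is indeed a section-type Lagrangian-like submanifold whose $\mathbb{R}$-projection is a single point (namely $0$), so that Proposition \ref{Prop0} applies cleanly. The rest of the argument, being classical Morse-theoretic on the compact manifold $M$, is essentially immediate once this identification is in place.
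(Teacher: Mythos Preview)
Your proposal is correct and follows essentially the same route as the paper's proof: identify the fixed points of $f$ with the intersection $\Gamma_f\cap\Delta$, use the Weinstein-like tubular neighborhood (via Remark~\ref{R-1}/Remark~\ref{R-1-0}/Lemma~\ref{diffeo-1}) to convert this into the zero set of a closed $1$-form on $M\cong\Delta$, then invoke $H^1(M,\mathbb{R})=0$ and compactness to find at least two critical points of a primitive. Your treatment is in fact slightly more careful than the paper's, since you explicitly dispose of the degenerate case where the primitive is constant (so that maximum and minimum coincide).
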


\noindent \textbf{Proof}.
Assume   $L_1$ and $L_2$ to be the diagonal and the  graph of  
$f$ respectively in $M\times M\times \mathbb R$. If $f$ is sufficiently $C^1$-close to the identity map, then as seen in Remark \ref{R-1}, there is a closed $1-$form  $\alpha$ on  $L_1$ whose zeroes are the elements of  $L_1 \cap L_2$; and hence the fixed points of 
$f$. Since $M$ is homeomorphic to $L_1$, we derive that $H^1(L_1,\mathbb{R})=0$, so $\alpha =dh$ for a smooth function $h$ on $L_1$. Since  $h$ attains its bounds on $L_1$,   $dh=\alpha$ has at least two critical points due to the minimum and maximum value theorem. $\square$\\

Proposition \ref{fix-0}  demonstrates that the fixed points of a map 
$f : M \longrightarrow M$   
can be found by analyzing the critical points of the smooth function 
$h$ 
on $L_1$.

\subsection{The co-flux homomorphism}
Let  $(M, \omega, \eta)$ be a compact cosymplectic manifold
of dimension $2n + 1$ with Reeb vector field $\xi$.  Following \cite{Tchuiaga3}, for each closed $1-$from $\alpha$  we don't know in general if  $X_\alpha :=  \widetilde I_{\eta, \omega}^{-1}(\alpha)$ is a cosymplectic vector field.\\ Set 
$  \mathfrak{Z}^1_\xi(M) : =\{ \alpha  \in \mathcal{Z}^1(M) : \alpha(\xi) = cte\},$
where $\xi$ is the Reeb vector field :   $\mathfrak X_{\eta, \omega}(M)$ is isomorphic to $\mathfrak{Z}^1_\xi(M) $.
Set,
\begin{equation}
	\mathbb	H^1_{Reeb}(M, \mathbb R) : = \mathfrak{Z}^1_\xi(M)/ Im(d : \mathcal{C}^0_{Reeb}(M)\longrightarrow\mathfrak{Z}^1_\xi(M) ),
\end{equation}
where $ \mathcal{C}^0_{Reeb}(M) : = \{ f\in C^{\infty}(M): \xi(f)= Cte \}.$  	 We have a well-defined surjective groups homomorphism \cite{Tchuiaga3},
\begin{equation}
	\widetilde S_{\eta,\omega}: Iso_{\eta, \omega}(M) \longrightarrow  \mathbb H_{Reeb}^{1}(M,\mathbb{R}),
	(\varphi_{t})\mapsto [\int_{0}^{1}\varphi_{t}^{*}(\widetilde{I}_{\eta, \omega}(\dot{\varphi_{t}})) dt] .
\end{equation}

\subsection*{Canonical isotopy}
Let $h$ be a cosymplectomorphism in the domain $\mathfrak D$ of the  Weinstein-like chart. 	We claim that there is a "small" isotopy contained in $\mathfrak D$ called canonical isotopy, and denoted 
$\{h_t^c\}$.  Proof of the claim: Set $ 	h_t^c : = W_L^{-1}(t\mathcal W_L(h)), $ 
$ \forall \ t\in [0,1]$.\\

We have the following fact. 
\begin{theorem}\label{Flux-W}
	Let  $(M, \omega, \eta)$ be a compact cosymplectic manifold with Reeb vector field $\xi$. 
	Let $h\in  \text{Cosymp}_{\eta, \omega}(M)$ be sufficiently  $C^1$-close to the identity map, and let $\{h_t^c\} $ be its corresponding canonical isotopy. 
	Then,  
	$[\mathcal W_L(h)] = \widetilde S_{\eta,\omega}(\{h_t^c\})$,and 	
	$\mathcal W_L(h)(\xi) = 0$.

\end{theorem}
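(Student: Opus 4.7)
The plan is to identify $\mathcal W_L(h)$ with a co-flux representative by integrating a derivative formula along the canonical isotopy. Let $X_t$ denote the time-dependent cosymplectic vector field generating $\{h_t^c\}$, so that $\dot h_t^c = X_t\circ h_t^c$, $h_0^c=\mathrm{id}$, and $h_1^c=h$. By the very definition $h_t^c = W_L^{-1}(t\mathcal W_L(h))$, we have $\mathcal W_L(h_t^c)=t\mathcal W_L(h)$, hence the trivial identity $\frac{d}{dt}\mathcal W_L(h_t^c)=\mathcal W_L(h)$ for every $t\in[0,1]$. The strategy is to compute this same derivative in a second way, through the Weinstein-like chart, and compare.

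For the main computation, I will parametrize $\Gamma_{h_t^c}$ by $\psi_t(x)=(x,h_t^c(x),0)$, push through the chart $k$, and use Proposition \ref{Prop0} together with the argument of Lemma \ref{diffeo-1} so that $k(\Gamma_{h_t^c})\subset T^*M\times\{0\}$ is identified with the graph of $\mathcal W_L(h_t^c)$. Pulling back the Liouville form $\lambda$ gives the identity $\psi_t^*k^*p^*\lambda=y_t^*\mathcal W_L(h_t^c)$ for the induced base diffeomorphism $y_t:M\to M$. Differentiating in $t$ via Cartan's magic formula and invoking $k^*(\omega_0,\eta_0)=(\Omega,\eta_\Delta)$, I expect to obtain, modulo $d\mathcal C^0_{\mathrm{Reeb}}(M)$,
\[
\frac{d}{dt}\mathcal W_L(h_t^c)\equiv (h_t^c)^*\widetilde I_{\eta,\omega}(X_t),
\]
with the exact remainder forced to lie in $d\mathcal C^0_{\mathrm{Reeb}}(M)$ by the cosymplectic constraint $\eta(X_t)\equiv\mathrm{const}$. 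Integrating over $[0,1]$ and comparing with $\frac{d}{dt}\mathcal W_L(h_t^c)=\mathcal W_L(h)$ yields $[\mathcal W_L(h)]=\widetilde S_{\eta,\omega}(\{h_t^c\})$ in $\mathbb H^1_{\mathrm{Reeb}}(M,\mathbb R)$.

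For the second claim $\mathcal W_L(h)(\xi)=0$, I will evaluate the integrated identity on $\xi$. Since every $h_t^c$ is a cosymplectomorphism, $dh_t^c(\xi)=\xi$, so that $(h_t^c)^*\widetilde I_{\eta,\omega}(X_t)(\xi)=\eta(X_t)\circ h_t^c=c_t$ is a spatially constant function depending only on $t$. Hence $\mathcal W_L(h)(\xi)=\int_0^1 c_t\,dt+\xi(F)$ for a primitive $F\in\mathcal C^0_{\mathrm{Reeb}}(M)$, which is itself constant. To pin this constant to zero, I will unpack the Weinstein chart infinitesimally: because $k$ maps $\Gamma_{h_t^c}$ into the hyperplane $T^*M\times\{0\}$ for every $t\in[0,1]$, the $\mathbb R$-coordinate of the image vanishes along the isotopy; differentiating this vanishing in $t$ forces $\eta(X_t)=0$, i.e. $c_t=0$, and a symmetric choice of primitive then gives $\xi(F)=0$, yielding $\mathcal W_L(h)(\xi)=0$.

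The principal obstacle is the derivative formula in the second paragraph: controlling the exact remainder so that it lies precisely in $d\mathcal C^0_{\mathrm{Reeb}}(M)$, rather than merely in $d\mathcal C^\infty(M)$, requires careful Lie-derivative bookkeeping through the Weinstein chart and a direct invocation of the cosymplectic character of $X_t$. Once this normalization is secured, the vanishing on $\xi$ becomes the transparent consequence of the $s=0$ embedding combined with the Reeb-preservation property $dh_t^c(\xi)=\xi$.
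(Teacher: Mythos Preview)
Your overall architecture—embed the graph, push through the Weinstein chart $k$, differentiate via Cartan, integrate—is exactly the paper's route. However, two of your key expectations do not match what the computation actually yields, and in both places the paper supplies an ingredient you are missing.

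\textbf{The derivative formula.} You anticipate
\[
\frac{d}{dt}\mathcal W_L(h_t^c)\equiv (h_t^c)^*\widetilde I_{\eta,\omega}(X_t)\pmod{d\mathcal C^0_{\mathrm{Reeb}}(M)},
\]
i.e.\ the full form $\iota_{X_t}\omega+\eta(X_t)\eta$. But in the paper's calculation the primitive is $\beta:=k^*(p^*\lambda)$, so $d\beta=k^*(p^*d\lambda)=\pi_2^*\omega-\pi_1^*\omega$: the $\eta$-part is simply absent, and one obtains only
\[
\mu_t^*(\iota_{\overline{X_t}}d\beta)=(h_t^c)^*(\iota_{X_t}\omega).
\]
Thus after integration you get $\mathcal W_L(h)=\int_0^1(h_t^c)^*(\iota_{\dot h_t^c}\omega)\,dt+d(\cdots)$, which differs from the co-flux representative by the term $\bigl(\int_0^1\eta(\dot h_t^c)\circ h_t^c\,dt\bigr)\eta$. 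The paper disposes of this extra term via a separate proposition (its Proposition~\ref{Can-1}): since $\eta(\dot h_t^c)$ is constant and $|\int_0^1\eta(\dot h_t^c)\circ h_t^c\,dt|$ is controlled by $d_{C^0}(\{h_t^c\},\mathrm{Id})$, it must vanish for the canonical isotopy. You have no analogue of this step, and your hoped-for normalization of the exact remainder cannot manufacture the missing $\eta$-contribution. (Incidentally, your ``principal obstacle'' about $d\mathcal C^0_{\mathrm{Reeb}}$ versus $d\mathcal C^\infty$ dissolves once both sides lie in $\mathfrak Z^1_\xi(M)$: if $dF\in\mathfrak Z^1_\xi(M)$ then $\xi(F)$ is constant, so $F\in\mathcal C^0_{\mathrm{Reeb}}$ automatically.)

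\textbf{The Reeb vanishing.} Your argument that $\eta(X_t)=0$ rests on the assertion that $k(\Gamma_{h_t^c})\subset T^*M\times\{0\}$ for all $t$, so that ``differentiating the $\mathbb R$-coordinate'' gives the claim. But the Weinstein-like chart in Lemma~\ref{diffeo-1} only controls $p\circ k$ (the $T^*M$-component); Proposition~\ref{Prop0} tells you that $q(k(\Gamma_{h_t^c}))$ is countable, not that it equals $\{0\}$. So this step is unjustified. The paper's argument is instead a one-liner from the identity $\mathcal W_L(h)=(p\circ k\circ\mu)^*\lambda$, reading $\mathcal W_L(h)(\xi)=0$ directly off the Liouville representation.
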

Theorem \ref{Flux-W}  ties the geometry of the cosymplectic manifold and its diffeomorphisms to the algebraic structure of closed $1-$forms, providing a deeper understanding of their interactions. This condition ensures that the $1-$form 
	$\mathcal W_L(h)(\xi) = 0$
is compatible with the cosymplectic structure of $M$. 
The proof of Theorem \ref{Flux-W} uses the following proposition.

\begin{proposition}\label{Can-1}
	Let $(M, \omega, \eta)$ be a compact connected cosymplectic manifold. If 		$\{h_t\}$ is a cosymplectic isotopy sufficiently closed to the identity map in the $C^0-$metric, then $ \int_0^1\eta(\dot{h}_t)\circ h_tdt  = 0$.										    																											    																	\end{proposition}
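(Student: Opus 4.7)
My plan is to reduce the claim to a scalar identity using the cosymplectic invariance of $\eta$, and then extract that identity from the Weinstein-like chart together with a Stokes argument. Write $X_t$ for the time-dependent vector field generating $\{h_t\}$, so that $\dot h_t=X_t\circ h_t$. Differentiating $h_t^*\eta=\eta$ in $t$ gives $\mathcal L_{X_t}\eta=0$, and because $d\eta=0$, Cartan's formula collapses this to $d(\eta(X_t))=0$. By connectedness of $M$ the function $\eta(X_t)$ is a constant $c_t\in\mathbb R$; since pull-back fixes constants we obtain $\eta(\dot h_t)\circ h_t=c_t$, so the proposition reduces to the scalar identity $\int_0^1 c_t\,dt=0$.

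Next I would feed the isotopy into the Weinstein-like chart of Lemma \ref{diffeo-1}. Under the $C^0$-closeness hypothesis every $h_t$ sits in the chart domain $\mathcal T_{id_M}$, and $\alpha_t:=\mathcal W_L(h_t)\in Z^1(M)$ is a smooth curve of closed $1$-forms with $\alpha_0=0$. Because $\Gamma_{h_t}\subset M\times M\times\mathbb R$ lies in the zero slice of the $\mathbb R$-factor, the chart $k$ of Remark \ref{R-1-0} sends $\Gamma_{h_t}$ into the zero slice of $T^*M\times\mathbb R$; unwinding this translates into the identity $\alpha_t(\xi)\equiv 0$, and hence $\dot\alpha_t(\xi)\equiv 0$. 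A Moser-style identification carried out inside the chart, patterned on Theorem \ref{thm01}, supplies a smooth family of functions $f_t\in C^\infty(M)$ with
\begin{equation*}
\dot\alpha_t=h_t^*\bigl(\widetilde I_{\eta,\omega}(X_t)\bigr)+df_t.
\end{equation*}
Using $h_{t*}\xi=\xi$ (cosymplectomorphisms preserve the Reeb vector field), evaluation at $\xi$ produces $0=\dot\alpha_t(\xi)=c_t+\xi(f_t)$, so $\xi(f_t)=-c_t$.

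The closing step is a Stokes collapse. Set $F:=\int_0^1 f_t\,dt$; then $\xi(F)=-\int_0^1 c_t\,dt$ is a constant function on $M$. Because $\xi$ preserves the volume form $\mu:=\eta\wedge\omega^n$, we have $\xi(F)\,\mu=\mathcal L_\xi(F\mu)=d\iota_\xi(F\mu)$, and Stokes' theorem on the closed manifold $M$ gives $\int_M\xi(F)\,\mu=0$. Since $\xi(F)$ is constant, this forces $\int_0^1 c_t\,dt=0$, as required. The main technical obstacle I anticipate is the Moser-type identification of $\dot\alpha_t$ with $h_t^*\widetilde I_{\eta,\omega}(X_t)$ modulo an exact $1$-form, together with the Reeb-suppression property $\alpha_t(\xi)\equiv 0$ reflecting that the Weinstein chart projects into $T^*M\times\{0\}$ rather than all of $T^*M\times\mathbb R$; the primitives $f_t$ must also be chosen to depend smoothly on $t$ so that the final Reeb-derivative collapse makes sense. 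Once those are in place, the Stokes-type vanishing on a closed manifold is routine.
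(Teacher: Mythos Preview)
Your argument contains a genuine gap that cannot be repaired along the lines you sketch. Suppose both of your unproved inputs held: the Reeb-suppression $\alpha_t(\xi)\equiv 0$ (hence $\dot\alpha_t(\xi)=0$) and the Moser-type identity $\dot\alpha_t=h_t^*\bigl(\widetilde I_{\eta,\omega}(X_t)\bigr)+df_t$. Evaluating at $\xi$ and using $h_{t*}\xi=\xi$, $\iota_\xi\omega=0$, $\eta(\xi)=1$ gives $0=c_t+\xi(f_t)$. But then your own Stokes step, applied at each fixed $t$, already forces $c_t=0$ for \emph{every} $t$, not merely $\int_0^1 c_t\,dt=0$. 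That conclusion is false: a short-time reparametrised Reeb flow $h_t=\phi^{\xi}_{t\epsilon}$ is a cosymplectic isotopy arbitrarily $C^0$-close to the identity with $c_t\equiv\epsilon\neq 0$. Hence at least one of the two inputs must fail. In fact the computation carried out in the proof of Theorem~\ref{Flux-W} shows that the correct infinitesimal relation coming from the Weinstein-like chart involves only $h_t^*(\iota_{X_t}\omega)$, not the full $\widetilde I_{\eta,\omega}(X_t)$; the missing piece $\eta(X_t)\eta=c_t\eta$ is exactly what Proposition~\ref{Can-1} is meant to control, so the chart cannot supply it. Your ``zero-slice'' justification for $\alpha_t(\xi)=0$ is also unfounded: the diffeomorphism $k$ of Remark~\ref{R-1-0} is produced by a tubular neighbourhood plus Moser argument and has no reason to preserve the $\mathbb R$-slicing.

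There is also a structural circularity: in the paper, the Weinstein-like chart conclusions you invoke (Theorem~\ref{Flux-W}) are proved \emph{using} Proposition~\ref{Can-1}. The paper's own argument is entirely different and avoids the chart: it first notes (as you do) that $\int_0^1\eta(\dot h_t)\circ h_t\,dt$ is a constant function, and then invokes an external estimate (Lemma~3.10-(2) of \cite{S-T-3}) bounding this constant by $d_{C^0}(\{h_t\},Id)$, from which the vanishing under the $C^0$-smallness hypothesis is deduced.
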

\begin{proof}
	Since $\{h_t\}$ is a cosymplectic isotopy,  the function 
	$x\mapsto \left( \int_0^1\eta(\dot{h}_t)\circ h_tdt\right) (x)$ is constant \cite{Tchuiaga3}. Invoke Lemma 3.10-(2) found in \cite{S-T-3} with respect to the constant path identity and $\{h_t\}$ to derive that $| \int_0^1\eta(\dot{h}_t)\circ h_tdt|$ is uniformly controlled from above by $d_{C^0}(\{h_t\},Id) $.
	Since $d_{C^0}(\{h_t\},Id) $ can be considered as sufficiently small as we want, and the constant function $x\mapsto \left( \int_0^1\eta(\dot{h}_t)\circ h_tdt\right) (x)$  must necessarily be trivial.
\end{proof}				
\subsection*{Proof of Theorem \ref{Flux-W}}
Assume $h\in  \text{Cosymp}_{\eta, \omega}(M)$ to be sufficiently closed to the identity map in the  $C^1$-topology. 
First item. Consider the map  $\mu:M\rightarrow M\times M\times \mathbb R,\ x\mapsto (x,h(x), 0 )$, which is the embedding of the graph  $\Gamma(h)$ of $h$ in $M\times M\times \mathbb R$. By 
Theorem \ref{Wein-00}, consider a neighborhood
$U(\Delta)$ of $\Delta\times \{0\}$ in  $M\times M\times \mathbb R$, and derive the following diagram 
$M\stackrel{\mu}{\rightarrow} U(\Delta) \stackrel{k}{\rightarrow} T^*M\times \mathbb R,$ and hence  $ k\circ \mu:M \rightarrow T^*M\times \mathbb R .$ Since   $(k\circ \mu)(M)=
k(\Gamma(h)\times \{0\})$, the graph of the $1$-form $p\circ k\circ \mu$ coincides with that of the Weinstein-like $1-$form $W_L(h)$, where $p :T^*M\times\mathbb R\rightarrow T^*M$. This yields  $W_L(h)= p\circ k\circ \mu$. By the universal property of the Liouville  $1$-form  $\lambda$ on $T^*M$, we have 
\begin{equation}
	W_L(h) = \left( p\circ k\circ \mu\right)^\ast \lambda =  \mu^*\beta
\end{equation}
where  $\beta =k^*\left( p^\ast\lambda\right) $. Define a map 
$\mu_t:M\rightarrow M\times M\times\mathbb R,\ x\mapsto \mu_t(x)=(x,h_t^c(x), 0)$ : we have 
$ \mu_t\left( M\right)   \subset U(\Delta)$ et $\mu_0\left( M\right)  = \Delta\times \{0\}$ because $\mu_t(M)=\Gamma(h_t^c) \times \{0\}\subset U(\Delta)$ and  $\mu_0:x\rightarrow(x,x, 0)$. Since 
$\mu_0(M) =  \Delta\times \{0\}$ and  $\beta$ vanishes on $ \Delta\times \{0\}$, we have 
\begin{equation}
	W_L(h)=\mu^*_1 \beta - 0=\mu_1^*\beta-\mu_0^*\beta=\int_0^1\frac{d}{dt}(\mu_t^*
	\beta)dt.						
\end{equation}
This implies  
\begin{equation}\label{W-1}
	W_L(h)=\int_0^1\mu_t^*(i_{\overline{X_t}}d\beta)dt+
	d(\int_0^1\mu_t^*(i_{\overline{X_t}}\beta)dt),
\end{equation}	
where $\overline{X_t}$ is the tangent vector field along $\mu_t$.  On the other hand,  for each 
$x\in M$, the tangent space to  $M\times M\times \mathbb R$ at $(x,h_t^c(x), 0)$  decomposes as   
\begin{equation}
	T_{(x,h_t^c(x), 0)}(M\times M\times \mathbb R) = T_xM \oplus T_{h_t^c(x)}M\oplus\mathbb R ,	
\end{equation}
while the linear tangent map 
$T_x\mu_t : T_xM\rightarrow  T_xM \oplus T_{h_t^c(x)}M\oplus\mathbb R $ is defined by  
\begin{equation}
	T_x\mu_t(Y_x) =  T_xid_M\left( Y_x\right)  + T_xh_t^c\left( Y_x \right) +  0 =  Y_x  +  T_xh_t^c\left( Y_x\right) ,			
\end{equation}

$\forall Y_x \in T_xM$. If $X_t$ is the tangent vector field along $h_t^c$, then  the following decomposition holds 
$\overline{X_t}(x)=(0 + X_t(x) + 0)\in  T_xM \oplus T_{h_t^c(x)}M\oplus\mathbb R.$\\ Compute, $d\beta= k^*(p^\ast d\lambda) =\pi_2^*\omega-\pi_1^*\omega,$ on $U(\Delta)$ where $\pi_i : M_1\times M_2\times \mathbb  R\rightarrow M_i$, for $i =1,2$. Thus, for  $x\in M$ and  $Y_x \in T_xM$, we have 
\begin{eqnarray}\begin{array}{cclccccccc}
		(\mu_t^*(i_{\overline{X_t}}d\beta))(x)(Y_x)   &= &   (d\beta)_{\mu_t(x)}(\overline{X_t}(x),T_x\mu_t.Y_x), \nonumber\\
		&=&   -(\pi_1^*\omega-\pi_2^*\omega)(\mu_t(x))(\overline{X_t}(x),T_x\mu_t.Y_x), \nonumber \\
		&=& -(\pi_1^*\omega-\pi_2^*\omega)_{(x,h_t^c(x), 0)}(0 + X_t(x) + 0,Y_x +  (T_xh_t^c).Y_x),  \nonumber \\
		&=&    -\omega_x((\pi_1)_*(0 + X_t(x) + 0),(\pi_1)_*(Y_x + (T_xh_t^c).Y_x))  )\nonumber \\
		&+&\omega_{h_t^c(x)}((\pi_2)_*(0 + X_t(x) + 0),(\pi_2)_*(Y_x +  
		(T_xh_t^c).Y_x)), \nonumber \\
		& = &  -\omega_x(0,Y_x)+\omega_{h_t^c(x)}(X_t(x),(T_xh_t^c).Y_x)  , \nonumber \\
		& = &   \omega_{h_t^c(x)}(X_t(x),(T_xh_t^c).Y_x),\nonumber \\ 
		& = & (h_t^c)^*(i_{X_t}\omega)_x(Y_x).
\end{array}\end{eqnarray}
Since $(h_t^c)^*(i_{X_t}\omega)=(h_t^c)^*(i_{\stackrel{.}{h_t^c}}\omega)$, we then derive that  
\begin{equation}\label{W-2}
	(\mu_t^*(i_{\overline{X_t}}d\beta))(x)(Y_x) = (h_t^c)^*(i_{X_t}\omega)_x(Y_x) = (h_t^c)^*(i_{\stackrel{.}{h_t^c}}\omega)_x(Y_x).
\end{equation}
Thus, (\ref{W-1}) and  (\ref{W-2}) implies $
	W_L(h)=\int_0^1(h_t^c)^*(i_{\stackrel{.}{h_t^c}}\omega)dt + d(\int_0^1\mu_t^\ast(i_{\overline{X_t}}\beta)dt).
$
Hence,  $ \widetilde S_{\eta,\omega}(\{h^c_t\})= 	[ W_L(h)] + \left( \int_0^1\eta(\dot{h}_t^c)\circ h_t^c dt \right) [\eta] = 	[ W_L(h)]  ,$  by Proposition \ref{Can-1}. 
The second item follows from the formula $  W_L(h) = \left( p\circ k\circ \mu\right)^\ast \lambda$. $\square$\\


	

\end{document}